\def\chv{\widecheck{v}}
\def\chz{\widecheck{z}}
\def\chmfa{\widecheck{\mfa}}
\def\htM{\widehat M}
\def\htU{\widehat U}
\def\htx{\hat x}
\def\htv{\hat v}
\def\bvrp{\overline \vrp}
\def\vrp{\varphi}
\def\m{m}
\newcommand{\pusho}[1]{{#1}_{*}}
\newcommand{\tpusho}[1]{{#1}_{* , \tng}}
\newcommand{\ds}[1]{\ {#1} \ }
\def\iso{\tilde{\to}}
\def\sm{\setminus}
\def\AEi{\operatorname{AE1}}
\def\AEii{\operatorname{AE2}}
\def\AEiii{\operatorname{AE3}}
\def\tCov{\Cov_\tng}
\def\tsCov{\Cov_{\tng,\stg}}
\def\tcaCov{\Cov_{\tng, c,  \ga}}
\def\tcpCov{\Cov_{\tng, c,  \gp}}
\def\tng{{\operatorname{t}}}
\def\stg{{\operatorname{s}}}
\newcommand{\etype}[1]{\renewcommand{\labelenumi}{(#1{enumi})}}
\def\gq{\mathfrak q}
\def\gp{\mathfrak p}
\def\go{\mathfrak o}
\def\ga{\mathfrak a}
\def\tT{\mathcal T}
\def\tG{\mathcal G}
\def\tE{E_\tng}
\def\taE{E_{\tng} (\ga) }
\def\MFCE{{MFCE}}
\def\tac{c_{\tng,\ga}}
\def\tpc{c_{\tng,\gp}}
\def\zset{\{ 0 \}}
\def\smin{\setminus}
\def\pogm{\pusho{\gm}}
\def\tpogm{\tpusho{\gm}}
\def\tapi{\pi_{\tng,\ga}}
\def\tppi{\pi_{\tng,\gp}}
\def\taUpi{\pi_{\tng,\ga,U}}
\def\zz{Z}
\def\endbox{ \hfill\quad\qed}
\def\eroman{\etype{\roman}}
\def\ealph{\etype{\alph}}
\def\pSkip{\vskip 1.5mm \noindent}
\def\sat{\operatorname{sat}}
\def\N{\mathbb N}
\def\mfD{\mathfrak D}
\def\mfA{\mathfrak A}
\def\mfq{\mathfrak q}
\def\mfp{\mathfrak p}
\def\mfa{\mathfrak a}
\def\mfb{\mathfrak b}
\def\mfc{\mathfrak c}
\def\olM{\overline M}
\def\olU{\overline U}
\def\olE{\overline E}
\def\olF{\overline F}
\def\al{\alpha}
\def\bt{\beta}
\def\gm{\gamma}
\def\Gm{\Gamma}
\def\mm{\frak m}
\def\tv{\tilde{v}}
\def\ga{\frak a}
\newtheorem{thm}{Theorem} [section]
\newtheorem*{thm*}{Theorem}
\newtheorem{cor}[thm]{Corollary}
\newtheorem{lem}[thm]{Lemma}
\newtheorem{lemma}[thm]{Lemma}
\newtheorem{prop}[thm]{Proposition}
\newtheorem*{claim*} {Claim}
\newtheorem*{theorem6.6'} {Theorem 6.6$'$}
\newtheorem{acknowledgment*}[thm] {Acknowledgment}
\newtheorem{ques}[thm]{Questions}
\newtheorem{example}[thm]{Example}
\newtheorem{examp}[thm]{Example}
\newtheorem{examples}[thm]{Examples}
 \newtheorem{rem}[thm]{Remark}
 \newtheorem{remark}[thm]{Remark}
  \newtheorem{remarks}[thm]{Remarks}
 \newtheorem*{remark*}{Remark}
 \newtheorem{defn}[thm]{Definition}
\newtheorem{schol}[thm]{Scholium}
\newtheorem{notation}[thm]{Notation}
\newtheorem{problem}[thm]{Problem}
\newtheorem*{notation*} {Notation}
\newtheorem*{notations*} {Notations}
\newtheorem*{comment*} {Comment}
\newcommand{\thmref}[1]{Theorem~\ref{#1}}
\newcommand{\propref}[1]{Proposition~\ref{#1}}
\newcommand{\lemref}[1]{Lemma~\ref{#1}}
 \renewcommand{\sectionmark}[1]{}
\newcommand{\bfem}[1]{\textbf{#1}}
\newcommand{\diag}{\operatorname{diag}}
\newcommand{\Cov}{\operatorname{Cov}}
 \newcommand{\dl}{\delta}
\newcommand{\Dl}{\Delta}
\newcommand{\lm}{\lambda}
 \newcommand{\id}{\operatorname{id}}
 \newcommand{\supp} {\operatorname{supp}}
\begin{document}

\title[Dominance and transmissions in supertropical valuation theory] {Dominance and transmissions \\\vskip 2mm in supertropical valuation theory}
\author[Z. Izhakian]{Zur Izhakian}
\address{Department of Mathematics, Bar-Ilan University, 52900 Ramat-Gan,
Israel}
\email{zzur@math.biu.ac.il}
\author[M. Knebusch]{Manfred Knebusch}
\address{Department of Mathematics,
NWF-I Mathematik, Universit\"at Regensburg 93040 Regensburg,
Germany} \email{manfred.knebusch@mathematik.uni-regensburg.de}
\author[L. Rowen]{Louis Rowen}
 \address{Department of Mathematics,
 Bar-Ilan University, 52900 Ramat-Gan, Israel}
 \email{rowen@macs.biu.ac.il}

\thanks{The research of the first and third authors have  been  supported  by the
Israel Science Foundation (grant No.  448/09).}

\thanks{The research of the second author was supported in part by
 the Gelbart Institute at
Bar-Ilan University, the Minerva Foundation at Tel-Aviv
University, the Department of Mathematics   of Bar-Ilan
University, and the Emmy Noether Institute at Bar-Ilan
University.}

\thanks{The research of the first author was supported  by the
Oberwolfach Leibniz Fellows Programme (OWLF), Mathematisches
Forschungsinstitut Oberwolfach, Germany.}

\subjclass[2010]  {Primary: 13A18, 13F30, 16W60, 16Y60; Secondary:
03G10, 06B23, 12K10,   14T05}

\date{\today}


\keywords{Supertropical algebra, Supertropical semirings, Bipotent
semirings, Valuation theory, Monoid valuations, Supervaluations,
Lattices}


\begin{abstract} This paper is a sequel of \cite{IKR1}, where we
defined  \textbf{supervaluations} on a commutative ring $R$ and
studied a \textbf{dominance relation} $\vrp \geq \psi$ between
supervaluations $\vrp$ and $\psi$ on $R$, aiming at an enrichment
of the algebraic tool box for use in tropical geometry.

A supervaluation $\vrp: R \to U$ is a multiplicative map from $R$
to a supertropical semiring $U$, cf.
\cite{IzhakianRowen2007SuperTropical},
\cite{IzhakianRowen2008Matrices}, \cite{IKR1}, with further
properties, which mean that $\vrp$ is a sort of refinement, or
covering, of an \m-valuation (= monoid valuation)  $v: R \to M$.
In the most important case, that $R$ is a ring, \m-valuations
constitute a mild generalization of valuations in the sense of
Bourbaki \cite{B}, while $\vrp \geq \psi$ means that $\psi: R \to
V$ is a sort of coarsening of the supervaluation $\vrp$. If
$\vrp(R)$ generates the semiring $U$, then $\vrp \geq \psi$ iff
there exists a ``\textbf{transmission}'' $\al:  U \to V$ with
$\psi = \al \circ \vrp$.

Transmissions are multiplicative maps with further properties, cf.
\cite[\S5]{IKR1}. Every semiring homomorphism $\al : U \to V$ is a
transmission, but there are others which lack additivity, and this
causes a major difficulty. In the main body of the paper we study
surjective transmissions via equivalence relations on
supertropical semirings, often much more complicated than
congruences by ideals in usual commutative algebra.
\end{abstract}

\maketitle

\tableofcontents

\baselineskip 14pt

\numberwithin{equation}{section}

\section*{Introduction}

We set forth a study in supertropical valuation theory begun in
\cite{IKR1}. Generalizing  Bourbaki's notion of a valuation on a
commutative ring \cite{B}, we there introduced
\emph{\m-valuations} (= monoid valuations) and then
\emph{supervaluations} on a commutative semiring. These are
certain maps from a semiring $R$ to a ``\emph{bipotent semiring}''
$M$ and a ``\emph{supertropical semiring}'', respectively.

To repeat, if $M$ is a \textbf{bipotent semiring}, here always
commutative, then the set $M$ is a totally ordered monoid  under
multiplication  with smallest element $0$, and the addition is
given by $x+ y = \max(x,y)$. Then an \textbf{\m-valuation} on $R$
is a multiplicative map $v: R \to M $, which sends $0$  to $0$,
$1$ to $1$, and obeys the rule $v(a+b) \leq v(a) + v(b).$ We call
$v$ a \textbf{valuation}, if moreover the semiring $M$ is
cancellative. \{In the classical case of a Krull valuation $v$,
$R$ is a field and $M = \tG \cup \{0\}$, with $\tG$ the valuation
group of $v$ in multiplicative notation.\}

A \textbf{supertropical semiring} $U$ is a -- here always
commutative -- semiring such that $e:= 1+1$ is an idempotent of
$U$ and some axioms hold (\cite[\S3]{IKR1}), which imply in
particular that the ideal $M := eU$ is a bipotent semiring. The
elements of $M \sm \{ 0 \}$ are called \textbf{ghost} and those of
$\tT(U) := U \sm M$ are called \textbf{tangible}. The zero element
of $U$ is regarded both ghost and tangible. For $x \in U$ we call
$ex$ the \textbf{ghost companion} of $x$. For $x,y \in U$ we have
the rule
$$ x + y = \left\{
\begin{array}{llll}
  y  &  & \text{if} &  ex < ey, \\
  x  &  & \text{if} &  ex > ey, \\
  ex  &  & \text{if} &  ex = ey. \\
\end{array}
\right.$$ Thus the addition in $U$ is uniquely defined by the
multiplication and the element $e$.  We also mention that $ex = 0
$ implies $x = 0$. We refer to \cite[\S3]{IKR1} for all details.

Finally, a \textbf{supervaluation} on $R$ is a multiplicative map
$\vrp: R \to U$ to a supertropical semiring $U$ sending $0$ to $0$
and $1$ to $1$, such that the map $e \vrp : R \to eU$, $a \mapsto
e \vrp(a)$, is an \m-valuation. We then say that $\vrp$
\textbf{covers} the \m-valuation  $v:= e\vrp$.

If $\vrp: R \to U$ is a  supervaluation  then $U' := \vrp(R) \cup
e \vrp(R)$ is a sub-semiring of $U$ and is again supertropical. In
practice we nearly always may replace $U$ by $U'$ and then have a
supervaluation at hands which we call \textbf{surjective}.

Given a surjective supervaluation $\vrp: R \to U$ and a map $\al:U
\to V$ to a supertropical semiring $V$, the map $\al \circ \vrp$
is again a supervaluation iff $\al$ is multiplicative, sends $0$
to $0$, $1$ to $1$, $e$ to $e$, and restricts to a semiring
homomorphism from $eU$ to $eV$. \{We denote the elements $1+1$ in
$U$ and $V$ both by ``$e$''.\} We call such a map $\al: U \to V$ a
\textbf{transmission}. Any semiring homomorphism from $U$ to $V$
is a transmission, but usually there exist also many transmissions
which are not additive.

The study of transmissions is the central topic of the present
paper. Transmissions are tied up with the relation of
\textbf{dominance} defined in \cite[\S5]{IKR1}. If $\vrp: R \to U
$ and $\psi: R \to V$ are supervaluations and $\vrp$ is
surjective, then $\vrp$ \textbf{dominates} $\psi$, which we denote
by $\vrp \geq \psi$, iff there exists a transmission $\al: U \to
V$ with $\psi =\al \circ \vrp$.

Already in \cite{IKR1} we studied dominance for supervaluations
which cover a fixed, say,  surjective \m-valuation  $v: R \to M$.
We called two such supervaluations $\vrp, \psi$
\textbf{equivalent} if $\vrp \geq \psi$ and $\psi \geq \vrp$. The
set $\Cov(v)$ of equivalent classes   $[\vrp]$ of supervaluations
$\vrp: R \to U$ covering $v$ (having varying target $U$ with $eU
=M$) turns out to be a complete lattice under the dominance
relation \cite[\S7]{IKR1}.

The bottom element of $\Cov(v)$ is the class $[v]$, with $v$
viewed as a supervaluation. The top element is given by a
surjective supervaluation $\vrp_v:R \to U(v)$, which we could
describe explicitly in the case that $v$ is valuation, i.e., $M$
is cancellative \cite[Example 4.5 and Corollary 5.14]{IKR1}.

We come to the contents of the present paper.  If $v: R \to M$ is
an \m-valuation and $\gm: M \to N$ is a homomorphism from $M$ to a
bipotent semiring $N$, then $\gm \circ v$ clearly  again is an
\m-valuation, called a \textbf{coarsening} of $v$. This
generalizes the usual notion of coarsening for Krull valuations.
It is of interest to look for relations between the lattices
$\Cov(v)$ and $\Cov(\gm \circ v)$. \S1 gives a first step in this
direction. Given $\gm: M \to N$ and a supertropical semiring $U$
with ghost ideal $M$ we look for transmissions $\al : U \to V$
which \textbf{cover} $\gm$, i.e., $V$ has the ghost ideal $N$ and
$\al(x) = \gm (x)$ for $x \in M$. Assuming that $\gm$ is
surjective, we prove that there exists an \textbf{initial} such
transmission  $\al = \al_{U,\gm}: U \to U_\gm$. This means that
any other transmission $\al': U \to V'$ covering $\gm$ is obtained
from $\al$ by composition with a transmission $\bt: U_\gm \to V'$
covering the identity of~$N$. This allows us to define an order
preserving  map $$\gm_*: \Cov(v) \to \Cov(\gm \circ v),$$ sending
a supervaluation $\vrp: R \to U$ to $\gm_*(\vrp) : = \al_{U,\gm}
\circ \vrp$. In good cases $\al_{U,\gm}$ has a ``pushout property"
(cf. Definition \ref{defn1.2}), that is even stronger than to be
initial, and $\al_{U,\gm}$ can be described explicitly (cf.
Theorem \ref{thm1.12}).

We defined in \cite[\S2]{IKR1} \textbf{strong valuations} and  in
\cite[\S9]{IKR1} \textbf{strong supervaluations}, which by
definition are covers of strong valuations. Tangible strong
supervaluations seems to be the most suitable  supervaluations for
applications in tropical geometry, hence our interest in them.
Given a strong strong supervaluation $v: R\to M$ we proved that
the set $\tsCov(v)$ of tangible strong supervaluations is a
complete sublattice of $\Cov(v)$ \cite[\S10]{IKR1}. In particular
this set is not empty.  In \S2 of the present paper we study the
behavior of such supervaluations covering $v$ under the map
$\gm_*$ from above. It turns out that $\gm_*(\tsCov(v)) \subset
\tsCov(\gm \circ v)$.

Denoting a representative of the top element of $\tsCov(v)$ by
$\bvrp_v$, we observe that $\gm_*([\bvrp_v])$ is most often
different from $[\bvrp_{\gm \circ v}]$. On the other hand,
$\gm_*([\vrp_v]) = [\vrp_{\gm \circ v}]$. This indicates that it
is not advisable to restrict supervaluation theory from start to
strong supervaluations, even if we are only interested in these.

The rest of the paper is devoted to an analysis and examples of
surjective transmissions. After a preparatory \S3, in which the
construction of a large class of supertropical semirings is
displayed, we study in \S4 \emph{``transmissive'' equivalence
relations.}

We call an equivalence relation  $E$ on a supertropical semiring
$U$ \textbf{transmissive}, if $E$ is multiplicative (= compatible
with multiplication), and the set of $E$-equivalence classes $U/
E$ admits the structure of a supertropical semiring such that the
natural map $\pi_E: U \to U/E$ is a transmission. (There can be at
most one such semiring structure on the set $U/E$.) Every
surjective transmission $\al: U\to V$ has the form $\al \circ
\pi_E$ with a (unique) transmissive equivalence relation $E$ and
an isomorphism $\rho: U/E \iso V$. Thus having a hold on the
transmissive equivalence relations means understanding
transmissions in general.

In all following $U$ denotes a supertropical semiring. The main
result of \S4 is an axiomatic description of those transmissive
equivalence relations $E$ on $U$, for which the ghost ideal of
$U/E$ is a cancellative semiring (Theorem \ref{thm4.7}, Definition
\ref{defn4.5}). We also give a criterion  that the transmission
$\pi_E$ is pushout, as defined in \S1 (Theorem \ref{thm4.13}), and
we analyse, which  ``orbital'' equivalence relations, defined in
\cite[\S8]{IKR1}, are transmissive. These exhaust \emph{all}
transmissive equivalence relations on $U$, if $U$ is a
\emph{supertropical semifield}, i.e., all tangibles $\neq 0 $ are
invertible in $U$, and all ghosts $\neq 0$ are invertible in $eU$.

We call a transmissive equivalence relation on $U$
\textbf{homomorphic} if the map $\pi_E : U \to U/E$ is a semiring
homomorphism. In \S5 we discuss a very special and easy, but
important class of such equivalence relations. Then in the final
section \S6 we look at homomorphic  equivalence relations in
general.

Given a homomorphic  equivalence relations $\Phi$ on $M:= eU$ we
classify all homomorphic  equivalence relations $E$ on $U$ which
extend $\Phi$. Here  \textbf{additivity} of $E$, i.e.,
compatibility with addition, causes the main difficulty. Thus, to
ease understanding, we first perform the classification program
for additive equivalence relations (Theorem $6.6'$), and then add
considerations on multiplicativity  to find the homomorphic
equivalence relations (Theorem~\ref{thm6.10}).

We close the paper with examples of homomorphic  equivalence
relations using the classification,  and also indicate
consequences for other transmissive  equivalence relations.

\begin{notations*}
Given sets $X,Y$ we mean by $Y \subset X$ that $Y$ is a subset of
$X$, with $Y  = X$ allowed. If $E$ is an equivalence relation on
$X$ then $X/E$ denotes the set of $E$-equivalence classes in $X$,
and $\pi_E: X \to X/E$ is the map which sends an element $x$ of
$X$ to its $E$-equivalence class, which we denote by $[x]_E$. If
$Y \subset X$, we put $Y/E := \{[x]_E  \ds | x \in Y\}.$

If $U$ is a supertropical semiring, we denote the sum $1+1$ in $U$
by $e$, more precisely by $e_U$ if necessary. If $x \in U$ the
\textbf{ghost companion} $ex$ is also denoted by $\nu(x)$ or
$x^\nu$, and the \textbf{ghost map} $U \to eU$, $x \mapsto
\nu(x)$, is denoted by $\nu_U$. If $\al: U \to V$ is a
transmission, then the semiring homomorphism $eU \to eV$ obtained
from $\al$ by restriction is denoted by  $\al^\nu$ and is called
the \textbf{ghost part} of $\al$. Thus $\al^\nu \circ \nu_U =
\nu_V \circ \al$.

If $v : R \to M $ is an \m-valuation we call the ideal $v^{-1}(0)$
of $R$ the \textbf{support} of $v$, and denote it by $\supp(v)$.
If $\vrp: R \to U$ is a supervaluation covering $v$, we most often
denote the equivalence  class $[\vrp] \in \Cov(v)$ abusively again
by $\vrp$
\end{notations*}

\section{Initial transmissions and a pushout property}\label{sec:1}

We state the main problem  which we address in this section.

\begin{problem}\label{prob1.1} Assume that $U$ is a supertropical
semiring with ghost ideal $eU=M,$ and $\gamma: M\to M'$ is a
semiring homomorphism from $M$ to a bipotent semiring $M'.$ Find a
supertropical semiring $U'$ with ghost ideal $eU'=M'$ and a
transmission $\alpha: U\to U'$ covering $\gamma,$ i.e.,
$\alpha^\nu=\gamma$ (cf. \cite[Definition 5.3]{IKR1}), with the
following universal property. Given a transmission $\beta: U\to V$
into a supertropical semiring $V$, with ghost ideal $N:=eV,$ and a
semiring homomorphism $\delta: M'\to N$, such that $\bt^\nu  = \dl
\gm$, there exists a unique transmission $\eta: U'\to V$ such that
$\beta=\eta\circ\alpha$ and $\eta^\nu=\delta.$
\end{problem}

We indicate this problem by the following commuting diagram
$$
\begin{xy}
\xymatrix{   U \ar@/^/@<+1ex>[rr]^\beta \ar@{>}[r]_\alpha  & U'\ar@{.>}[r]_\eta & V  \\
 M \ar@{^{(}->}[u] \ar@{>}[r]_\gamma & M'
  \ar@{>}[r]_\delta \ar@{^{(}->}[u] &
 N \ar@{^{(}->}[u]
}
\end{xy}
$$
where the vertical arrows are inclusion mappings.

We call such a map $\alpha: U\to U'$ a \textit{pushout
transmission covering} $\gamma.$ This terminology alludes to the
fact that our universal property means that the left square in the
diagram above is a pushout (=cocartesian) square in the category
STROP, whose objects are the supertropical semirings, and whose
morphisms are the transmissions. To see this, just observe that a
map $\rho: L\to W$ from a bipotent semiring $L$ to a supertropical
semiring $W$ is transmissive iff $\rho$ is a semiring homomorphism
from $L$ to $eW$ followed by the inclusion~$eW\hookrightarrow W.$

It is now obvious that, for a given homomorphism $\gamma: M\to
M'$, Problem \ref{prob1.1} has at most one solution up to
isomorphism over $M'$ and $U.$ More precisely, if both $\alpha:
U\to U'$ and $\alpha_1: U\to U_1$ are solutions, there exists a
unique isomorphism $\rho: U'\to U_1$ of semirings over $M'$ with
$\alpha_1=\alpha'\circ\rho.$

We may cast the universal property above in terms of $\alpha$
alone and then arrive at the following formal definition.

\begin{defn}\label{defn1.2} We call a map $\alpha: U\to V$ between
supertropical semirings a \bfem{pushout transmission} if the
following holds:
\begin{enumerate}
\item[1)] $\alpha$ is a transmission.

\item[2)] If $\beta: U\to W$
is a transmission from $U$ to a supertropical semiring $W$ and
$\delta:eV\to eW$ is a  semiring homomorphism with
$\beta^\nu=\delta\circ\alpha ^\nu$, then there exists a unique
transmission $\eta: U\to W$ with $\eta^\nu=\delta$ and
$\beta=\eta\circ\alpha.$
\end{enumerate}
We then also say that $V$ is ``the" \bfem{pushout of} $U$
\bfem{along} $\gamma.$\end{defn}

The notion of a pushout transmission can be weakened by demanding
the universal property in Definition \ref{defn1.2} only for $W=V$
and $\delta$ the identity of $eV.$ This is still interesting.

\begin{defn}\label{defn1.3} We call a transmission $\alpha: U\to
V$ between supertropical semirings an \bfem{initial transmission},
if, for any transmission $\beta: U\to W$ with $eW=eV$ and
$\beta^\nu=\alpha^\nu$, there exists a unique semiring
homomorphism\footnote{Every transmission $\eta$ with $\eta^\nu$
injective is a homomorphism \cite[Proposition 5.10.iii]{IKR1}.}
$\eta: V\to W$ over $eV=eW$ with $\beta=\eta\circ \alpha.$
\end{defn}

Given a supertropical semiring $U$ and a semiring homomorphism
$\gamma: eU\to N$ with $N$ bipotent, it is again clear that there
exists at most one initial transmission $\alpha: U\to V$ covering
$\gamma$ (in particular, $eV=N)$ up to isomorphism over $U$ and
$N.$

We turn to the problem of existence, first for initial
transmissions and then for pushout transmissions. In the first
case we can apply results on supervaluations from \cite[\S4 and
\S7]{IKR1}, due to the following easy but important observation.

\begin{prop}\label{prop1.4} Let $\alpha: U\to V$ be a map between
supertropical semirings and $\gamma: eU\to eV$ a semiring
homomorphism. The following are equivalent:
\begin{enumerate}\item[a)] $\alpha$ is a transmission covering
$\gamma.$ \item[b)] $\alpha $ is a supervaluation on the semiring
$U$ with $\alpha(e_U)=e_V$ covering the strict \m-valuation $v : =
\gamma\circ \nu_U: U\to eV.$ \end{enumerate}\end{prop}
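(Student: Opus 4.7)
The plan is to verify both implications by direct unpacking of the definitions. Everything is driven by one observation: whenever $\alpha(e_U)=e_V$, multiplicativity gives
\[
 \alpha(e_U \, x) \; = \; \alpha(e_U)\,\alpha(x) \; = \; e_V\,\alpha(x)
\]
for every $x \in U$, so the ``ghost companion'' $e_V \alpha$ and the composite $\alpha \circ \nu_U$ coincide. This identity is the bridge between the transmission and the supervaluation viewpoints, and both implications will follow from it.

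For the direction (a) $\Rightarrow$ (b), I start from the transmission axioms: $\alpha$ is multiplicative, sends $0,1,e_U$ to $0,1,e_V$, and $\alpha|_{eU}=\gamma$ is a semiring homomorphism. Combining this with the bridge identity gives $e_V \alpha(x) = \alpha(\nu_U(x)) = \gamma(\nu_U(x))$, so $e_V \alpha = \gamma \circ \nu_U = v$. It then remains to observe that $v$ is a strict \m-valuation on $U$: the map $\nu_U$ satisfies $\nu_U(a+b) = \nu_U(a)+\nu_U(b)$ by distributivity, so applying the semiring homomorphism $\gamma$ yields $v(a+b) = v(a)+v(b)$, which is even an equality (hence strictness). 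Combined with $\alpha(0)=0$, $\alpha(1)=1$ and multiplicativity, this shows $\alpha$ is a supervaluation on $U$ covering $v$ with $\alpha(e_U)=e_V$.

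For the direction (b) $\Rightarrow$ (a), I start from the supervaluation axioms together with the extra hypothesis $\alpha(e_U)=e_V$; multiplicativity and the unit conditions of a transmission are then immediate. The only real check is that $\alpha|_{eU}$ coincides with $\gamma$, for once this is known, $\alpha|_{eU}$ is automatically a semiring homomorphism (being the restriction to $eU$ of $\gamma: eU \to eV$). Writing any $x \in eU$ as $x = e_U y$ and using the bridge identity together with the covering hypothesis $e_V\alpha = v$, I get
\[
 \alpha(x) \; = \; e_V\,\alpha(y) \; = \; (e_V\alpha)(y) \; = \; v(y) \; = \; \gamma(\nu_U(y)) \; = \; \gamma(x),
\]
so $\alpha^\nu = \gamma$, as demanded by \cite[Definition~5.3]{IKR1}.

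I do not expect any serious obstacle here: both directions reduce to routine applications of multiplicativity together with the identity $\alpha(e_U)=e_V$. The only conceptual subtlety is recognising that the ghost companion $e_V\alpha$ and the composite $\gamma\circ\nu_U$ are the same map under these hypotheses; once that identification is made, the axioms for being a transmission covering $\gamma$ and the axioms for being a supervaluation covering $v$ line up term by term.
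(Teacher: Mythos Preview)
Your proof is correct and follows essentially the same approach as the paper: both hinge on the identity $\alpha(e_U x)=e_V\alpha(x)$ (valid once $\alpha$ is multiplicative with $\alpha(e_U)=e_V$), which reduces the covering condition $\alpha|_{eU}=\gamma$ of a transmission to the covering condition $e_V\alpha=\gamma\circ\nu_U$ of a supervaluation. The paper presents this more tersely by first matching TM1--TM4 with SV1--SV3 plus $\alpha(e)=e$ and then observing that, under these, TM5 together with $\alpha^\nu=\gamma$ is equivalent to SV4 for $v=\gamma\circ\nu_U$; your version unpacks the two implications separately and adds the (routine) verification that $v$ is strict, but the substance is the same.
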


We then have the commuting diagram
$$\xymatrix{
           U %
            \ar[d]_{\nu_U}     \ar[rr]^{\al} \ar[rrd]^{v}   &&      V
       \ar[d]^{\nu_V}
       \\
      eU    \ar[rr]^{\gamma}   &&        eV  \ \ .
 }$$

\begin{proof} We have to compare the axioms SV1--SV4 in \cite[\S4]{IKR1} plus
the condition $\alpha(e)=e$ with the axioms TM1--TM5 in
\cite[\S5]{IKR1}. The axioms SV1--SV3   say literally the same as
TM1--TM3, and the condition $\alpha(e)=e$ is TM4.

We now assume that $\alpha$ fulfills TM1--TM4. For every $x\in U$
we have $\alpha(ex)=\alpha(e)\alpha(x)=e\alpha(x).$ That $\alpha$
is a transmission covering $\gamma$ means that
$\alpha(z)=\gamma(z)$ for all $z\in eU.$ This is equivalent to
$\alpha(ex)=\gamma(ex)$ for all $x\in U;$ hence to the condition
$e\alpha(x)=\gamma\circ\nu_U(x)$ for all $x\in U.$ But this means
that $\alpha$ is a supervaluation covering
$\gamma\circ\nu_U.$\end{proof}

\begin{thm}\label{thm1.5} Given a supertropical semiring $U$ with
ghost ideal $M:=eU$ and a surjective homomorphism $\gamma: M\to
M'$ to a bipotent semiring $M',$ there exists an initial
transmission $\alpha: U\to U'$ covering $\gamma.$\end{thm}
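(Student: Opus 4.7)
The plan is to reduce the assertion, via Proposition~\ref{prop1.4}, to producing a particular supervaluation on $U$, and then realise it as a controlled quotient of the top element of the lattice $\Cov(v)$ for $v := \gm \circ \nu_U : U \to M'$, whose existence is guaranteed by \cite[\S7]{IKR1}. By Proposition~\ref{prop1.4}, a transmission $\al : U \to V$ covering $\gm$ is the same thing as a supervaluation on the semiring $U$ covering the surjective \m-valuation $v$ with the extra constraint $\al(e_U) = e_V$. Choose a representative $\vrp_v : U \to U(v)$ of the top of $\Cov(v)$; by the top property, every supervaluation $\bt : U \to W$ covering $v$ factors uniquely as $\bt = \dl \circ \vrp_v$ with $\dl : U(v) \to W$ a transmission over $\mathrm{id}_{M'}$ (the equality $\dl^\nu = \mathrm{id}$ is forced by surjectivity of $v$), and hence a semiring homomorphism by the footnote to Definition~\ref{defn1.3}.

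Set $t := \vrp_v(e_U) \in U(v)$. The identity $e_{U(v)} t = v(e_U) = \gm(e_U \cdot e_U) = \gm(e_U) = e_{U(v)}$ shows that $t$ is a lift of $e_{U(v)}$. Let $E$ be the smallest semiring congruence on $U(v)$ containing $(t, e_{U(v)})$ (as the intersection of all such congruences), put $U' := U(v)/E$, and define $\al := \pi_E \circ \vrp_v$. Then $\al(e_U) = \pi_E(t) = \pi_E(e_{U(v)}) = e_{U'}$. The crucial point is that $U'$ is actually supertropical with ghost ideal canonically $M'$. To see that $\pi_E$ does not collapse $M'$, regard $v : U \to M'$ itself as a transmission covering $\gm$ (with $M'$ viewed supertropically; indeed $v(e_U) = e_{M'}$ and $v|_M = \gm$), and apply the top property to obtain a semiring homomorphism $\dl_v : U(v) \to M'$ over $\mathrm{id}_{M'}$ with $v = \dl_v \circ \vrp_v$. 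Since $\dl_v(t) = v(e_U) = e_{M'} = \dl_v(e_{U(v)})$, the congruence $\ker \dl_v$ contains $(t, e_{U(v)})$, so $E \subseteq \ker \dl_v$; as $\dl_v$ restricts to the identity of $M'$, it follows that $\pi_E$ is injective on $M'$, and the supertropical axioms of \cite[\S3]{IKR1} (in particular $ex = 0 \Rightarrow x = 0$) descend to $U'$.

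Finally, for initiality: given any transmission $\bt : U \to W$ covering $\gm$, view it as a supervaluation covering $v$ to get the unique factorisation $\bt = \dl \circ \vrp_v$ with $\dl : U(v) \to W$ a semiring homomorphism over $\mathrm{id}_{M'}$. The transmission condition $\bt(e_U) = e_W$ forces $\dl(t) = e_W = \dl(e_{U(v)})$, hence $E \subseteq \ker \dl$, and $\dl$ factors uniquely as $\dl = \tilde\dl \circ \pi_E$ for a semiring homomorphism $\tilde\dl : U' \to W$ over $\mathrm{id}_{M'}$ with $\bt = \tilde\dl \circ \al$; uniqueness of $\tilde\dl$ is inherited from that of $\dl$. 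The principal technical obstacle in executing this plan is the descent of the full supertropical structure (not merely the semiring structure) to $U'$; the auxiliary map $\dl_v$ is precisely what rules out any pathological collapses forced by $E$.
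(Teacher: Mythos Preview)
Your proof is correct and follows essentially the same route as the paper's: both reduce via Proposition~\ref{prop1.4} to supervaluations covering $v = \gamma \circ \nu_U$, factor through the universal $\varphi_v : U \to U(v)$, and then pass to the quotient by the finest relation identifying $\varphi_v(e_U)$ with $e_{U(v)}$. The only cosmetic difference is that the paper phrases this quotient as the finest \emph{MFCE}-relation with $f \sim_T e$ (invoking Lemma~\ref{lem1.6}), whereas you take the smallest semiring congruence containing $(t,e_{U(v)})$; these coincide because your injectivity of $\pi_E|_{M'}$ (obtained from $\delta_v$) forces $E$ to be fiber-conserving, which is precisely what makes the supertropical axioms---in particular (3.3$''$)---descend to $U'$, and simultaneously shows $E$ is already an MFCE-relation.
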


\begin{proof} We introduce the strict surjective valuation
$$v=\gamma\circ \nu_U: U\twoheadrightarrow M'.$$ By \cite[\S7]{IKR1}
there exists an initial surjective supervaluation $\varphi_v: U\to
U(v)$ covering $v.$ (In particular, $eU(v)=M'.) $ The other
surjective supervaluations $\psi: U\to V$ covering $\gamma$ are
the maps $\pi_T\circ\varphi_v$ with $T$ running through the set of
all MFCE-relations on $U(v),$ as explained in \cite[\S7]{IKR1}.

Let $f: =\varphi_v(e_U)$ and $e:=e_{U(v)}=1_M.$ \propref{prop1.4}
tells us that $\pi_T\circ\varphi_v$ is the initial transmission
covering $\gamma$ iff $f\sim_T e$ and moreover $T$ is finer than
any other MFCE-relation on $U(v)$ with this property. Now we
invoke the following easy lemma, to be proved below.

\begin{lemma}\label{lem1.6} If $W$ is a supertropical semiring and
$X$ is a subset of $W,$ there exists a unique finest MFCE-relation
$E$ on $W$ with $x\sim_E e_Wx$ for every $x\in X.$\end{lemma}

We apply the lemma to $W=U(v)$ and $X=\{f\}$, and obtain a finest
equivalence relation $T$ on $U(v)$ with $f\sim_T ef.$ But
$$ef=\nu_{U(v)}\circ \varphi_v(e_U)=v(e_U)=e.$$
Thus, $T$ is the unique finest MFCE-relation on $U(v)$ with
$f\sim_Te,$ and $T$ gives us the wanted initial transmission
$\alpha=\pi_T\circ\varphi_v.$\end{proof}

\begin{proof}[Proof of \lemref{lem1.6}] The set $\mathcal M$ of
all MFCE-relations $F$ on $W$ with $x\sim_F ex$ for all $x\in X$
is not empty, since it contains the relation $E(\nu_W).$ The
relation $E:=\bigwedge \mathcal M,$ i.e., the intersection of all
$F\in\mathcal M,$ has the desired property.
\end{proof}

\begin{notation}\label{notation1.7} We denote ``the" initial transmission in
\thmref{thm1.5} by $\alpha_{U,\gamma}$, the semiring~$U'$ by
$U_\gamma,$ and the equivalence relation $E(\alpha_{U,\gamma})$ by
$E(U,\gamma).$\end{notation}

This notation is sloppy, since $\alpha_{U,\gamma}$ is determined
by $U$ and $\gamma$ only up to isomorphism. But $E(U,\gamma)$
truly depends only on $U$ and $\gamma.$ The ambiguity for
$\alpha_{U,\gamma}$ can be avoided if $\gamma$ is surjective, due
to the following lemma.

\begin{lem}\label{lem1.8} If $\alpha: U\to V$ is an initial
transmission covering a surjective homomorphism  $\gamma: M\to
M'$, then $\alpha$ itself is a surjective map.\end{lem}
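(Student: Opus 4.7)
The plan is to show that the image $V' := \alpha(U)$ is already a sub-supertropical semiring of $V$ carrying the \emph{full} ghost ideal $eV$, and then to exploit the uniqueness clause in the definition of an initial transmission to force $V' = V$.

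First I would check that $V'$ is a subsemiring. Multiplicative closure is immediate from the multiplicativity of $\alpha$. For additive closure, I would invoke the addition rule in a supertropical semiring recalled in the introduction: the sum $x' + y'$ equals one of $x'$, $y'$, or the common ghost companion $ex' = ey'$ according to the comparison of $ex'$ and $ey'$. Applied to $\alpha(x), \alpha(y)$, this makes $\alpha(x) + \alpha(y)$ either $\alpha(x)$, $\alpha(y)$, or $e\alpha(x) = \alpha(ex) = \gamma(ex)$. Since $\alpha$ restricts on $eU = M$ to $\gamma$ and $\gamma$ is surjective, we have $\gamma(M) = \alpha(M) = M' = eV \subset V'$, so each such sum lies in $V'$. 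In particular $eV' = \{\alpha(ex) : x \in U\} = \gamma(M) = eV$.

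Second I would corestrict $\alpha$ to a transmission $\tilde\alpha : U \to V'$; this has the same ghost part $\gamma$, and now $eV' = eV$, so $\tilde\alpha$ is a legitimate input to the initiality property of $\alpha : U \to V$ (with $\delta$ chosen to be the identity of $eV$). This yields a unique semiring homomorphism $\eta : V \to V'$ over $eV$ satisfying $\tilde\alpha = \eta \circ \alpha$. Composing with the inclusion $\iota : V' \hookrightarrow V$ produces a semiring endomorphism $\iota \circ \eta$ of $V$ over $eV$ with $(\iota \circ \eta) \circ \alpha = \alpha$. Since $\id_V$ obviously has the same property, a second application of the uniqueness clause (this time with $W = V$) forces $\iota \circ \eta = \id_V$; hence $\iota$ is surjective and $V = V' = \alpha(U)$.

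The only real obstacle is the identification $eV' = eV$, and this is precisely where the surjectivity of $\gamma$ is used: without it, $\alpha(U)$ could carry a strictly smaller ghost ideal $\gamma(M) \subsetneq M'$, in which case $\tilde\alpha$ could not be fed into the initiality property of $\alpha$ and the argument would collapse.
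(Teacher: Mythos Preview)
Your proof is correct and follows essentially the same route as the paper's: corestrict $\alpha$ to its image, apply initiality to obtain a map $\eta : V \to \alpha(U)$, then use the uniqueness clause against $\id_V$ to force the inclusion to be surjective. You have simply made explicit what the paper leaves implicit, namely that $\alpha(U)$ is a sub-supertropical-semiring of $V$ with ghost ideal equal to all of $M'$ (the latter being exactly where surjectivity of $\gamma$ enters, as you point out).
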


\begin{proof} $V_1:=\alpha(V)$ is a subsemiring of $V$ and thus a
supertropical semiring itself. Replacing $V$ by $V_1$ we obtain
from $\alpha$ a surjective transmission $\alpha_1: U\to V_1.$
Since $\alpha$ is initial there exists a unique transmission
$\eta: V\to V_1$ over $M'$ with $\alpha_1=\eta\alpha.$ Also
$\alpha=j\alpha_1$ with $j$ the inclusion from $V_1$ to $V.$ By
the universal property of $\alpha$ we conclude from
$\alpha=j\eta\alpha$ that $j\eta$ is the identity on $V.$ This
forces $V=V_1.$
\end{proof}

Thus, if $\gamma$ is surjective, we have a canonical choice for
$U_\gamma$ and $\alpha_{U,\gamma}$, namely,
$U_\gamma=U/E(U,\gamma)$ and
$\alpha_{U,\gamma}=\pi_{E(U,\gamma)}.$ Usually we will understand
by $U_\gamma$ and $\alpha_{U,\gamma}$ this semiring and
transmission.

In light of \thmref{thm1.5} our main Problem \ref{prob1.1} can be
posed as follows: Given $U$ and $\gamma,$ is $\alpha_{U,\gamma}:
U\to U_\gamma$ a pushout transmission?

\medskip

We assume in the following that $\gamma: M\to M'$ \textit{is
surjective and} $M'$ \textit{is a cancellative bipotent domain};
hence $v=\gamma\circ\nu_U$ is a strict surjective valuation. In
this case we will obtain a positive solution of the problem. The
point here is that we can give an explicit description of
$U_\gamma$ and $\alpha_{U,\gamma}$, which allows us to check the
pushout property.

We  already have an explicit description of $\varphi_v: U\to
U(v),$ given in \cite[\S4]{IKR1}. Thus all we need is an explicit
description of the finest MFCE-relation $T$ on $U(v)$ with
$f\sim_T e.$ We develop such a description in a more general
setting.

\medskip

Assume that $U$ is a supertropical semiring, $e:=e_U,$ and $f$ is
an idempotent of $U.$ The ideal $L:=fU$ of $U$ is again a
supertropical semiring with unit element~$f$ (under the addition
and multiplication of $U),$  since $L$ is a homomorphic image of
$U.$ We have $e_L=f+f=ef.$

If $F$ is an equivalence relation on the set $L,$ there is a
unique finest equivalence relation~$E$ on $U$ extending $F.$ It
can be described as follows. Let $x_1,x_2\in U.$ Then $x_1\sim_E
x_2$ iff either $x_1=x_2$ or $x_1\in L$, $x_2\in L$ and $x_1\sim_F
x_2.$ We call $E$ the \bfem{minimal extension} of the equivalence
relation $F$ to $U.$

\begin{lemma}\label{lem1.9} Let $F$ be an equivalence relation on
$fU$, and let $E$ denote the minimal extension of $F$ to $U.$

\begin{enumerate} \item[a)]
 If $F$ is multiplicative, then $E$ is multiplicative.

\item[b)]
 If $F$ is fiber conserving, so is $E.$
\end{enumerate}
\end{lemma}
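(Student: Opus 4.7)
My plan is to exploit two facts about the ideal $L := fU$: first, $L$ absorbs multiplication by $U$, and second, $f$ acts as the unit element of $L$, i.e.\ $fx = x$ for every $x \in L$ (which follows from $f$ being idempotent in $U$). These let me reduce everything about $E$ to statements about $F$ inside $L$.

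For part (a), I would argue by case analysis, starting from $x_1 \sim_E x_2$ and $y_1 \sim_E y_2$. If $x_1 = x_2$ and $y_1 = y_2$ there is nothing to prove. In the mixed case, say $x_1 = x_2 =: x$ while $y_1, y_2 \in L$ with $y_1 \sim_F y_2$, I rewrite $xy_i = (xf) y_i$; since $L$ is an ideal, $xf \in L$, and since $y_i \in L$, multiplicativity of $F$ inside $L$ gives $(xf)y_1 \sim_F (xf)y_2$, with both products lying in $L$, so $x y_1 \sim_E x y_2$. The symmetric case is identical. Finally, if $x_1, x_2 \in L$ and $y_1, y_2 \in L$ with $x_1 \sim_F x_2$ and $y_1 \sim_F y_2$, multiplicativity of $F$ inside $L$ applies directly.

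For part (b), the crucial identity is that for $x \in L$,
$$ \nu_U(x) \ = \ ex \ = \ e(fx) \ = \ (ef)x \ = \ e_L \, x \ = \ \nu_L(x), $$
using $fx = x$ and $e_L = ef$. Hence on $L$ the fibers of the ambient ghost map $\nu_U$ coincide with the fibers of $\nu_L$. So if $F$ is fiber conserving on $L$ and $x_1 \sim_F x_2$ in $L$, one obtains $\nu_U(x_1) = \nu_U(x_2)$. Outside $L$ the relation $E$ is mere equality, so fiber conservation is automatic.

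The only subtle step is the mixed case of (a), where one pair is an equality and the other a proper $F$-equivalence: here one has to witness both that the products belong to $L$ (ideal property) and that they are $F$-equivalent (the $(xf)y$ rewriting combined with multiplicativity of $F$ on $L$). Everything else is bookkeeping on the definition of the minimal extension.
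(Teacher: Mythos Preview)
Your proof is correct and follows essentially the same approach as the paper. The paper's argument is slightly more streamlined: it only verifies the one-variable form $x_1\sim_E x_2 \Rightarrow x_1 z\sim_E x_2 z$ (which is all that is needed), writing $x_i z = x_i(fz)$ with $fz\in L$ and invoking multiplicativity of $F$ on $L$, and for part (b) it records the single line $ex_1=(ef)x_1=(ef)x_2=ex_2$; but the key ideas --- rewriting via the idempotent $f$ to land both factors in $L$, and the identity $e_L=ef$ relating $\nu_U$ and $\nu_L$ on $L$ --- are exactly the ones you use.
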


\begin{proof} Assume that $x_1,x_2$ are elements of $U$ with
$x_1\sim_E x_2.$ Assume (without loss of generality) that also
$x_1\ne x_2.$ Then $x_1,x_2\in fU$ and $x_1\sim_F x_2.$

If $F$ is multiplicative then, for any $z\in U,$
$$x_1z=x_1(fz)\sim_F x_2(fz)=x_2z;$$
hence $x_1z\sim_E x_2z.$ Thus $E$ is multiplicative.

If $F$ is fiber conserving, then
$$ex_1=(ef)x_1=(ef)x_2=ex_2.$$ Thus $E$ is fiber conserving.
\end{proof}

\begin{prop}\label{prop1.10} Assume that $U$ is a supertropical
semiring, $e:=e_U,$ and $f$ is an idempotent of $U.$ We define a
binary relation $E$ on $U$ by decreeing $(x_1,x_2\in U)$
$$x_1\sim_E x_2\quad\text{iff either}\quad
x_1=x_2\quad\text{or}\quad x_1,x_2\in fU\quad\text{and}\quad
ex_1=ex_2.$$
\begin{enumerate}
\item[a)]
 $E$ is an MFCE-relation on $U.$

\item[b)] If $ef=e,$ then $e\sim_E f,$ and $E$ is finer than any other
multiplicative equivalence relation $E'$ on $U$ with
$e\sim_{E'}f.$
\end{enumerate}%
\end{prop}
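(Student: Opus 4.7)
My plan is to prove part (a) by exhibiting $E$ as the minimal extension (in the sense of the paragraph preceding Lemma 1.9) of a suitable MFCE-relation on the sub-semiring $fU$, then invoke \lemref{lem1.9}. Specifically, the restriction of the relation to $fU$ is exactly
\[
x_1 \sim_F x_2 \iff e x_1 = e x_2, \qquad x_1, x_2 \in fU,
\]
which is the kernel of the ghost map of the supertropical semiring $fU$ (whose ghost ideal is $ef \cdot U$ and whose ghost element is $ef$, noting that for $x = fy \in fU$ we have $ex = ef y$). Thus $F$ is manifestly an equivalence relation, and it is multiplicative (kernels of ring/semiring-like maps are) and fiber-conserving (tautologically, since two elements with equal ghost companions have equal ghost companions). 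The relation $E$ defined in the proposition is then, by inspection, precisely the minimal extension of $F$ to $U$: outside $fU$ it only relates each element to itself, and inside $fU$ it coincides with $F$. Reflexivity, symmetry, and transitivity of $E$ are immediate from the ``or'' structure of the definition. Multiplicativity and fiber-conservation of $E$ then follow at once from \lemref{lem1.9}(a),(b).

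For part (b), assume $ef = e$. Then $e = ef \in fU$ and $f = f \cdot 1 \in fU$, and $e \cdot e = e = e \cdot f$, so by the definition of $E$ we have $e \sim_E f$.

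It remains to show $E$ is finer than any multiplicative equivalence relation $E'$ on $U$ with $e \sim_{E'} f$. The key observation I will use is this: for every $x \in fU$, writing $x = fy$, multiplicativity of $E'$ and $f \sim_{E'} e$ give
\[
x = fy \sim_{E'} ey = ex,
\]
where the last equality $ey = efy = ex$ uses the hypothesis $ef = e$. Now suppose $x_1 \sim_E x_2$. If $x_1 = x_2$ there is nothing to show; otherwise $x_1, x_2 \in fU$ and $ex_1 = ex_2$, so chaining
\[
x_1 \sim_{E'} ex_1 = ex_2 \sim_{E'} x_2
\]
gives $x_1 \sim_{E'} x_2$, as desired.

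There is no real obstacle here; the only subtle point is making sure one correctly identifies $E$ as the minimal extension of the ghost-kernel relation on $fU$ in order to apply \lemref{lem1.9} cleanly, and in part (b) remembering to invoke $ef = e$ at the step $efy = ey$ that identifies the image of $x$ under ``multiply by $e$'' with its image under ``multiply by $f$ then by $e$''.
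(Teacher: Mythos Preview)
Your proof is correct and follows essentially the same route as the paper's: part (a) identifies $E$ as the minimal extension to $U$ of the relation $E(\nu_L)$ on $L=fU$ and invokes \lemref{lem1.9}, while part (b) uses multiplicativity of $E'$ together with $ef=e$ to show $x\sim_{E'}ex$ for every $x\in fU$ and then chains through $ex_1=ex_2$. Your write-up is slightly more explicit than the paper's in justifying the step $x_i\sim_{E'}ex_i$, but the argument is the same.
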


\begin{proof} a) We apply the preceding lemma with $F$ the
relation $E(\nu_L)$ (cf. \cite[Example~6.4]{IKR1} on the
supertropical semiring $L:=fU.$ The minimal extension of $F$ to
$U$ is the relation $E$ defined in the proposition. Indeed, for
$x_1,x_2\in L$ we have $x_1\sim_F x_2$ if $efx_1=efx_2.$ Since
$fx_i=x_i$ $(i=1,2)$, this means that $ex_1=ex_2.$ By
\lemref{lem1.9} the relation $E$ is MFCE.

b) Assume now that $ef=e,$ i.e., $e\in L.$ Then $e\sim_E f$ by
definition of $E.$ Let $E'$ be any multiplicative equivalence
relation on $U$ with $e\sim_{E'} f.$ If $x_1,x_2\in U$ and
$x_1\sim_E x_2$ we want to conclude that $x_1\sim_{E'} x_2.$ We
may assume that $x_1\ne x_2.$ Then $x_1,x_2\in fU$ and
$ex_1=ex_2.$ Now $x_i\sim_{E'}ex_i$ $(i=1,2)$; hence $x_1\sim_{E'}
x_2,$ as desired.
\end{proof}

We are ready for a solution of Problem \ref{prob1.1} in the case
that $\gamma: M\to M'$ is surjective and $M'$ is a cancellative
bipotent semidomain; hence $v=\gamma\circ\nu_U$ is a strict
surjective valuation. As before, let $T$ denote the finest
MFCE-relation on $U(v)$ with $f\sim_Te$ for $e:=e_{U(v)}$ and
$f:=\varphi_v(e_U).$ Recall from the proof of \thmref{thm1.5} that
$ef=e.$ Thus \propref{prop1.10} applies. We spell out what the
proposition says in the present case.

For that we write the semiring $U(v)$ and the map $\varphi_v$ in a
way different from \cite[\S4]{IKR1}. Let $\htU$ denote a copy of
$U$ disjoint from $U$ with copying isomorphism $x\mapsto \htx.$ We
use this to distinguish an element $x\in U \setminus\mathfrak q,$
with $\mathfrak q:=\supp v,$ from the corresponding element in
$\mathcal T(U(v)).$ Thus we write $$
U(v)=(\htU\setminus\hat{\mathfrak q})\ \dot\cup\ M'$$ with
$\hat{\mathfrak q}:=\{\htx\bigm| x\in U,\, \gamma(e_Ux)=0\},$ and
$\varphi_v(x)=\htx$ for $x\in U\setminus\mathfrak q,$
$\varphi_v(x)=0$ for $x\in  \mathfrak q.$ Notice that $fU(v)=(\htM
\setminus{\hat{\mathfrak q}})\cup M'$ with $\htM:=\{\htx \ds |
x\in M\}.$

According to \propref{prop1.10} the equivalence relation $T$ has
the following description. Let $y_1,y_2\in U(v)$ be given with
$y_1\ne y_2.$ Then $y_1\sim_T y_2$ iff $y_1=\htx_1,$ $y_2=\htx_2$,
with either $x_1,x_2\in M$ and $\gamma(e_Ux_1)=\gamma(e_Ux_2)$ or
$x_1,x_2\in U$ and $\gamma(e_Ux_1)=\gamma(e_Ux_2)=0.$ We may
choose $U_\gamma=U(v)/T$ and
$\alpha_{U,\gamma}=\pi_T\circ\varphi_v.$ The transmission
$\alpha:=\alpha_{U,\gamma}$ is a surjective map from $U$
to~$U_\gamma$, and the equivalence relation $E(\alpha)$ is the
relation $E(U,\gamma)$ defined in Notation~\ref{notation1.7}. Thus
$E:=E(U,\gamma)$ has the following description: If $x_1,x_2\in U$
and $x_1\ne x_2$ then
$$x_1\sim_Ex_2\ \Leftrightarrow\ \gamma(e_Ux_1)=\gamma(e_Ux_2),\  \text{and
if}\ x_1\in \mathcal T(U)\ \text{or}\ x_2\in \mathcal T(U),\
\gamma(e_Ux_1)=0.$$ Having found $E(U,\gamma)$ we now redefine
$$U_\gamma : =U/E(U,\gamma), \qquad
\alpha_{U,\gamma}:=\pi_{E(U,\gamma)}.$$ We arrive at the following
theorem.

\begin{thm}\label{thm1.12}
Let $U$ be a supertropical semiring, $e:=e_U$, $M:=eU,$ and assume
that $\gamma:M \to M'$ is a surjective  homomorphism  from $M$ to
a cancellative bipotent semidomain $M'.$ Then $E:=E(U,\gamma)$ can
be described as follows $(x_1,x_2\in~U):$
$$
\begin{array}{lll}
x_1\sim_E x_2\ \ & \text{iff}\ & x_1=x_2,\ \\ & & \text{or}\ \
\gamma(ex_1)=\gamma(ex_2),\ ex_1=x_1,\ ex_2=x_2,\\ & & \text{or}\
\ \gamma(ex_1)=\gamma(ex_2)=0.\end{array}
$$
\end{thm}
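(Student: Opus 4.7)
The strategy is to recycle the construction from the proof of \thmref{thm1.5}, apply \propref{prop1.10} on $U(v)$, and push the resulting explicit relation back from $U(v)$ to $U$. Since $M'$ is cancellative, $v := \gamma \circ \nu_U$ is a strict surjective valuation, and the proof of \thmref{thm1.5} identifies $\alpha_{U,\gamma}$ with $\pi_T \circ \vrp_v$, where $\vrp_v: U \to U(v)$ is the initial supervaluation covering $v$ from \cite[\S4]{IKR1} and $T$ is the finest MFCE-relation on $U(v)$ with $f := \vrp_v(e_U)$ identified to $e := e_{U(v)}$. A direct check gives $ef = \nu_{U(v)}(f) = v(e_U) = e$, so \propref{prop1.10}(b), applied to the idempotent $f$ of $U(v)$, yields the description
$$y_1 \sim_T y_2 \iff y_1 = y_2, \text{ or } y_1, y_2 \in fU(v) \text{ and } e y_1 = e y_2.$$

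Next I would translate this equivalence back to $U$ via $\vrp_v$ using the presentation
$$U(v) = (\htU \setminus \hat{\gq}) \,\dot\cup\, M',$$
recalled just before the theorem, where $\gq := \supp(v)$, $\vrp_v(x) = \htx$ for $x \notin \gq$, and $\vrp_v(x) = 0 \in M'$ for $x \in \gq$. Note that $fU(v) = (\htM \setminus \hat{\gq}) \cup M'$. For $x_1 \neq x_2$ in $U$, the condition $\vrp_v(x_1) \sim_T \vrp_v(x_2)$ then splits according to where the two images land. If both $\vrp_v(x_i)$ lie in $\htM \setminus \hat{\gq}$, then $x_1, x_2 \in M$ (so $ex_i = x_i$) and $e \vrp_v(x_1) = e \vrp_v(x_2)$ unpacks to $\gamma(ex_1) = \gamma(ex_2) \neq 0$. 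If both $\vrp_v(x_i)$ lie in $M'$, then $x_1, x_2 \in \gq$, i.e., $\gamma(ex_1) = \gamma(ex_2) = 0$. A pair with one image in $\htU \setminus \htM$ is excluded: such an image lies outside $fU(v)$, so its $T$-class is a singleton and it can only be equivalent to itself (forcing $x_1 = x_2$, contrary to assumption). A mixed pair with one image in $\htM \setminus \hat{\gq}$ and the other in $M'$ is excluded by $ey_1 = ey_2$, since the former has nonzero ghost companion while the latter has $ey_i = y_i$ possibly zero or in $M'$, and the two live in disjoint parts of $M'$.

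The main care will be in the case analysis in the second step: one has to verify that the tangibility marker ``$ex_i = x_i$'' in the second clause of the theorem coincides exactly with the statement ``$\vrp_v(x_i) \in \htM \setminus \hat{\gq}$ for $i=1,2$'', and that the third clause absorbs precisely the configurations in which both $x_i$ lie in $\supp(v)$. Once this bookkeeping is carried out, the three clauses of the theorem emerge, and \lemref{lem1.8} legitimizes the canonical redefinitions $U_\gamma := U/E(U,\gamma)$ and $\alpha_{U,\gamma} := \pi_{E(U,\gamma)}$ used in the theorem.
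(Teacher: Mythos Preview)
Your proposal is correct and follows essentially the same route as the paper: the argument preceding the theorem statement likewise invokes the proof of \thmref{thm1.5} to realize $\alpha_{U,\gamma}$ as $\pi_T\circ\varphi_v$, applies \propref{prop1.10} (using $ef=e$) to describe $T$ on $U(v)=(\htU\setminus\hat{\gq})\,\dot\cup\,M'$, and then pulls $T$ back along $\varphi_v$ to obtain the stated description of $E(U,\gamma)$. One small wording issue: in your mixed case with one image in $\htM\setminus\hat{\gq}$ and the other in $M'$, both ghost companions lie in $M'$, so the exclusion is not that they live in ``disjoint parts of $M'$'' but simply that $\varphi_v$ sends $\gq$ to $0$, forcing $ey_2=0$ while $ey_1=\gamma(x_1)\neq 0$.
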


\begin{schol}
 Thus this binary relation $E$ on $U$ is
a   multiplicative equivalence relation, and  the multiplicative
monoid $U/ E$  can be turned into a supertropical semiring in a
unique way such that $\pi_E: U\to U/E$ is a transmission. It is
the initial transmission covering $\gamma.$
\end{schol}

Most often $\pi_E$ is not a homomorphism, cf. \S\ref{sec:6} below.

\begin{thm}\label{thm1.13}
If $\gamma$ is surjective and $M'$ is a cancellative bipotent
semidomain, then $\alpha_{U,\gamma}$ is a pushout transmission.
\end{thm}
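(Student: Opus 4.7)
The plan is to exploit the explicit description of $E := E(U,\gamma)$ given in Theorem \ref{thm1.12} and show directly that any competing transmission $\beta\colon U\to W$ (together with $\delta\colon M'\to eW$ satisfying $\beta^\nu=\delta\circ\gamma$) factors through $\alpha := \alpha_{U,\gamma}= \pi_E$. Since $\alpha$ is surjective by \lemref{lem1.8}, uniqueness of the factorization $\eta\colon U_\gamma \to W$ is automatic, so the whole task reduces to (i) checking that $\beta$ is constant on $E$-classes, and (ii) verifying that the induced set-map $\eta$ is a transmission with $\eta^\nu = \delta$.

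For step (i), I take $x_1\sim_E x_2$ with $x_1\neq x_2$ and split into the two cases provided by \thmref{thm1.12}. In the first case $ex_i = x_i$, so $x_1,x_2\in M$ and $\gamma(x_1)=\gamma(x_2)$; then $\beta(x_i)=\beta^\nu(x_i)=\delta\gamma(x_i)$, and the two values coincide. In the second case $\gamma(ex_1)=\gamma(ex_2)=0$, hence $e\beta(x_i)=\beta(ex_i)=\beta^\nu(ex_i)=\delta\gamma(ex_i)=0$; using the defining property of a supertropical semiring that $ey=0$ forces $y=0$, I conclude $\beta(x_1)=0=\beta(x_2)$. Thus $\beta$ descends to a well-defined set-map $\eta\colon U_\gamma\to W$ with $\eta\circ\alpha=\beta$.

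For step (ii), the axioms TM1--TM4 for $\eta$ follow from the corresponding properties of $\beta$ together with the surjectivity and multiplicativity of $\alpha$: for $y_i=\alpha(x_i)\in U_\gamma$,
$$\eta(y_1y_2)=\beta(x_1x_2)=\beta(x_1)\beta(x_2)=\eta(y_1)\eta(y_2),$$
and $\eta(0)=\beta(0)=0$, $\eta(1)=\beta(1)=1$, $\eta(e)=\beta(e)=e$. For TM5, note that the ghost ideal of $U_\gamma$ is precisely $M'$ and every $m'\in M'$ has the form $m'=\gamma(m)=\alpha(m)$ for some $m\in M$; hence
$$\eta(m')=\beta(m)=\beta^\nu(m)=\delta\gamma(m)=\delta(m'),$$
so $\eta^\nu = \delta$, which is a semiring homomorphism by hypothesis. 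Uniqueness of $\eta$ follows at once from the surjectivity of $\alpha$.

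There is no real obstacle here; the work was already done in establishing \thmref{thm1.12}, which reduced the abstract universal property to a completely transparent equivalence relation on $U$. The only point one must be slightly careful with is the implication $ey=0\Rightarrow y=0$ in the target $W$, which is used decisively in the ``support'' case $\gamma(ex_i)=0$; this is guaranteed by the axioms of a supertropical semiring recalled in the introduction.
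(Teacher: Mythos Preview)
Your proof is correct and follows essentially the same approach as the paper's: use the explicit description of $E(U,\gamma)$ from \thmref{thm1.12} to check that $\beta$ is constant on $E$-classes via the same two-case split, then verify that the induced map is a transmission covering $\delta$. The only cosmetic difference is that you verify TM1--TM5 for $\eta$ by hand, whereas the paper invokes \cite[Proposition~6.1.ii]{IKR1} (a surjective transmission followed by a map whose composite is a transmission forces the latter to be a transmission); both amount to the same verification.
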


\begin{proof}
Let $\alpha:=\alpha_{U,\gamma}=\pi_E: U\to U/E$ with
$E:=E(U,\gamma).$ Assume that $\delta: M'\to N$ is a homomorphism
from~$M'$ to a bipotent semiring $N$ and $\beta: U\to V$ is a
transmission covering $\delta\gamma: M\to N,$ i.e., with
$e_V\beta=\delta\gamma e_U.$ (In particular $eV=N.)$

We want to verify that $\beta$ respects the equivalence relation
$E,$ i.e., given $x_1,x_2\in U,$ that
$$x_1\sim_Ex_2\quad\text{implies}\quad\beta(x_1)=\beta(x_2).$$

We may assume that $x_1\ne x_2.$ If $x_1$ or $x_2$ is tangible
then $\gamma(ex_1)=\gamma(ex_2)=0;$ hence
$e_V\beta(x_i)=\delta\gamma(ex_i)=0$ for $i=1,2.$ This implies
$\beta(x_1)=\beta(x_2)=0.$ Assume now that both $x_1$ and $x_2$
are ghost. Then $\gamma(ex_1)=\gamma(ex_2);$ hence
$\delta\gamma(ex_1)=\delta \gamma(ex_2),$ i.e.,
$e_V\beta(x_1)=e_V\beta(x_2).$ But both $\beta(x_1)$ and
$\beta(x_2)$ are ghost or zero. Thus $\beta(x_1)=\beta(x_2)$
again.

Since $\alpha$ is surjective, it follows that we have a
well-defined map $\rho: U/E\to V$ with $\beta=\rho\alpha.$ Now
\cite[Proposition 6.1.ii]{IKR1} tells us that $\rho$ is a
transmission, since both $\alpha$ and $\beta$ are transmissions
and $\alpha$ is surjective. We have
$$\nu_V\rho\alpha=\nu_V\beta = \delta \gamma \nu_U =\delta \nu_{U/E}\alpha.$$
Since $\alpha$ is surjective, this implies that $\nu_V\rho=\delta
\nu_{U/E},$ i.e., $\rho$ covers $\delta.$ The pushout property of
$\alpha$ is verified.\end{proof}

\begin{remark} If $\gm$ is surjective, but $M'$ is not assumed to
be cancellative, we have a description of $E(U,\gm)$ in
\cite[\S4]{IKRMon}, which is nearly as explicit as the description
above in Theorem \ref{thm1.12}, but then often $\al_{U,\gm}$ is
not a pushout transmission.
\end{remark}

Assume now that $U$ is \bfem{any} supertropical semiring, $M:=eU,$
and $\gamma: M\to M'$ is an \bfem{injective} semiring homomorphism
from $M$ to a bipotent semiring $M'.$ Then Problem \ref{prob1.1}
can be solved affirmatively in an easy direct way, as we explicate
 now.

We may assume, without loss of generality, that $M$ is a
subsemiring of $M'$ and $\gamma$ is the inclusion from $M$ to
$M'.$ We define a semiring $U'$ as follows. As a set, $U'$ is the
disjoint union of the sets $U$ and $M'\setminus M.$ We have
$U\subset U',$ $M'\subset U',$\ $U\cup M'=U',$ $U\cap M'=M.$
Let~$\nu$ denote the ghost map from $U$ to $M,$ $\nu=\nu_U.$ We
define  addition and multiplication on $U$ by taking the given
addition and multiplication on~$U$ and on~$M',$ and putting
$$x\cdot z=z\cdot x=\nu(x)\cdot z $$
$$x+
z=z+ x= \left\{
\begin{array}{lll}
  x  & \text{if} & \nu(x) > z\\[1mm]
  z  & \text{if} & \nu(x) \leq z\\
\end{array} \right.
$$ for $x\in U,$ $z\in M'.$ In the cases that
$x\in M$ and $z\in M',$ or $x\in U$ and $z\in M,$ these new
products are the same as the ones in $M'$ or $U,$ respectively.
Thus we have well-defined operations $\cdot$ and $+$ on $U'.$ One
checks in any easy and straightforward way that they obey all of
the  semiring axioms. Thus $U'$ is now a commutative semiring with
$1_{U'}=1_U.$ It clearly obeys the axioms (3.3$'$), (3.3$''$),
(3.3) in \cite{IKR1}. Thus $U'$ is supertropical. We have
$e_{U'}=e_U,$ $eU'=M',$ $\mathcal T(U')=\mathcal T (U).$

\begin{defn}\label{defn1.14}
We call $U'$ the supertropical  semiring \bfem{obtained from} $U$
\bfem{by extension of the ghost ideal} $M$ \bfem{to} $M'.$ We also
say, more briefly, that $U'$ is a \bfem{ghost extension of}~$U.$

\end{defn}

Let $\alpha$ denote the inclusion $U\hookrightarrow U'.$ It is
obvious that $\alpha$ is a transmission covering the inclusion
$\gamma: M\hookrightarrow M'.$ We verify that $\alpha$ is a
pushout transmission.

Let $\delta: M'\to N$ be a homomorphism from $M'$ to a bipotent
semiring $N$ and $\beta: U\to V$ a transmission covering
$\delta\gamma.$ This means that $eV=N,$ and
\begin{enumerate}
    \item $\beta(x) = \delta(x)$ for $x \in M.$
\end{enumerate}
 Clearly, we have a unique well-defined map
$\rho: U'\to V$ with $\rho|U=\beta$ and
\begin{enumerate}
    \item[(2)] $\rho(x) = \delta(x)$ for $x \in M'.$
\end{enumerate}
 We
have $\rho(0)=0,$ $\rho(1)=1,$ $\rho(e_{U'})=e_V.$ One checks
easily that $\rho$ is multiplicative.

We now know that $\rho$ is a transmission covering $\delta.$ We
have proved the following theorem.

\begin{thm}\label{thm1.15}
Assume that $M'$ is a bipotent semiring and $M$ is a subring of
$M'.$ Assume further that $U$ is a supertropical semiring with
ghost ideal $M,$ and $U'$ is the supertropical semiring obtained
from $U$ by extension of the ghost ideal $M$ to $M'.$ Then the
inclusion mapping $U\to U'$ is a pushout tranmission covering the
inclusion mapping $M\hookrightarrow M'.$\end{thm}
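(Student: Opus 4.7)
The plan is to follow the construction that is already given in the text immediately before the theorem: we take $U'$ to be the disjoint union of $U$ and $M'\sm M$ with the multiplication and addition specified there. Without loss of generality we may identify $M$ with its image in $M'$ so that $\gm$ is literally the inclusion. First I would verify (by a routine case distinction on whether each argument lies in $U$ or in $M'$) that the operations on $U'$ satisfy the semiring axioms and the supertropical axioms (3.3$'$), (3.3$''$), (3.3) of \cite{IKR1}; this is the checking alluded to in the construction, with $e_{U'}=e_U$, $eU'=M'$, and $\tT(U')=\tT(U)$. Then the inclusion $\al\colon U\hookrightarrow U'$ trivially sends $0,1,e$ to themselves and is multiplicative, and its ghost part is the inclusion $\gm\colon M\hookrightarrow M'$, so $\al$ is a transmission covering $\gm$.

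Next I would verify the universal property. Let $\dl\colon M'\to N$ be a semiring homomorphism and $\bt\colon U\to V$ a transmission with $e_V\bt=\dl\gm\,\nu_U$. Because $U'=U\cup M'$ with $U\cap M'=M$ and the restrictions $\bt|_M$ and $\dl|_M=\dl\gm$ agree (both equal $\bt^\nu|_M$), there is a \emph{unique} set map $\rho\colon U'\to V$ satisfying $\rho|_U=\bt$ and $\rho|_{M'}=\dl$. This forces uniqueness of any transmission extending $\bt$ and covering $\dl$, so the only task is to show that this $\rho$ is a transmission.

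The properties $\rho(0)=0$, $\rho(1)=1$, $\rho(e_{U'})=e_V$ come for free from $\bt$, and TM5 (the homomorphism condition on the ghost ideal) is exactly the hypothesis that $\dl$ is a semiring homomorphism. The real content is multiplicativity (TM3). When both arguments lie in $U$ it follows from multiplicativity of $\bt$; when both lie in $M'$ it follows from multiplicativity of $\dl$. The one nontrivial case, and the main obstacle, is a mixed product $x\cdot z$ with $x\in U$ and $z\in M'\sm M$. By the definition of multiplication in $U'$ we have $x\cdot z=\nu_U(x)\cdot z\in M'$, hence
\[
\rho(x\cdot z)=\dl(\nu_U(x)\cdot z)=\dl(\nu_U(x))\cdot\dl(z).
\]
On the other hand $\rho(x)\rho(z)=\bt(x)\cdot\dl(z)$; since $\dl(z)\in N=eV$ is ghost, the general supertropical identity $y\cdot g=\nu_V(y)\cdot g$ (for $y\in V$, $g\in eV$) gives
\[
\bt(x)\cdot\dl(z)=\nu_V(\bt(x))\cdot\dl(z)=\bt^\nu(\nu_U(x))\cdot\dl(z)=\dl\gm(\nu_U(x))\cdot\dl(z)=\dl(\nu_U(x))\cdot\dl(z),
\]
where the second equality uses $\nu_V\circ\bt=\bt^\nu\circ\nu_U$ and the third uses the covering hypothesis $\bt^\nu=\dl\gm$. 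Both sides agree, so $\rho$ is multiplicative. Finally $\nu_V\circ\rho=\dl\circ\nu_{U'}$ is immediate by checking separately on $U$ and on $M'\sm M$, so $\rho$ covers $\dl$, completing the verification of the pushout property.
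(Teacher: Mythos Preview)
Your proposal is correct and follows essentially the same approach as the paper: construct $U'$ as $U\cup(M'\sm M)$, check it is supertropical, define $\rho$ by gluing $\bt$ and $\dl$ along $M$, and verify that $\rho$ is a transmission covering $\dl$. The paper merely asserts ``one checks easily that $\rho$ is multiplicative''; you actually carry out the mixed case $x\in U$, $z\in M'\sm M$ using the ghost-absorption identity $y\cdot g=\nu_V(y)\cdot g$ for $g\in eV$, which is exactly the right computation.
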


Combining Theorems \ref{thm1.13} and \ref{thm1.15}, we obtain the
most comprehensive solution of Problem~\ref{prob1.1} that we can
offer in this section.

\begin{thm}\label{thm1.16}\footnote{In \S5 and \cite[\S1]{IKRMon} we will meet pushout transmissions
which are not covered by this theorem.} Let $\gamma: M\to M'$ be a
homomorphism between bipotent semirings, and assume that the
bipotent semiring $\gamma(M)$ is cancellative. $\{$N.B. This holds
if $M'$ is cancellative.$\}$ Let $U$ be a supertropical semiring
with $eU=M.$ Then $\alpha_{U,\gamma}: U\to U_\gamma$ is a pushout
transmission.
\end{thm}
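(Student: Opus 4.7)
The plan is to factor $\gamma$ through its image and reduce to the two cases we already have in hand. Set $M_0 := \gamma(M)$, which is cancellative bipotent by hypothesis, and write $\gamma = j \circ \gamma_0$, where $\gamma_0 : M \twoheadrightarrow M_0$ is the surjection induced by $\gamma$, and $j : M_0 \hookrightarrow M'$ is the inclusion. By \thmref{thm1.13} applied to the surjective homomorphism $\gamma_0$, the initial transmission $\alpha_1 := \alpha_{U,\gamma_0} : U \to U_1 := U_{\gamma_0}$ is a pushout transmission covering $\gamma_0$; in particular, $eU_1 = M_0$. Now \thmref{thm1.15}, applied with $U_1$ in place of $U$ and the inclusion $j$ in place of $\gamma$, yields the ghost extension $\alpha_2 : U_1 \hookrightarrow U_2$, a pushout transmission covering~$j$.

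The next step is the pasting argument: I claim that $\alpha := \alpha_2 \circ \alpha_1 : U \to U_2$ is a pushout transmission covering $\gamma$. Let $\beta : U \to V$ be a transmission and $\delta : M' \to eV$ a semiring homomorphism with $\beta^\nu = \delta \circ \gamma = (\delta \circ j) \circ \gamma_0$. Applying the pushout property of $\alpha_1$ to $\beta$ and $\delta \circ j : M_0 \to eV$, there exists a unique transmission $\eta_1 : U_1 \to V$ with $\eta_1 \circ \alpha_1 = \beta$ and $\eta_1^\nu = \delta \circ j$. Since $\alpha_2$ covers $j$, we have $\eta_1^\nu = \delta \circ \alpha_2^\nu$, so the pushout property of $\alpha_2$ yields a unique $\eta : U_2 \to V$ with $\eta \circ \alpha_2 = \eta_1$ and $\eta^\nu = \delta$. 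Then $\eta \circ \alpha = \eta \circ \alpha_2 \circ \alpha_1 = \eta_1 \circ \alpha_1 = \beta$. Uniqueness of $\eta$ follows by running the same argument in reverse: any candidate $\eta'$ forces $\eta' \circ \alpha_2 = \eta_1$ by the uniqueness clause for $\alpha_1$, and then $\eta' = \eta$ by the uniqueness clause for $\alpha_2$.

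Finally, to pass from this composite to $\alpha_{U,\gamma}$ itself, observe that every pushout transmission is in particular initial (take $W = V$ and $\delta$ the identity in Definition \ref{defn1.2}). Since initial transmissions covering $\gamma$ are unique up to isomorphism over $U$ and $M'$, the pushout transmission $\alpha_2 \circ \alpha_1$ is isomorphic to $\alpha_{U,\gamma}$; hence $\alpha_{U,\gamma}$ inherits the pushout property, completing the proof. The main obstacle is the pasting verification in the middle paragraph: one must carefully track that $\eta_1^\nu$ coming out of the first pushout really equals $\delta \circ \alpha_2^\nu$, so as to feed it into the second pushout — but this is automatic once one recalls $\alpha_2^\nu = j$ by construction.
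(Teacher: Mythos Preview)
Your proof is correct and follows the same approach as the paper: factor $\gamma$ as a surjection onto $\gamma(M)$ followed by an inclusion, invoke Theorems~\ref{thm1.13} and~\ref{thm1.15} for the two pieces, paste the resulting pushout squares, and then transfer the pushout property to $\alpha_{U,\gamma}$ via the uniqueness of initial coverings. The only cosmetic difference is that the paper cites the pasting lemma from Mac~Lane (\cite[p.~72, Exerc.~8]{ML}) for the composition of pushout squares in STROP, whereas you verify the universal property of the composite by hand; your direct verification is fine and arguably more self-contained.
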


\begin{proof}
We have a factorization $\gamma=i\circ\bar\gamma,$ with
$\bar\gamma$ the map $x\mapsto \gamma(x)$ from $M$ to the
subsemiring $\gamma(M)$ of $M'$, and $i$ the inclusion from
$\gamma(M)$ to $M'.$ By Theorems \ref{thm1.13} and \ref{thm1.15}
there exist pushout transmissions $\alpha: U\to \overline U$ and
$\beta: \overline U\to U'$ covering $\bar \gamma$ and $i$,
respectively. Now look at the commutative diagram
$$
\begin{xy}
\xymatrix{   U   \ar@{>}[r]_\alpha  & \overline U\ar@{>}[r]_\beta & U'  \\
 M \ar  [u] \ar@{>>}[r]_ {\bar
 \gamma} & \gamma(M)
   \ar@{^{(}->}[r]_{i} \ar [u] &
  M' \ar  [u]
}
\end{xy}
$$
where the vertical arrows denote inclusions. Here the left and the
right square are pushout diagrams in the category STROP of
 supertropical semirings and transmissions. Thus also the outer
 rectangle is a pushout in this category (cf., e.g.,
 \cite[p.72, Execr.8]{ML}), i.e., $\beta\alpha$ is a pushout transmission.
 If $\alpha_{U,\gamma}: U\to U_\gamma$ is any prechosen initial
 covering of $\gamma,$  there exists an isomorphism $\rho:
 U'\to U_\gamma$ over $M'$ with
 $\rho\beta\alpha=\alpha_{U,\gamma}.$ Thus also $\alpha_{U,\gamma}$
 is a pushout transmission.
\end{proof}

\section{Pushouts of tangible supervaluations }\label{sec:2}

If $\vrp: R \to U$ and $\psi:R \to V$  are supervaluation on a
semiring $R$, and  $\vrp$  dominates $\psi$,  then we also say
that $\psi$ is a \textbf{coarsening} of $\vrp$ . Recall that this
happens iff there is a transmission $\al : U \to V$ with $\psi =
\al \circ \vrp $. If in addition $\vrp$ is surjective,   i.e., $U
= \vrp(R) \cup e \vrp(R)$, which is no essential loss of
generality, then $\al$ is uniquely determined by $\vrp$    and
$\psi$,  and we write $\al = \al_{\psi,\vrp}$ (cf.
\cite[\S5]{IKR1}).

Assume now that $v: R \to M$ is a surjective \m-valuation and
$\vrp: R \to U$ is a surjective supervaluation covering $v$ (in
particular $M = eU$). Moreover, let $\gm: M \to N$ be a surjective
homomorphism to another (bipotent) semiring $N$.

\begin{defn}\label{defn2.1} We say that a surjective
supervaluation  $\psi: R \to V$ is the \textbf{initial coarsening
of $\vrp$  along $\gm$}, if $\psi$ is a coarsening of $\vrp$ and
$\al_{ \psi, \vrp}$ is the initial transmission covering $\gm$
(cf. Definition \ref{defn1.3}). In the notation \ref{notation1.7};
which we will  obey in the following, this means that $V = U_\gm$
and $ \al_{\psi,\vrp} =  \al_{U, \gm}$.We then write $\psi =
\pusho{\gm}
 (\vrp)$.
\end{defn}

In this way we obtain a map $$ \pusho{\gm}  : \Cov(v) \to \Cov(\gm
v)$$ between complete lattices.

[We could define such a map $\pusho{\gm}$  also if $\gm : M \to N$
is not necessarily surjective. But in the present section this
will give no additional insight.]

\vskip 3mm \emph{In the following, we will tacitly assume that all
occurring supervaluations are surjective.}

We write down a functional property of the initial transmissions
$\al_{U,\gm} $, which will give us simple properties of the maps
$\pusho{\gm}$. The map $\gm : M \to N$  is always assumed to be a
surjective homomorphism between bipotent semirings (as before).

\begin{prop}\label{prop2.2}
Let $U$ and $V$ be supertropical semirings with $eU = e V = M$ and
let $\lm: U \to V$ be a transmission over $M$, hence a
homomorphism\footnote{Any transmission $U \to W$, which is
injective on $eU$, is a homomorphism, cf. \cite[Proposition
5.10.iii]{IKR1}.}.
\begin{enumerate} \ealph
    \item Then there exists a unique  transmission from $U_\gm $
    to $V_\gm$ over $N$, denoted by $ \lm_\gm$, such that
    $$ \lm_\gm \circ \al_{U,\gm} \   = \  \al_{V, \gm} \circ \lm.$$
We thus  have a commuting diagram
$$\xymatrix{
           V  %
                \ar[rr]^{\al_{V,\gm}}    &&
            V_\gm
       \\
      U  \ar[u]_{\lm}   \ar[rr]^{\al_{U,\gm}}   &&        U_\gm \ar[u]_{\lm_\gm}
       \\
      M  \ar@{^{(}->}[u]   \ar[rr]^{\gm}   &&        N \ar@{^{(}->}[u]
 }$$
with inclusion mappings $M \hookrightarrow U$ and $N
\hookrightarrow U_\gm$.

    \item If $ \xi :  V  \to W $ is a second homomorphism over $M$
    then
    $$ \xi_\gm \lm_\gm  \ = \ (\xi \lm )_\gm .$$
\end{enumerate}
\end{prop}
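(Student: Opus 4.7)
The proof plan is to exploit the universal property of $\alpha_{U,\gm}$ as an initial transmission covering $\gm$ (Definition~\ref{defn1.3}, Theorem~\ref{thm1.5}), plus the surjectivity of $\alpha_{U,\gm}$ (Lemma~\ref{lem1.8}) which follows from our standing assumption that $\gm$ is surjective.

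For part (a), I first form the composition $\beta := \al_{V,\gm} \circ \lm : U \to V_\gm$. Each factor is a transmission (a homomorphism is a transmission, and $\al_{V,\gm}$ is one by construction), so $\beta$ is a transmission. Its ghost part is $\beta^\nu = \al_{V,\gm}^\nu \circ \lm^\nu = \gm \circ \mathrm{id}_M = \gm = \al_{U,\gm}^\nu$, and its target $V_\gm$ satisfies $eV_\gm = N = eU_\gm$. The initial property of $\al_{U,\gm}$ then yields a unique semiring homomorphism $\lm_\gm : U_\gm \to V_\gm$ over $N$ with $\lm_\gm \circ \al_{U,\gm} = \beta$. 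Since homomorphisms are transmissions, $\lm_\gm$ is the desired transmission. For uniqueness \emph{as a transmission} over $N$ (rather than as a homomorphism, which is what the initial property directly furnishes), I invoke surjectivity of $\al_{U,\gm}$: any two transmissions $U_\gm \to V_\gm$ that agree after precomposition with the surjection $\al_{U,\gm}$ must coincide.

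For part (b), both $\xi_\gm \lm_\gm$ and $(\xi\lm)_\gm$ are transmissions $U_\gm \to W_\gm$ over $N$. By part (a) applied to the homomorphism $\xi\lm$, the transmission $(\xi\lm)_\gm$ is characterized uniquely (among transmissions over $N$) by the identity $(\xi\lm)_\gm \circ \al_{U,\gm} = \al_{W,\gm} \circ (\xi\lm)$. Hence it suffices to verify the same identity for $\xi_\gm \lm_\gm$, which is an immediate diagram chase using the defining equations of $\lm_\gm$ and $\xi_\gm$:
\[
(\xi_\gm \lm_\gm) \circ \al_{U,\gm} = \xi_\gm \circ (\al_{V,\gm} \circ \lm) = (\al_{W,\gm} \circ \xi) \circ \lm = \al_{W,\gm} \circ (\xi\lm).
\]
Uniqueness then forces $\xi_\gm \lm_\gm = (\xi\lm)_\gm$.

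There is no substantive obstacle here; the only subtle point is the distinction between the universal property stated for homomorphisms in Definition~\ref{defn1.3} and the uniqueness clause in (a), which is formulated for transmissions. Bridging this gap is precisely the role of Lemma~\ref{lem1.8} (surjectivity of $\al_{U,\gm}$ when $\gm$ is surjective), and it is already implicit in the standing hypotheses of the proposition.
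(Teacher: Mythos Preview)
Your proof is correct and follows essentially the same route as the paper: form $\al_{V,\gm}\circ\lm$, observe it is a transmission covering $\gm$, and invoke the universal property of $\al_{U,\gm}$; part (b) is the same diagram chase. One minor remark: the subtlety you flag about uniqueness among transmissions versus homomorphisms is already dissolved by the footnote in the proposition itself (any transmission over $N$ has identity ghost part, hence is injective on ghosts, hence is a homomorphism by \cite[Proposition~5.10.iii]{IKR1}), so the appeal to surjectivity via Lemma~\ref{lem1.8} is not actually needed, though it does no harm.
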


\begin{proof} a): $\al_{V,\gm} \lm : U \to V_\gm$ is a transmission
covering $\gm$. Now use the universal property of the initial
transmission $\al_{U,\gm}$.

b):  $\xi_\gm  \lm _\gm : U_\gm \to W_\gm$  is a transmission over
$N$ such that $$ \xi_\gm  \lm _\gm \al_{U,\gm} =   \xi_\gm
\al_{V,\gm} \lm = \al_{W,\gm} \xi \lm.$$ By the uniqueness  part
in a) we conclude that $ \xi_\gm  \lm _\gm  = (\xi \lm) _\gm$.
\end{proof}

As an immediate consequence of part b) we have

\begin{cor} \label{cor2.3}
The map $\pusho{\gm} : \Cov(v) \to \Cov (\gm v)$  is order
preserving in the weak sense, i.e., $\vrp \geq \psi$ implies
$\pusho{\gm}(\vrp) \geq \pusho{\gm}(\psi)$.
 \hfill\quad\qed

\end{cor}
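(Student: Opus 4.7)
The plan is an immediate application of Proposition~\ref{prop2.2}. Given classes in $\Cov(v)$ with $\vrp \geq \psi$, we pick surjective representatives $\vrp: R \to U$ and $\psi: R \to V$ and let $\lm = \al_{\psi,\vrp}: U \to V$ be the transmission furnished by dominance, so that $\psi = \lm \circ \vrp$. Since $\vrp$ and $\psi$ both cover $v$, we have $eU = eV = M$ and $\lm^\nu$ equals the identity on $M$; in particular $\lm$ is a transmission over $M$ in the sense of Proposition~\ref{prop2.2} (and hence, by the footnote there, automatically a semiring homomorphism).

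Part (a) of that proposition then yields a transmission $\lm_\gm: U_\gm \to V_\gm$ over $N$ satisfying $\lm_\gm \circ \al_{U,\gm} = \al_{V,\gm} \circ \lm$. Composing both sides on the right with $\vrp$ and using $\lm \circ \vrp = \psi$, one obtains
$$\lm_\gm \circ \pusho{\gm}(\vrp) \ = \ \lm_\gm \circ \al_{U,\gm} \circ \vrp \ = \ \al_{V,\gm} \circ \lm \circ \vrp \ = \ \al_{V,\gm} \circ \psi \ = \ \pusho{\gm}(\psi).$$
Thus $\lm_\gm$ is a transmission exhibiting $\pusho{\gm}(\vrp) \geq \pusho{\gm}(\psi)$ in $\Cov(\gm v)$, which is the desired weak order preservation.

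The argument is a direct diagram chase and no real obstacle arises. One should, however, verify that $\pusho{\gm}$ is well-defined on equivalence classes in $\Cov(v)$; applying the construction above to both $\vrp \geq \psi$ and $\psi \geq \vrp$ produces transmissions $\lm_\gm$ and $\mu_\gm$ over $N$, and part (b) of Proposition~\ref{prop2.2} (together with the uniqueness clause of part (a) applied to $\id_U$ and $\id_V$) shows these to be mutually inverse isomorphisms. This is presumably why the author attributes the corollary to part (b) in particular, rather than to part (a) where the witnessing transmission $\lm_\gm$ actually originates.
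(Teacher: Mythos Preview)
Your proof is correct and matches the paper's approach: the paper states the corollary as an ``immediate consequence'' of Proposition~\ref{prop2.2} without giving details, but your computation $\lm_\gm \circ \pusho{\gm}(\vrp) = \al_{V,\gm}\circ\lm\circ\vrp = \pusho{\gm}(\psi)$ is reproduced verbatim in the paper's proof of the next corollary (Corollary~\ref{cor2.4}), which identifies $\lm_\gm$ as $\al_{\pusho{\gm}(\psi),\pusho{\gm}(\vrp)}$. Your closing remark is also apt: part~(a) alone already produces the witnessing transmission, and part~(b) is only needed (if at all) for the well-definedness of $\pusho{\gm}$ on equivalence classes, which you handle correctly.
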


\begin{cor} \label{cor2.4}
If $\vrp: R \to U$ and $\psi: R \to V $ are supervaluations
covering $v$ (in particular $eU = e V =M$) with $\vrp \geq \psi$
then
$$ \al_{\pusho{\gm}(\psi), \pusho{\gm} (\vrp)} \ = \ (\al_{\psi,\vrp})_\gm.$$
\end{cor}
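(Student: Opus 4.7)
The plan is to derive the identity from Proposition 2.2(a) by a direct composition argument, using surjectivity to invoke the uniqueness of the transmission $\al_{\psi,\vrp}$.

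First I would observe that since $\vrp$ and $\psi$ both cover the same \m-valuation $v$, the transmission $\al_{\psi,\vrp}: U \to V$ witnessing $\vrp \geq \psi$ is over $M$ (that is, $\al_{\psi,\vrp}^{\nu} = \id_M$). In particular $\al_{\psi,\vrp}^{\nu}$ is injective, so $\al_{\psi,\vrp}$ is a homomorphism by \cite[Proposition 5.10.iii]{IKR1}. This puts us in the setting of \propref{prop2.2}, applied with $\lm = \al_{\psi,\vrp}$, yielding a transmission $(\al_{\psi,\vrp})_\gm: U_\gm \to V_\gm$ over $N$ with
$$(\al_{\psi,\vrp})_\gm \circ \al_{U,\gm} \ = \ \al_{V,\gm} \circ \al_{\psi,\vrp}.$$

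Next I would compose with $\vrp$ on the right. By definition of $\pusho{\gm}$ (Definition \ref{defn2.1}) we have $\pusho{\gm}(\vrp) = \al_{U,\gm} \circ \vrp$ and $\pusho{\gm}(\psi) = \al_{V,\gm} \circ \psi$. Using $\psi = \al_{\psi,\vrp} \circ \vrp$, the previous display gives
$$(\al_{\psi,\vrp})_\gm \circ \pusho{\gm}(\vrp) \ = \ (\al_{\psi,\vrp})_\gm \circ \al_{U,\gm} \circ \vrp \ = \ \al_{V,\gm} \circ \al_{\psi,\vrp} \circ \vrp \ = \ \al_{V,\gm} \circ \psi \ = \ \pusho{\gm}(\psi).$$
Thus $(\al_{\psi,\vrp})_\gm$ is a transmission $U_\gm \to V_\gm$ that carries $\pusho{\gm}(\vrp)$ to $\pusho{\gm}(\psi)$.

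Finally I would appeal to uniqueness. Because $\vrp: R \to U$ is surjective and $\al_{U,\gm}: U \to U_\gm$ is surjective (by \lemref{lem1.8}, since $\gm$ is surjective), the composition $\pusho{\gm}(\vrp) = \al_{U,\gm} \circ \vrp$ is surjective onto $U_\gm$. Hence the transmission from $U_\gm$ to $V_\gm$ taking $\pusho{\gm}(\vrp)$ to $\pusho{\gm}(\psi)$ is uniquely determined, and equals $\al_{\pusho{\gm}(\psi), \pusho{\gm}(\vrp)}$ by definition (\cite[\S5]{IKR1}). Comparing identifies it with $(\al_{\psi,\vrp})_\gm$, giving the claim. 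The only subtlety is checking this surjectivity, which is immediate from \lemref{lem1.8}; the rest is diagram chasing.
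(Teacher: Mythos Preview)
Your proof is correct and follows essentially the same route as the paper: set $\lambda := \al_{\psi,\vrp}$, apply Proposition~\ref{prop2.2}(a) to obtain $\lambda_\gm \circ \al_{U,\gm} = \al_{V,\gm} \circ \lambda$, and compose with $\vrp$ to conclude $\lambda_\gm \circ \pusho{\gm}(\vrp) = \pusho{\gm}(\psi)$. The paper leaves the surjectivity/uniqueness step implicit in its final sentence, whereas you spell it out via Lemma~\ref{lem1.8}; this is a welcome clarification but not a different argument.
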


\begin{proof}
We have $\psi = \lm \vrp$ with $\lm := \al_{\psi,\vrp}$.  From
this we conclude that
$$ \pusho{\gm}(\psi) \ = \ \al_{V,\gm} \lm \vrp \ = \
\lm_\gm \al _{U,\gm} \vrp \ = \ \lm_\gm \pusho{\gm}(\vrp).$$ Thus
$\lm_\gm$ is the transmission from $\pusho{\gm}(\vrp)$ to
$\pusho{\gm}(\psi)$.
\end{proof}

Starting from now \emph{we assume that the bipotent semirings $M$
and $N$ are cancellative}; hence $v: R \to M$ and $\gm v : R \to
N$ are valuations. We define
$$\gp  := \gm ^{-1}(0), \qquad \frak{q}
:= v^{-1} (0) = \supp(v), \qquad  \frak{q}' := v^{-1} (\gp) =
\supp(\gm v).$$
Notice that $\gp$, $\frak{q}$, $\frak{q}'$ are prime ideals of $M$
and $R$, respectively.

Given any supertropical semiring $U$ with $e U = M$, we now know
that $\al_{U, \gm}: U \to U_\gm$ is a pushout transmission
(Theorem \ref{thm1.13}). Consequently, if $\vrp \in \Cov(v)$, we
now call $\pusho{\gm} (\vrp)$ the \textbf{pushout of $\vrp$ along
$\gm$} (instead of ``initial coarsening of $\vrp$ along $\gm$'').

The good thing  is that we now have an explicit descriptions of
$U_\gm$ and $\al_{U,\gm}$ which we recall from Theorem
\ref{thm1.12}.

We start with a multiplicative equivalence relation $E(U, \gm)$ on
$U$ established in Theorem~\ref{thm1.12}.  To repeat, for $x,y$ in
$U$
$$
\begin{array}{lll}
   x \sim_{E(U,\gm)} y  &  \ \Longleftrightarrow \ &\text{either $x=y$}, \\
   &  & \text{or both  $x,y \in M$ and $\gm(x) = \gm(y)$},  \\
   &  & \text{or $ex \in \gp$, $ey \in \gp$}. \\
\end{array}
 $$

The restriction $E(U,\gm)| M$ is the equivalence relation $E(\gm)$
given by $\gm: M \twoheadrightarrow N$. We identify every  class
$[x]_{E(U,\gm)}$, $x \in M$, with the image $\gm(x) \in N$ and
then have
$$ M/ E(U,\gm) = N.$$

As proved in \S\ref{sec:1}, we may choose\footnote{Recall that
$\al_{U,\gm}: U
 \to U_\gm$ is the solution of a universal problem.} $U_\gm = U /
 E(U,\gm)$ and then have
$$ \al _{U,\gm } = \pi_{E(U,\gm)} : x \longmapsto [x]_{E(U,\gm)}. $$

Let $x \in \tT(U)$. If $ex \notin \gp$, then $ [x]_{E(U,\gm)} = \{
x \}$, but if $ex \in \gp $, then $ [x]_{E(U,\gm)} = 0 \in N$.
Thus we see that $\tT(U_\gm) = U_\gm \setminus  N$  is the
bijective image of $\{ x \in \tT(U) \ | \ ex \notin \gp \}$. We
identify $ [x]_{E(U,\gm)}$ with $x$, if $x$ lies in this set, and
then have
$$
\tT(U_\gm) = \{ x \in \tT(U) \ | \ ex \notin \gp \}, \qquad U_\gm
= \{ x \in \tT(U) \ | \ ex \notin  \gp \} \ \dot\cup \ N.$$

Notice that the multiplicative monoid $\tT(U_\gm)$  has become a
submonoid of $\tT(U)$, since  $E(U,\gm)$ is multiplicative, but
the sum of two elements of $\tT(U_\gm)$, computed in the semiring
$U_\gm$, can be very different from their sum in $U$.

After all these identifications we have
\begin{lem}\label{lem2.5} For any $x \in U,$
$$ \al_{U,\gm}(x) \ = \ \left\{
\begin{array}{ll}
  x & \text{if } x \in \tT(U), ex \notin \gp,   \\[2mm]
  0 & \text{if } x \in \tT(U), ex \in \gp,  \\[2mm]
  {\gm}(x)  & \text{if } x \in M .\\
\end{array}
\right.
$$ \hfill\quad\qed
\end{lem}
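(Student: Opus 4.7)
The plan is to simply unpack the definition of $\alpha_{U,\gamma} = \pi_{E(U,\gamma)}$ in light of the two identifications made in the paragraphs immediately preceding the lemma, namely:
(i) for $x \in M$, the class $[x]_{E(U,\gamma)}$ is identified with $\gamma(x) \in N$, legitimate because the restriction of $E(U,\gamma)$ to $M$ equals $E(\gamma)$; and
(ii) for $x \in \mathcal{T}(U)$ with $ex \notin \mathfrak{p}$, the class $[x]_{E(U,\gamma)}$ is identified with $x$ itself, legitimate because the class is then a singleton.

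First I would handle the easy case $x \in M$: by definition $\alpha_{U,\gamma}(x) = [x]_{E(U,\gamma)}$, and by identification (i) this equals $\gamma(x)$, giving the third line of the formula.

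Next, for $x \in \mathcal{T}(U)$, I would read off the equivalence class from the explicit description of $E(U,\gamma)$ recalled from Theorem \ref{thm1.12}. If $x \sim_{E(U,\gamma)} y$ with $x \neq y$, then the definition forces either $\{x,y\} \subset M$ or $ex, ey \in \mathfrak{p}$. When $ex \notin \mathfrak{p}$, neither alternative holds (the first because $x \in \mathcal{T}(U)$ means $x \notin M$), so $[x]_{E(U,\gamma)} = \{x\}$; by identification (ii) we get $\alpha_{U,\gamma}(x) = x$. When $ex \in \mathfrak{p}$, we have $e \cdot 0 = 0 \in \mathfrak{p}$, so $x \sim_{E(U,\gamma)} 0$ by the third clause of the definition; hence $[x]_{E(U,\gamma)} = [0]_{E(U,\gamma)}$, which under identification (i) is $\gamma(0) = 0 \in N$.

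Since nothing nontrivial is asserted beyond assembling these three cases, there is essentially no obstacle; the only thing to be careful about is that the two identifications are compatible — i.e. that no tangible $x$ with $ex \notin \mathfrak{p}$ ever gets equated under $E(U,\gamma)$ to an element of $M$ — and this is immediate from the explicit form of $E(U,\gamma)$ since its non-trivial equivalences only occur within $M$ or within $\{y : ey \in \mathfrak{p}\}$.
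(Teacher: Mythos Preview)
Your proposal is correct and matches the paper's approach exactly: the paper presents this lemma with a \qed\ immediately after the statement, treating it as an immediate consequence of the identifications (i) and (ii) that you unpack. Your careful verification that the identifications are compatible is a nice touch, but nothing beyond what the paper intends by ``obvious after these identifications.''
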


Given a surjective valuation $v: R \to M$, as before, we denote by
$\tCov(v)$ the set of all (equivalence classes of) tangible
supervaluations covering $v$. It is an upper set, and hence a
complete sublattice of the lattice $\Cov(v)$ with the same top
element $\vrp_v: R \to U(v)$ as $\Cov(v)$ (cf. \cite[\S10]{IKR1}).

Let $D(M)$ denote the unique supertropical semiring $U$ such that
$eU= M$ and $\nu_U$ maps $\tT(U)$ bijectively onto  $M \sm \{ 0
\}$. The bottom element of $\tCov(v)$ is given by the unique
tangible supervaluation  $\htv: R \to D(M)$ covering $v$ (cf.
\cite[Example 9.16]{IKR1}).

Returning to an arbitrary covering $\vrp: R \to U$ of $v$, we read
off from Lemma \ref{lem2.5} the~$\gm_*(\vrp)$ is tangible if
$\vrp$ is tangible. This implies

\begin{prop}\label{prop2.6} $\pusho{\gm}(\tCov(v)) \subset \tCov(\gm
v)$.

\end{prop}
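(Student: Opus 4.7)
The plan is to unpack the definition of tangibility and apply the explicit description of $\al_{U,\gm}$ supplied by Lemma~\ref{lem2.5}. Recall that a supervaluation $\vrp: R \to U$ covering $v: R \to M$ is \emph{tangible} precisely when $\vrp(a) \in \tT(U)$ for every $a \in R$ with $v(a) \neq 0$. For the pushout $\pusho{\gm}(\vrp) = \al_{U,\gm} \circ \vrp : R \to U_\gm$ covering $\gm v$, tangibility amounts to showing $\al_{U,\gm}(\vrp(a)) \in \tT(U_\gm)$ whenever $(\gm v)(a) \neq 0$.

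So I would take an arbitrary $a \in R$ with $(\gm v)(a) = \gm(v(a)) \neq 0$. By definition of $\gp = \gm^{-1}(0)$, this says $v(a) \notin \gp$; in particular $v(a) \neq 0$, so the tangibility of $\vrp$ forces $\vrp(a) \in \tT(U)$. Moreover $e\vrp(a) = \nu_U(\vrp(a)) = v(a) \notin \gp$ (since $\vrp$ covers $v$).

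At this point the first clause of Lemma~\ref{lem2.5} applies directly: $\al_{U,\gm}(\vrp(a)) = \vrp(a)$, and under the identification made just before Lemma~\ref{lem2.5} this element lies in $\tT(U_\gm) = \{x \in \tT(U) \mid ex \notin \gp\}$. Hence $\pusho{\gm}(\vrp)(a) \in \tT(U_\gm)$, which is what tangibility of $\pusho{\gm}(\vrp)$ over $\gm v$ requires. Since the argument only invokes Lemma~\ref{lem2.5} and the defining property of tangibility, there is no real obstacle here -- the whole content of the proposition is already packaged into the description of $\al_{U,\gm}$ worked out at the end of \S\ref{sec:1}.
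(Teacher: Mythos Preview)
Your proof is correct and follows exactly the paper's approach: the paper simply states that one ``reads off from Lemma~\ref{lem2.5}'' that $\gm_*(\vrp)$ is tangible whenever $\vrp$ is, and your argument is precisely that reading spelled out in detail.
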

We further have the following important fact.

\begin{thm}\label{prop2.7} The pushout of the initial covering
$\vrp_v : R \to U(v)$ of $v$ is the initial covering $\vrp_{\gm
v}: R \to U(\gm v)$ of $\gm v$. In particular $U(\gm v) = U(v)_
\gm$.

\end{thm}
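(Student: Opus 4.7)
The plan is to verify that $\gm_*(\vrp_v)=\al_{U(v),\gm}\circ\vrp_v$ is itself the top (initial) element of $\Cov(\gm v)$. Uniqueness of the top element then yields $[\gm_*(\vrp_v)]=[\vrp_{\gm v}]$ and hence the identification $U(\gm v)=U(v)_\gm$ claimed in the theorem.

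It is immediate that $\gm_*(\vrp_v)$ lies in $\Cov(\gm v)$: since $\vrp_v$ covers $v$ and $\al_{U(v),\gm}$ covers $\gm$, the composition covers $\gm v$. (It lies even in $\tCov(\gm v)$ by \propref{prop2.6}, although this is not strictly needed.) The real content is to show that \emph{every} surjective supervaluation $\psi:R\to W$ covering $\gm v$ is dominated by $\gm_*(\vrp_v)$; since $\psi$ is surjective and $e_W\psi=\gm v$ surjects onto $N\sm\{0\}$, one automatically has $eW=N$. My plan is to first lift $\psi$ to a transmission $\bt:U(v)\to W$ covering $\gm$ with $\bt\circ\vrp_v=\psi$, and then invoke the pushout property of $\al_{U(v),\gm}$ given by \thmref{thm1.13} (applicable because $\gm$ is surjective onto the cancellative bipotent semidomain $N$) to produce a transmission $\eta:U(v)_\gm\to W$ over $N$ with $\bt=\eta\circ\al_{U(v),\gm}$. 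Then $\eta\circ\gm_*(\vrp_v)=\eta\circ\al_{U(v),\gm}\circ\vrp_v=\bt\circ\vrp_v=\psi$ exhibits the required dominance.

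To construct $\bt$, I use the explicit description of the initial covering from \cite[Example~4.5]{IKR1}, according to which $U(v)$ is the disjoint union of a tangible set $\{\htx\mid x\in R\sm\gq\}$ and the ghost ideal $M$, with $\vrp_v(x)=\htx$ for $x\in R\sm\gq$ and $\vrp_v(x)=0$ for $x\in\gq$. I define $\bt(\htx):=\psi(x)$ on tangibles, $\bt|_M:=\gm$ on the ghost ideal, and $\bt(0):=0$. For $x\in\gq$ one has $e_W\psi(x)=\gm v(x)=0$, and since in a supertropical semiring $ew=0$ forces $w=0$, indeed $\psi(x)=0$, so $\bt\circ\vrp_v=\psi$ on the nose. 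Because $M$ is cancellative, $x,y\in R\sm\gq$ forces $xy\in R\sm\gq$, so $\htx\cdot\hty=\widehat{xy}$ in $U(v)$ and multiplicativity of $\bt$ on tangible--tangible products reduces to that of $\psi$.

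The one point requiring genuine care, and in my view the main obstacle, is multiplicativity on mixed products $\bt(\htx\cdot z)$ with $z\in M$. Here $\htx\cdot z=\nu_{U(v)}(\htx)\cdot z=v(x)z\in M$, so $\bt(\htx\cdot z)=\gm(v(x)z)=\gm v(x)\cdot\gm(z)$, while $\bt(\htx)\bt(z)=\psi(x)\gm(z)$. Using the general supertropical identity $a\cdot b=\nu(a)\cdot b$ whenever $b$ is ghost, together with $\nu_W(\psi(x))=e_W\psi(x)=\gm v(x)$, both expressions coincide whether $\psi(x)$ is tangible or ghost in $W$. The remaining transmission axioms for $\bt$ (preservation of $0$, $1$, $e$, and the semiring homomorphism property of $\bt^\nu=\gm$) are built into the definition. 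With $\bt$ thereby a transmission covering $\gm$, the pushout property of $\al_{U(v),\gm}$ supplies $\eta$ and the argument closes.
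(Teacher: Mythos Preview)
Your proof is correct, but it takes a genuinely different route from the paper's own argument.

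The paper proceeds by direct computation. Having already set up the identifications $\tT(U_\gm)=\{x\in\tT(U)\mid ex\notin\gp\}$ and $U_\gm=\tT(U_\gm)\,\dot\cup\,N$ just before \lemref{lem2.5}, it simply observes that $\tT(U(v))=R\setminus\gq$ and $\tT(U(\gm v))=R\setminus\gq'$ with $\gq'=v^{-1}(\gp)$, so that $U(v)_\gm$ and $U(\gm v)$ coincide as sets; then it applies the formula of \lemref{lem2.5} to check that $\al_{U(v),\gm}(\vrp_v(a))$ agrees with $\vrp_{\gm v}(a)$ for every $a\in R$. This is a three-line verification once the explicit descriptions are in hand.

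Your approach is more categorical: you show that $\gm_*(\vrp_v)$ satisfies the universal property characterizing the top element of $\Cov(\gm v)$, by lifting an arbitrary $\psi\in\Cov(\gm v)$ to a transmission $\bt:U(v)\to W$ covering $\gm$ and then invoking the pushout property of $\al_{U(v),\gm}$ from \thmref{thm1.13}. The construction of $\bt$ and the verification of multiplicativity on mixed products (which you correctly identify as the delicate point) are carried out cleanly. What your argument buys is independence from the explicit coordinate identifications preceding \lemref{lem2.5}: it would work in any setting where one knows $U(v)$ abstractly and has the pushout property available. What the paper's argument buys is brevity and the literal equality $U(\gm v)=U(v)_\gm$ (not merely an isomorphism of equivalence classes), which is used in the subsequent diagrams of \S\ref{sec:2}.
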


\begin{proof}
Recall  that $\tT(U(v)) = R \setminus \gq$ and  $\tT(U(\gm v)) = R
\setminus \gq'$ with $\gq = \supp(v) $ and $\gq' = \supp (\gm v) =
v^{-1}(\gp)$. Thus it is fairly  obvious that $U(\gm v)=
U(v)_\gm$. If $a \in R$, we have
$$ \pusho{\gm}(\vrp_v)(a) \ = \ \al_{U,\gm}(\vrp_v(a));$$
hence, by Lemma \ref{lem2.5}, $ \pusho{\gm}(\vrp_v)(a) = \vrp(a)$
if $v(a) = e \vrp_v(a) \notin \gp$, while $ \pusho{\gm}(\vrp_v)(a)
= 0$ if $ v(a) \in \gp$. These are precisely the values attained
by $\vrp_{\gm v}$.
\end{proof}

We  focus on the restriction
$$ \tpusho{\gm}: \tCov(v) \to \tCov(\gm v)$$
of $\pusho{\gm}$ to tangible supervaluations. It maps the top
element $\vrp_v$ of $\tCov(\gm)$ to the top element $\vrp_{\gm v}$
of  $\tCov(\gm v)$. But it almost never  maps the bottom element
$\htv$ of $\tCov(v)$ to the bottom element $\widehat{\gm v}$ of
$\tCov(\gm v)$, as we will see below.

Our goal now is to exhibit a sublattice of $\tCov(v)$  which maps
bijectively onto $\tpusho{\gm}( \tCov(v))$ under the pushout map
$\tpusho{\gm}$. For that we need a construction of general
interest. \pSkip

In the following \emph{we always assume that $eU = M$ and $\tT(U)$
is closed under multiplication.} \pSkip

Given an ideal  $\ga$ of $M$ we introduce the equivalence relation
$$ \taE \ := \ E_{t,U}(\mfa) \ := \ \tE \cap E(M \setminus \ga), $$
with $\tE$ and $E(M \setminus \ga)$  the \MFCE-relations defined
in \cite[Examples 6.4.v and 6.12]{IKR1}. Clearly $\taE$ is a ghost
separating equivalence relation.

$E := \taE$ has the following explicit description: Let $x,y \in
U$ be given. If $x \in M$, or if $x \in \tT(U)$, but $ex \notin
\ga$, then $x \sim_E y$ iff $x =y$. If $x \in \tT(U)$  and $ex \in
\ga$, then $ x \sim_E y $ iff $y \in \tT(U)$ and $ex = ey$.

\begin{defn}\label{defn2.8} $ $

\begin{enumerate} \ealph
    \item We call the supertropical semiring $U / \taE$
    consisting of the $\taE$-equivalence classes the
    \textbf{t-collapse (= tangible collapse)} of $U$ over $\ga$
    and we denote this semiring by $\tac(U)$.

    \item We call the natural semiring homomorphism
$$ \pi _ {\taE} : U \to \tac(U)$$
the \textbf{t-collapsing map of $U$ over $\ga$}, and we denote
this map by $\tapi$, or $\taUpi$  if necessary.

    \item If $\vrp: R \to U $  is a tangible supervaluation
    covering $v$, we call the supervaluation
    $$ \vrp / \taE = \tapi \circ \vrp $$
    the \textbf{t-collapse of $\vrp$ over $\ga$}, and we denote
    this supervaluation by $\tac(\vrp)$.

    \item Finally, we say that $U$ is \textbf{t-collapsed over
    $\ga$}, if $\tapi$   is an isomorphism, for which we abusively
    write  $\tac(U) = U$, and we say that $\vrp$ is \textbf{t-collapsed over
    $\ga$} if $\tac(\vrp)) = \vrp$ (which happens iff $\tac(U) = U$,
    since our supervalutions  are assumed to be
    surjective).
\end{enumerate}
\end{defn}

We describe the semiring $\tac(U)$ more explicitly. Without
essential loss of generality we assume that $e\tT(U)_0 =M$.

If $\zz$ is any subset of $M$, let $U|_\zz$ denote the preimage of
$Z$ under the ghost map $\nu_U$,
$$ U|_\zz \ := \ \{ x \in U \ | \ ex \in \zz \}.$$

Now, if $U$ is t-collapsed over $\ga$, every $z \in U $ has a
unique tangible preimage under $\nu_U$. We denote this preimage by
$\chz$, and then have
$$ U|_\ga \ = \ \ga \ \dot \cup \  \chmfa$$
with $\chmfa = \{ \chz  \ds | z \in \ga\}$.

In general we identify
$$ \tac(U)|_{M \setminus \ga}  \ = \ U|_{M \setminus \ga} .$$
This makes sense since $[x]_{\taE} = \{ x \}$ for any $x \in U|_{M
\setminus \ga} . $ We then have
$$ \tac (U) \ = \ ( U|_{M \setminus \ga} \ \cup \ M ) \ \dot \cup \ \chmfa  $$
and
$$ U|_{M \setminus \ga} \ \cap \ M \ = \ M \setminus \ga.$$

After these identifications the following is obvious.
\begin{lem}\label{lem2.9} $ $
\begin{enumerate} \eroman
    \item If $x \in U$ then
    $$ \tapi(x) \ = \ \left\{
\begin{array}{lll}
  x &  & \text{if $x\in M$ or $ex \notin \ga$}, \\[2mm]
  \widecheck{(ex)} &  & \text{if } ex \in \ga\\
\end{array}
    \right.$$

    \item If $\vrp \in \tCov(U)$ and $a \in R$, then
$$ \tac(\vrp)(a) \ = \ \left\{
\begin{array}{lll}
  \vrp(a) &  & \text{if }  v(a) \notin \ga, \\[2mm]
  \widecheck{v(a)} &  & \text{if } v(a) \in \ga.\\
\end{array}
    \right.$$
\endbox
\end{enumerate}

\end{lem}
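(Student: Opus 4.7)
The plan is to treat both parts as a direct bookkeeping exercise, unpacking the definition of $\taE = \tE \cap E(M \setminus \ga)$ together with the identifications set up just before the statement, under which $\tac(U) = (U|_{M \setminus \ga} \cup M) \,\dot\cup\, \chmfa$ and $\tapi = \pi_{\taE}$.

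For part (i), I would split into three cases according to whether $x$ is ghost or tangible and where $ex$ sits relative to $\ga$. From the explicit description of $\taE$ recalled in the paragraph following Definition~\ref{defn2.8}, the class $[x]_{\taE}$ is the singleton $\{x\}$ whenever $x \in M$, or $x \in \tT(U)$ with $ex \notin \ga$; under the identification $U|_{M \setminus \ga} \cup M \hookrightarrow \tac(U)$ this class becomes $x$ itself, giving $\tapi(x) = x$. In the remaining case $x \in \tT(U)$ with $ex \in \ga$, the class $[x]_{\taE}$ equals $\{\, y \in \tT(U) \mid ey = ex \,\}$, which is by construction the unique tangible preimage of $ex \in \ga$ under the ghost map $\nu_{\tac(U)}$, namely the element $\widecheck{(ex)} \in \chmfa$. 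This yields the second clause.

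For part (ii), I would simply apply (i) to $x = \vrp(a)$. Since $\vrp \in \tCov(v)$ is tangible, $\vrp(a) \in \tT(U)$ for every $a \in R$, and $e\vrp(a) = v(a)$ because $\vrp$ covers $v$. If $v(a) \notin \ga$, then $e\vrp(a) \notin \ga$ and (i) gives $\tac(\vrp)(a) = \tapi(\vrp(a)) = \vrp(a)$; if $v(a) \in \ga$, then $e\vrp(a) \in \ga$ and (i) yields $\tac(\vrp)(a) = \widecheck{(e\vrp(a))} = \widecheck{v(a)}$.

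There is no substantive obstacle; the only point requiring care is notational consistency, namely that the equations $\tapi(x) = x$ and $\tapi(x) = \widecheck{(ex)}$ already presuppose the identifications made before the lemma ($\taE$-classes with representatives in $U|_{M \setminus \ga} \cup M$ identified with their representatives, and tangible $\taE$-classes with ghost companion in $\ga$ identified with elements of $\chmfa$). Once these identifications are in force the proof reduces, as above, to reading off the three cases of $\taE$.
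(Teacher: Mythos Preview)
Your proposal is correct and matches the paper's approach: the paper simply declares the lemma ``obvious'' after the identifications preceding it, and your argument is precisely the unpacking of those identifications and the explicit case description of $\taE$. One tiny imprecision: in part (ii) you write $\vrp(a)\in\tT(U)$ for every $a$, but strictly $\vrp(a)\in\tT(U)_0$; the case $\vrp(a)=0$ is harmless since then $v(a)=0\in\ga$ and $\widecheck{0}=0=\tapi(0)$, so both clauses of (i) agree there.
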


We now look at the map
$$ \tac: \tCov(v) \to \tCov(v)$$
which sends each $\vrp \in \tCov(v)$  to its t-collapse
$\tac(\vrp)$ over $\ga$. It is clearly order preserving, and is
idempotent, i.e., $(\tac)^2 = \tac$. We denote its image by
$\tcaCov(v)$. Its elements are the t-collapsed tangible
supervaluations over $\ga$  that cover $v$.

Using the description of suprema  and infima in the complete
lattice $\Cov(v)$ in \cite[\S7]{IKR1}, it is an easy matter to
verify the following
\begin{prop}\label{prop2.10}
$\tCov(v)$ is a complete sublattice of $\Cov(v)$, and $$\tac:
\tCov(v) \to \tCov(v)$$ respects suprema  and infima  in
$\tCov(v)$. Thus, also $\tcaCov(v)$ is a complete sublattice
of~$\Cov(v)$.
\end{prop}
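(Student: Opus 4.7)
My plan is to translate everything into the language of MFCE-relations on $U(v)$. The first assertion---that $\tCov(v)$ is a complete sublattice of $\Cov(v)$---is recorded in \cite[\S10]{IKR1}, which I would simply cite. The substantive content is the commutation of $\tac$ with arbitrary suprema and infima taken in $\tCov(v)$; once this is in hand, the conclusion that $\tcaCov(v)$ is a complete sublattice of $\Cov(v)$ follows formally, since $\tac$ is an idempotent order-preserving self-map of $\tCov(v)$ whose image $\tcaCov(v)$ coincides with its set of fixed points.

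To establish the commutation I would invoke the description from \cite[\S7]{IKR1}, under which $\Cov(v)$ is in order-reversing bijection with the lattice of MFCE-relations on $U(v)$ via $\vrp \mapsto E(\vrp)$, with $\vrp = \pi_{E(\vrp)} \circ \vrp_v$. In this dictionary, the supremum of $\{\vrp_i\}$ corresponds to the intersection $\bigcap_i E(\vrp_i)$, and the infimum to the smallest MFCE-relation containing $\bigcup_i E(\vrp_i)$. A direct unwinding of Definition~\ref{defn2.8} shows that the t-collapse over $\ga$ corresponds to the operation $F \mapsto F \vee \taE$, where $\taE$ is now read as the MFCE-relation on $U(v)$ itself. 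Preservation of infima is then automatic, since in any lattice ``join with a fixed element'' distributes over arbitrary joins.

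Preservation of suprema reduces to the distributive identity
\[
\Bigl(\bigcap_i F_i\Bigr) \vee \taE \ = \ \bigcap_i \bigl(F_i \vee \taE\bigr),
\]
which I expect to be the main obstacle, since identities of this form fail for generic pairs of MFCE-relations. The inclusion $\subseteq$ is monotone. For $\supseteq$, I would run a short case analysis on a pair $(x,y)$ belonging to the right-hand side. Since each $F_i \vee \taE$ is fiber conserving and sits inside $\tE$, one has either $x = y$, or $x, y$ both ghost with $ex = ey$, or $x, y$ both tangible with $ex = ey$. If $x, y$ are ghost, the $\taE$-steps along a witnessing $(F_i \cup \taE)$-path are trivial, so the pair lies in $\bigcap_i F_i$. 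If $x, y$ are tangible with $ex \in \ga$, then $(x,y) \in \taE$ by definition. If $x, y$ are tangible with $ex \notin \ga$, fiber conservation forces every intermediate vertex of the witnessing path to have ghost $ex \notin \ga$; on such a fiber $\taE$ has only trivial classes, so again the pair lies in $\bigcap_i F_i$. The identity succeeds precisely because the nontrivial $\taE$-classes are confined within individual ghost fibers. Combined with the automatic preservation of infima, this completes the verification that $\tac$ preserves both lattice operations.
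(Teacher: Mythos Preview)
Your proposal is correct and follows essentially the same route the paper gestures at: the paper's proof consists of a single sentence invoking the description of suprema and infima from \cite[\S7]{IKR1} and declaring the verification ``an easy matter,'' and you have supplied exactly those details via the MFCE-relation dictionary on $U(v)$. One cosmetic remark: your case ``$x,y$ both ghost with $ex=ey$'' is vacuous, since ghosts with equal $\nu$-value are equal; only the tangible case with $ex\notin\ga$ carries any content, and your path argument there is the right one.
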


\begin{rem}\label{rem2.11} Independently of this proposition it
is  clear that $\tcaCov(v)$ is a lower set in $\tCov(v)$  with top
element $\tac(\vrp _v)$. It follows that
$$  \tcaCov(v) \ = \ \{ \psi \in \Cov(v) \  | \ \tac(\vrp_v) \geq \psi \geq \chv \ \}.  $$
This proves again that $\tcaCov(v)$ is a complete sublattice of
$\Cov(v)$.
\end{rem}

We return to the surjective homomorphism $\gm: M \to N$ and now
choose for $\ga$ the prime ideal $\gp = \gm^{-1}(0)$ of  $M$.
\begin{prop}\label{prop2.12} Let $V := \tpc(U)$.
\begin{enumerate} \eroman
    \item The homomorphism $\tppi : U \to V$ induces an
    isomorphism $(\tppi)_\gm : U_\gm \tilde{\to}
    V_\gm$ over $N$. More precisely, using  the identifications from
    above we have $U_\gm = V_\gm$, and then $(\tppi)_\gm$  is the
    identity of $U_\gm$.
    \item $\al_{U, \gm} = \al_{V,\gm} \circ \tppi$.
    \item If $\vrp \in \tCov(v)$ then $\pusho{\gm}(\vrp) =
    \pusho{\gm}(\tpc(\vrp))$.
\end{enumerate}
\end{prop}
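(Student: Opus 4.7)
The plan is to apply Proposition~\ref{prop2.2} to the homomorphism $\tppi: U \to V$, reducing everything to the explicit formulas of Lemmas~\ref{lem2.5} and~\ref{lem2.9}. I would first observe that the equivalence relation defining $\tpc(U)$ never identifies distinct ghost elements (if $x \in M$ then $x \sim y$ iff $x = y$), so $\tppi$ restricts to the identity on $M = eU = eV$. Hence $\tppi$ is a transmission over $M$, and Proposition~\ref{prop2.2}(a) produces a unique transmission $(\tppi)_\gm : U_\gm \to V_\gm$ over $N$ satisfying
$$(\tppi)_\gm \circ \al_{U,\gm} = \al_{V,\gm} \circ \tppi.$$
The remaining content of (i) is that, under the standard identifications from \S\ref{sec:1}, $U_\gm$ and $V_\gm$ coincide as supertropical semirings and $(\tppi)_\gm$ is the identity.

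For the identification $U_\gm = V_\gm$, I would use the presentation $U_\gm = \{x \in \tT(U) \mid ex \notin \gp\} \;\dot\cup\; N$ recorded just before Lemma~\ref{lem2.5}. Combining this with Lemma~\ref{lem2.9} applied to $\ga = \gp$, the tangibles of $V = \tpc(U)$ split into two kinds: the ``original'' tangibles $x \in \tT(U)$ with $ex \notin \gp$, and the ``check'' symbols $\chz$ for $z \in \gp$. The latter satisfy $e\chz = z \in \gp$, so they are precisely the tangibles of $V$ that are killed on passing to $V_\gm$. Thus $\tT(V_\gm) = \{x \in \tT(U) \mid ex \notin \gp\} = \tT(U_\gm)$, and since the ghost ideals are both $N$ and the ghost companion of an original tangible is unchanged, the two supertropical semirings agree on the nose (their addition and multiplication are determined by tangible multiplication and the ghost map).

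To pin down $(\tppi)_\gm$ as the identity, I would verify the displayed relation case-by-case using Lemmas~\ref{lem2.5} and~\ref{lem2.9}(i): on $x \in \tT(U)$ with $ex \notin \gp$ both sides give $x$; on $x \in \tT(U)$ with $ex \in \gp$ both sides give $0$ (since $\tppi(x) = \widecheck{ex}$ and $e\widecheck{ex} = ex \in \gp$); and on $z \in M$ both sides give $\gm(z) \in N$. Surjectivity of $\al_{U,\gm}$ then forces $(\tppi)_\gm = \id_{U_\gm}$, simultaneously establishing (i) and (ii).

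Part (iii) is then a one-line composition: by Definition~\ref{defn2.8}(c), $\tpc(\vrp) = \tppi \circ \vrp$ for $\vrp \in \tCov(v)$, so (ii) yields
$$\pusho{\gm}(\tpc(\vrp)) = \al_{V,\gm} \circ \tppi \circ \vrp = \al_{U,\gm} \circ \vrp = \pusho{\gm}(\vrp).$$
The only real obstacle I anticipate is bookkeeping of the several identifications invoked; conceptually, both the t-collapse over $\gp$ and the pushout along $\gm$ act on tangibles with ghost in $\gp$, and the pushout subsumes the t-collapse since it sends each $\chz$ with $z \in \gp$ to $0$ anyway.
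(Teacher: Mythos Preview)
Your proof is correct and follows essentially the same route as the paper: establish the identification $U_\gm = V_\gm$ via the explicit tangible descriptions, verify $\al_{U,\gm} = \al_{V,\gm}\circ\tppi$ by the case analysis from Lemmas~\ref{lem2.5} and~\ref{lem2.9}, and then invoke the uniqueness in Proposition~\ref{prop2.2}(a) (equivalently, surjectivity of $\al_{U,\gm}$) to conclude $(\tppi)_\gm = \id_{U_\gm}$; part~(iii) is the same one-line composition in both. The only difference is presentational order: the paper first establishes~(ii) directly and then reads off~(i) via Proposition~\ref{prop2.2}(a), whereas you set up the Proposition~\ref{prop2.2}(a) framework first and derive~(i) and~(ii) simultaneously.
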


\begin{proof}
We have the identification
$$ \tT(U|_{M \setminus \gp}) \ = \ \tT(V|_{M \setminus \gp})$$
(see above). On the other hand, $\al_{U,\gm}$ maps $U|_\gp$ to $\{
0_N\}$, and $\al_{V,\gm}$ maps $V|_\gp$ to $\{ 0_N\}$. Finally
$$ \al_{U,\gm} | _M  \ = \ \al_{V,\gm} | _M  \ =  \gm.$$
Thus it is evident that, under our identifications, $U_\gm =
V_\gm$ and then $\al_{U,\gm} = \al_{V,\gm} \circ \tppi$. Reading
this equality as
$$ \id_{U_\gm} \circ \ \al_{U,\gm} \  =  \ \al_{V,\gm} \circ \ \tppi$$
we conclude by Proposition \ref{prop2.2}.a that $(\tppi)_\gm =
\id_{U_\gm}$. Finally, if $\vrp \in \tCov(v)$, then
$$ \pusho{\gm}(\tpc(\vrp)) \ = \ \al_{V,\gm} \circ \tpc(\vrp) \ = \
\al_{V,\gm} (\tppi (\vrp)) \ = \ \al_{U,\gm}( \vrp ) \ = \
\pusho{\gm}(\vrp).
$$
\end{proof}
\begin{lem}\label{lem2.13} Let $U$, $V$ be supertropical
semirings with $eU = eV =M$, and $\lm:U \to V$ a transmission over
$M$ with $\lm(\tT(U)) \subset \tT(V)$. Assume further that $U$ is
t-collapsed over~$\gp$. Finally assume that $\lm_\gm: U_\gm  \to
V_\gm$ is injective. Then $\lm: U \to V$ is injective.
\end{lem}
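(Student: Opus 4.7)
The plan is to prove injectivity of $\lm$ by distinguishing cases according to where $x_1, x_2 \in U$ lie, assuming $\lm(x_1) = \lm(x_2)$. First I note that $\lm$ is a transmission with $\lm^\nu = \id_M$ injective on the ghost ideal, so by the footnote to Definition~\ref{defn1.3} $\lm$ is actually a semiring homomorphism. Applying the ghost map to $\lm(x_1) = \lm(x_2)$ gives $\lm(ex_1) = \lm(ex_2)$, and since $\lm|_M = \id_M$ I conclude that $ex_1 = ex_2$.

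If both $x_i$ lie in $M$, then $x_i = \lm(x_i)$ and so $x_1 = x_2$ is immediate. If $x_1 \in M \sm \{0\}$ while $x_2 \in \tT(U) \sm \{0\}$ (or vice versa), then $\lm(x_1) = x_1$ lies in $M \sm \{0\}$, while $\lm(x_2) \in \tT(V)$ by the hypothesis $\lm(\tT(U)) \subset \tT(V)$; the equality $\lm(x_1) = \lm(x_2)$ would then force $x_1 \in M \cap \tT(V) = \{0\}$, a contradiction. Thus this mixed case does not occur.

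The remaining case is $x_1, x_2 \in \tT(U)$, which I split further according to whether the common ghost companion $ex_1 = ex_2$ lies in $\gp$. If $ex_1 \notin \gp$, Lemma~\ref{lem2.5} identifies $\al_{U,\gm}(x_i)$ with $x_i$ in $\tT(U_\gm)$. Applying $\al_{V,\gm}$ to both sides of $\lm(x_1) = \lm(x_2)$ and invoking the intertwining identity $\al_{V,\gm} \circ \lm = \lm_\gm \circ \al_{U,\gm}$ from Proposition~\ref{prop2.2}.a yields $\lm_\gm(x_1) = \lm_\gm(x_2)$, whence $x_1 = x_2$ by the injectivity of $\lm_\gm$. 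The genuinely difficult subcase is $ex_1 = ex_2 \in \gp$: here both $x_1$ and $x_2$ are collapsed to $0$ in $U_\gm$ by $\al_{U,\gm}$, so the injectivity of $\lm_\gm$ conveys no information. This is exactly where the t-collapsed hypothesis on $U$ is needed: since $U$ is t-collapsed over $\gp$, each $z \in \gp$ has a unique tangible preimage $\chz$ under $\nu_U$, so both $x_1$ and $x_2$ coincide with $\widecheck{ex_1}$, completing the proof. The main obstacle, and the reason the t-collapsed assumption appears in the hypothesis, is precisely this last subcase, where the pushout/injectivity data alone is insufficient.
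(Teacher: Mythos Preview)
Your proof is correct and follows essentially the same approach as the paper: both arguments partition $U$ into $M$, the tangibles over $M\setminus\gp$, and the tangibles over $\gp$, handling the first via $\lm|_M=\id_M$, the second via the intertwining relation $\al_{V,\gm}\circ\lm=\lm_\gm\circ\al_{U,\gm}$ together with injectivity of $\lm_\gm$, and the third via the t-collapsed hypothesis. The paper's version is slightly more compact because it tacitly uses that $\lm$ respects this three-part decomposition (so cross-cases need no separate treatment), whereas you spell out the mixed case explicitly; note also that $M\cap\tT(V)=\emptyset$ rather than $\{0\}$, though this does not affect your contradiction.
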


\begin{proof}
The upper square of the of the diagram in Proposition
\ref{prop2.2}.a restricts to a commuting square
$$\xymatrix{
           \tT(U | _{M \setminus \gp})  %
            \ar[d]_{``\lm"}
                \ar[rr]^{\underset{\widetilde{\phantom {w} \ }}{\phantom{u}}}_{\id}    &&
            \tT(U_\gm) \ar[d]_{``\lm_\gm"}
       \\
    \tT(V | _{M \setminus \gp})    \ar[rr]^{\underset{\widetilde{\phantom {w} \ }}{\phantom{u}}}_{\id}
     &&        \tT(V_\gm)
 }$$
Here the vertical arrows are restrictions of the maps $\lambda$
and $\lambda_\gamma$. The vertical arrow on the right is an
injective map by assumption. Thus, also the left vertical arrow is
an injective map. The restriction $\lm | \tT(U|_\gp)$ is a priori
forced to be injective, since $U$ is t-collapsed over~$\gp$.
Finally  $\lm$ restricts to the identity on $M$. Thus, $\lm$ is
injective.
\end{proof}

We now are ready for the main result of this section

\begin{thm}\label{thm2.14} As before assume that $\tT(U)$ is closed
under multiplication.
\begin{enumerate} \ealph
    \item The pushout map
    $$ \tpusho{\gm} : \tCov(v) \ \to \ \tCov(\gm v) $$
 restricts to a bijection from $\tcpCov(v)$ to $\pusho{\gm}(\tCov(\gm
 v))
 $. Consequently $\pusho{\gm}(\tCov(\gm v))$  is a sublattice of
 $\tCov(\gm v)$ isomorphic to $\tcpCov(v)$.

    \item If $\vrp, \psi \in \tCov(v)$  then $\pusho{\gm}(\vrp) =
    \pusho{\gm}(\psi)$ iff $\vrp$ and $\psi$ have the same
    t-collapse over $\gp$.
\end{enumerate}

\end{thm}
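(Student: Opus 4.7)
The plan is to reduce everything to the injectivity of $\tpusho{\gm}$ on $\tcpCov(v)$. Surjectivity of $\tpusho{\gm}|_{\tcpCov(v)}$ onto $\tpusho{\gm}(\tCov(v))$ is automatic from Proposition~\ref{prop2.12}(iii), which gives $\pusho{\gm}(\vrp) = \pusho{\gm}(\tpc(\vrp))$ for every $\vrp \in \tCov(v)$, with $\tpc(\vrp) \in \tcpCov(v)$. Granting injectivity on $\tcpCov(v)$, part~(b) then follows formally: $\tpc(\vrp) = \tpc(\psi)$ gives $\pusho{\gm}(\vrp) = \pusho{\gm}(\tpc(\vrp)) = \pusho{\gm}(\tpc(\psi)) = \pusho{\gm}(\psi)$, and conversely if $\pusho{\gm}(\vrp) = \pusho{\gm}(\psi)$ the same identity forces $\pusho{\gm}(\tpc(\vrp)) = \pusho{\gm}(\tpc(\psi))$, whence $\tpc(\vrp) = \tpc(\psi)$ by injectivity.

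The crux is therefore: if $\vrp : R \to U$ and $\psi : R \to V$ lie in $\tcpCov(v)$ and $\pusho{\gm}(\vrp) = \pusho{\gm}(\psi)$, then $\vrp \sim \psi$. Let $\sigma : U_\gm \iso V_\gm$ be the unique isomorphism over $N$ witnessing this equality of equivalence classes. By the explicit descriptions in Theorem~\ref{thm1.12} and Lemma~\ref{lem2.5}, we have $\tT(U_\gm) = \tT(U|_{M \smin \gp})$ and $\tT(V_\gm) = \tT(V|_{M \smin \gp})$, and the t-collapsedness of $U$ and $V$ over $\gp$ means that $\nu_U$ and $\nu_V$ restrict to bijections from $\tT(U|_\gp)$ and $\tT(V|_\gp)$, respectively, onto $\gp$. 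I would then define $\tau : U \to V$ to be the identity on $M$, to agree with $\sigma$ on $\tT(U|_{M \smin \gp})$, and, for each $x \in \tT(U|_\gp)$, to send $x$ to the unique $y \in \tT(V|_\gp)$ with $\nu_V(y) = \nu_U(x)$.

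The main obstacle is verifying that $\tau$ is a semiring isomorphism over $M$ satisfying $\tau \circ \vrp = \psi$. Additivity is automatic because $\tau$ commutes with the ghost map and addition in a supertropical semiring is determined by multiplication together with $\nu$. Multiplicativity splits by whether the product of two ghost companions lies in $\gp$: outside $\gp$ one uses multiplicativity of $\sigma$; inside $\gp$, the closure of $\tT(V)$ under multiplication together with uniqueness of tangible preimages over $\gp$ in $V$ forces $\tau(x)\tau(y)$ to equal the unique tangible with ghost companion $\nu_U(xy)$, which is precisely $\tau(xy)$. It is this inside-$\gp$ case that makes essential use of the standing assumption that $\tT(U)$, hence also $\tT(V)$, is closed under multiplication. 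The identity $\tau \vrp = \psi$ is then checked pointwise, using the defining property of $\sigma$ when $v(a) \notin \gp$ and uniqueness of the tangible preimage when $v(a) \in \gp$.

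Finally, for the sublattice statement in~(a), the identical lifting construction, applied now to an arbitrary transmission $\sigma : U_\gm \to V_\gm$ over $N$ (not necessarily an isomorphism) witnessing $\pusho{\gm}(\vrp) \geq \pusho{\gm}(\psi)$, still produces a transmission $\tau : U \to V$ over $M$ with $\tau \vrp = \psi$, so $\vrp \geq \psi$ in $\tcpCov(v)$. Hence the inverse of $\tpusho{\gm}|_{\tcpCov(v)}$ is also order-preserving, so $\tpusho{\gm}|_{\tcpCov(v)}$ is an order isomorphism onto $\pusho{\gm}(\tCov(v))$, and combined with Proposition~\ref{prop2.10} this identifies the image with $\tcpCov(v)$ as a complete lattice.
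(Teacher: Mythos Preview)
Your approach is correct and takes a genuinely different route from the paper's. The paper reduces to \emph{comparable} pairs: invoking that $\tpogm|_{\tcpCov(v)}$ is a lattice homomorphism, it only needs to show that if $\vrp \geq \psi$ in $\tcpCov(v)$ with $\vrp \neq \psi$ then $\pogm(\vrp) \neq \pogm(\psi)$, and this is done by contraposition via Lemma~\ref{lem2.13}, which says that injectivity of $(\al_{\psi,\vrp})_\gm$ forces injectivity of $\al_{\psi,\vrp}$. You instead lift an arbitrary transmission $\sigma : U_\gm \to V_\gm$ over $N$ (arising from $\pogm(\vrp) \geq \pogm(\psi)$) directly to a transmission $\tau : U \to V$ over $M$ with $\tau\vrp = \psi$; this yields injectivity and order-reflection simultaneously, without appealing to any lattice-homomorphism property of $\tpogm$. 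Your construction is in effect a section of the passage $\lambda \mapsto \lambda_\gm$, whereas the paper only uses that this passage reflects injectivity.

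One step you gloss over deserves a sentence: the claim that $\tau$ commutes with the $M$-valued ghost map. Since $\sigma$ is a priori only over $N$, the transmission axioms alone give $\gm(\nu_V(\sigma(x))) = \gm(\nu_U(x))$ for $x \in \tT(U|_{M\sm\gp})$, not equality in $M$. The fix is exactly the surjectivity of $\vrp$ that you invoke elsewhere: every such $x$ is $\vrp(a)$ for some $a$ with $v(a) \notin \gp$, and then $\sigma(x) = \sigma(\pogm(\vrp)(a)) = \pogm(\psi)(a) = \psi(a)$, whence $\nu_V(\sigma(x)) = v(a) = \nu_U(x)$ in $M$. Once this is made explicit, your multiplicativity argument (splitting on whether $\nu_U(xy) \in \gp$) and the pointwise check of $\tau\vrp = \psi$ go through as written.
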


\begin{proof} a): Since we know already that $\pogm | \tcpCov(v) $
is a lattice homomorphism (Proposition \ref{prop2.10}), it
suffices to verify the following: If $\vrp, \psi \in \tCov(v) $
are t-collapsed over $\gp$ and $\vrp \geq \psi$, but $\vrp \neq
\psi$, then $\pogm(\vrp) \neq \pogm(\psi)$.

We have a unique surjective transmission $\lm:= \al_{\psi, \vrp}:
U \to V$ with $\psi =  \lm \vrp$. This implies  $\pogm(\psi) =
\lm_\gm \pogm(\vrp)$ by Corollary \ref{cor2.4}. If $\lm_\gm$ were
an isomorphism then also $\lm$ would be an isomorphism by Lemma
\ref{lem2.13} above. But this is not true. Thus $\lm_\gm$ is not
an isomorphism, and this means that $\pogm(\psi) \neq
\pogm(\vrp)$. \pSkip

b): We know by Proposition \ref{prop2.12} that $\pogm(\vrp) =
\pogm(\tpc(\psi))$. Thus $\pogm(\vrp) = \pogm(\psi)$ iff
$\pogm(\tpc(\vrp)) = \pogm(\tpc(\psi))$. By part a) this happens
iff $\tpc(\vrp) = \tpc(\psi)$.
\end{proof}

\medskip
We turn to the image of the map $\tpogm: \tCov(v) \to \tCov(\gm
v)$. Here we will put emphasis on strong supervaluations. Thus we
now assume in addition that the surjective valuation $v: R \to M$
is strong.

If $\vrp:R \to U$ is a strong supervaluation covering $v$, then
$\pogm(\vrp) = \al_{U,\gm} \circ \vrp$ is again a strong
supervaluation, as follows from \cite[Lemma 10.1.ii]{IKR1} and the
definition of ``strong'' \cite[Definition 9.9]{IKR1}. Thus
$$ \pogm(\tsCov(\vrp)) \ \subset \tsCov(\gm v).  $$
We have seen that $\pogm(\vrp_v) = \vrp_{\gm v}$, but we can only
state that the pushout $\pogm(\overline{\vrp}_v)$  of the initial
strong supervaluation $\overline{\vrp}: R \to \overline {U(v)}$ is
dominated by $\overline{\vrp}_{\gm v}: R \to \overline{U(\gm v)}$.
On the other side,  the pushout $\pogm(\htv)$ of the bottom
element $\htv : R \to D(M)$ of both $\tsCov(\vrp)$ and $\tCov(v)$
dominates $\widehat { \gm v} : R \to D(N)$. Using the
abbreviations
$$ \al := \al_{U(v), \gm}, \qquad \bar \al := \al_{\overline{U(v)},\gm}, \qquad \bt:= \al_{D(M), \gm},$$
we thus have a commuting diagram

$$\xymatrix{
 &&     U(v) %
                \ar[rr]^{\al}   \ar[dd] &&
            U(v)_\gm = U(\gm v) \ar[d] \\
      &&  &&
            \overline U(\gm v)  \ar[d]
       \\
 &&     \overline{U(v)} %
                \ar[rr]^{\bar \al}   \ar[d] &&
            (\overline{U(v)})_\gm  \ar[d] \\
      &&     D(M) %
                \ar[rr]^{\bt}   \ar[dd] &&
            D(M)_\gm  \ar[d] \\
      &&  &&
            D(N)  \ar[d]
       \\
    R \ar[rr]_{v} \ar[rruu]_{\htv}
       \ar[rruuu]^{\bar \vrp_ v}
       \ar[rruuuuu]^{ \vrp_ v}
     & &  M   \ar[rr]^{\gm}   &&        N
 }$$
with surjective transmissions over $M$ and $N$ respectively as
vertical arrows.

The following questions immediately come to mind.
\begin{ques}\label{ques2.15} $ $  \begin{enumerate}
    \item Can we expect that $\overline{\vrp}_{\gm v } = \pogm (\overline{\vrp}_{v
    })$?

    \item Can we expect that $\widecheck {\gm v } = \pogm (\chv)$?

    \item Is $\pogm(\tCov(v))$  convex\footnote{A subset $Y$ of  a poset $X$ is called convex in $X$
    if $y \leq x \leq z$ for $y,z \in Y$, $x\in X$ implies that $x\in Y$.} in $\tCov(\gm v)$?
    \item Is $\pogm(\tsCov(v))$  convex  in $\tsCov(\gm v)$?
\end{enumerate}

\end{ques}
 Recall that $\tsCov(\gm v)$  is convex in $\tCov(\gm v)$, and $\tCov(\gm
 v)$ is convex in $\Cov(\gm v)$, as we have  seen in \cite[\S10]{IKR1}.

\medskip
Question (2) has a negative answer: If $z \in N \setminus \{ 0
\}$, then the tangible fiber of \\ $\{ x\in D(M)_\gm \ds | ex =
z\} $ is the union of the tangible fibers of $D(M)$ over the
points of $\gm^{-1}(z)$, and thus will quite often contain more
than one point. The other questions will be answered  here
completely only in a special case to which we turn now.

\medskip
Assume that $R \setminus \gq$  is a group under multiplication.
Then we can give a very explicit description of the map $\tpogm$,
and even $\pogm$.

\medskip
Now $M \smin \zset = v(R \smin \gq)$ and $N \smin \zset = \gm (M
\smin \zset)$ are groups, i.e., $M$ and $N$ are bipotent
semifields. This forces  $\gp = 0$ and $\gq = \gq'$.

Since $\gp =0$ we conclude from Theorem \ref{thm2.14} and
Proposition \ref{prop2.12} that $\pogm$ is an isomorphism of the
lattice $\tCov(v)$ onto its image $\pogm(\tCov(v))$, By
\cite[\S8]{IKR1} the \MFCE-relations on $U(v)$ except $E(\nu_U)$
are orbital, hence do not identify any tangibles with ghosts. Thus
$\Cov(v) = \tCov(v) \cup \{ v \}$ (as essentially observed in
\cite[\S8]{IKR1}). We have $\pogm(v) = \gm v$, and we conclude
that $\pogm$ \emph{is an isomorphism from $\Cov(v)$ onto its
image}.

\medskip
We have $M = \Gm \cup \zset$  with $\Gm$ an ordered abelian group.
Let $$\Dl : = \gm^{-1}(1_N).$$ This is a convex subgroup of $\Gm$,
since $\gm : M \to N$ is an order preserving monoid homomorphism.
The map $\gm$ induces an isomorphism from $M / \Dl = \Gm / \Dl
\cup \zset$ onto $N$. In the following we assume without loss of
generality that $N = M / \Dl$ and   $\gm$ is the map $x \mapsto
\Dl x$ from $M$ to $N$. Excluding a trivial case we assume that
$\Dl \neq 1$.

Returning to the notation from the end of \cite[\S10]{IKR1} we
have $$\go_v ^* = \{ a \in R \ | \ v(a) \in 1_M\} \quad \text{ and
} \quad \go_{\gm v} ^* = \{ a \in R \ | \ v(a) \in \Dl\} ,$$
further $\mm_v  = \{ a \in R \ | \ v(a) < 1_M\} $ and $\mm_{\gm v}
= \{ a \in R \ | \ v(a) < \Dl \} $. \{$v(a) < \Dl $ means $v(a ) <
\dl$ for every $\dl \in \Dl $.\}

If $H$ is a subgroup of $\go^*_v$ then $H$ is also a subgroup of
$\go_{\gm v}^ *$, since $\go^*_v$ is a subgroup of $\go_{\gm
v}^*$. Thus $H$ gives us a transmission
$$ \pi_{H, U(v)} : U(v) \ \to \ U(v) / E (H)$$
over $M$ and a transmission
$$ \pi_{H, U(\gm v)} : U(\gm v) \ \to \  U(\gm v) / E (H)$$
over $N$. \{Previously both maps sloppily had been denoted by
$\pi_H$.\}

\begin{thm}\label{thm2.16}
If $H$ is any subgroup of $\go^*_v$, then
\begin{enumerate} \ealph
    \item $(\pi_{H, U(v)})_\gm = \pi_{H, U(\gm v)}$,
    \item $\pogm(\vrp_v / H) = \vrp_{\gm v} / H$.
\end{enumerate}
\end{thm}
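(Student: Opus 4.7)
The plan is to derive both statements from Proposition~\ref{prop2.2}.a applied to the semiring homomorphism $\lm := \pi_{H,U(v)}: U(v) \to U(v)/E(H)$. This $\lm$ is defined over $M$ because $E(H)$ is trivial on the ghost ideal of $U(v)$: the hypothesis $H \subset \go_v^*$ gives $eh = 1_M$ for every $h \in H$, so the $H$-action fixes $M$ pointwise. Proposition~\ref{prop2.2}.a then characterizes $\lm_\gm = (\pi_{H,U(v)})_\gm$ as the unique transmission over $N$ fitting into the square with $\al_{U(v),\gm}$ on the left, $\al_{U(v)/E(H),\gm}$ on the right, and $\pi_{H,U(v)}$ on top. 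The main work is to identify the codomain $(U(v)/E(H))_\gm$ with $U(\gm v)/E(H)$ and to check that the unique such bottom arrow is precisely $\pi_{H,U(\gm v)}$.

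First I pin down the explicit descriptions. Since $N$ is a bipotent semifield, $\gp = \gm^{-1}(0) = \zset$, and Lemma~\ref{lem2.5} says that, for any supertropical semiring $U$ with $eU = M$, the map $\al_{U,\gm}$ acts as the identity on $\tT(U)$ and as $\gm$ on $M$. By Proposition~\ref{prop2.7}, $U(v)_\gm = U(\gm v) = (R \smin \gq) \cup N$ (recalling $\gq = \gq'$). The orbital relation $E(H)$ identifies tangibles $a,b \in R \smin \gq$ iff $a = hb$ for some $h \in H$ and leaves ghosts untouched, so $U(v)/E(H)$ and $U(\gm v)/E(H)$ share the same tangible part, namely the set of $H$-orbits in $R \smin \gq$ together with $0$; they differ only in their ghost ideals $M$ versus $N$. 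Feeding $U(v)/E(H)$ back into Lemma~\ref{lem2.5} identifies $(U(v)/E(H))_\gm$ canonically with $U(\gm v)/E(H)$, and in these coordinates $\al_{U(v)/E(H),\gm}$ is the identity on tangible $H$-orbits and $\gm$ on $M$.

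Under these identifications the square collapses to a pointwise check: on a tangible $x \in R \smin \gq$ both $\pi_{H,U(\gm v)} \circ \al_{U(v),\gm}(x)$ and $\al_{U(v)/E(H),\gm} \circ \pi_{H,U(v)}(x)$ return the orbit $[x]_{E(H)}$; on a ghost $m \in M$ both return $\gm(m) \in N$; the zero case is trivial. Because $\pi_{H,U(\gm v)}$ is a (homomorphic) transmission over $N$, the uniqueness clause of Proposition~\ref{prop2.2}.a forces $(\pi_{H,U(v)})_\gm = \pi_{H,U(\gm v)}$, settling (a). For (b), I write $\vrp_v/H = \pi_{H,U(v)} \circ \vrp_v$; Corollary~\ref{cor2.4} then yields $\pogm(\vrp_v/H) = (\pi_{H,U(v)})_\gm \circ \pogm(\vrp_v)$, Proposition~\ref{prop2.7} gives $\pogm(\vrp_v) = \vrp_{\gm v}$, and (a) turns the composite into $\pi_{H,U(\gm v)} \circ \vrp_{\gm v} = \vrp_{\gm v}/H$.

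The main obstacle is essentially bookkeeping: justifying the two natural identifications $U(v)_\gm = U(\gm v)$ and $(U(v)/E(H))_\gm = U(\gm v)/E(H)$, both furnished by Lemma~\ref{lem2.5} together with the fact that $\gp = \zset$. Once these are in place, no nontrivial semiring manipulation remains; everything reduces to the two evaluations on tangibles and ghosts above.
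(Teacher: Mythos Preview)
Your proof is correct and follows essentially the same route as the paper: both parts rest on Proposition~\ref{prop2.2}.a together with the explicit description in Lemma~\ref{lem2.5} (using $\gp=\{0\}$) to verify the commuting square, and part (b) is deduced from (a) via Theorem~\ref{prop2.7}. The only cosmetic differences are that you spell out the identification $(U(v)/E(H))_\gm \cong U(\gm v)/E(H)$ in more detail than the paper (which leaves this as ``easily verified''), and for (b) you invoke Corollary~\ref{cor2.4} whereas the paper unpacks that corollary directly.
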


\begin{proof} a): Let $V := U(v) / E(H)$. We are done by
Proposition \ref{prop2.2}.a if we verify that
$$ \pi_{H, U(\gm v)} \circ \al_{U(v),\gm} \ = \ \al_{V, \gm} \circ \pi_{H,U(v)}.$$
This is easily  verified by use of Lemma \ref{lem2.5}. \pSkip

b): We know (Theorem  \ref{prop2.7}) that
$$ \vrp_{\gm v} \ = \
\pogm(\vrp_v) \ = \ \al_{U(v),\gm} \circ  \vrp_v$$ Thus
$$ \vrp_{\gm v} / H \ = \ \pi_{H, U(\gm v)} \circ  \al_{U(v),\gm} \circ \vrp_v.$$
On the other hand
$$ \pogm(\vrp_v / H) \ = \ \al_{V,\gm}(\vrp_v / H) \ = \ \al_{V,\gm} \circ \pi_{H,U(v)} \circ \vrp_v . $$
By step a) we conclude that indeed
$$\pogm(\vrp_v / H) = \vrp_{\gm v} / H.$$
\end{proof}

We learned before (\cite[\S8]{IKR1}) that the elements $\vrp$ of
$\tCov$ correspond uniquely with the subgroups $H$ of $\go_v^*$
via $\vrp = \vrp_v / H$, and now conclude by Theorem \ref{thm2.16}
that
$$ \pogm(\tCov(v)) \ = \ \{ \vrp_{\gm  v} / H \ | \ H \leq \go_v^*\}.$$
($``\leq "$ means subgroup). On the other hand
$$\tCov(\gm v) \ = \   \{ \vrp_{\gm  v} / H \ | \ H \leq \go_{\gm v}^*\}.$$
Thus, $\pogm(\tCov(v))$ is an upper set of the complete lattice
$\tCov(\gm v)$ with bottom element
$$ \pogm(\chv) \ = \ \vrp_{\gm v} / \go^*_v.$$
This element is definitely different from
$$ \widecheck{\gm v} \ = \ \vrp_{\gm v} / \go^*_{\gm v}, $$
since  $\go^*_{\gm v } / \go^*_v \cong \Dl$. Thus question
\ref{ques2.15}.(2) has a negative answer (which we know already),
while question \ref{ques2.15}.(3) has a positive answer.

How about question \ref{ques2.15}.(1)? The top element of
$\tsCov(v)$ is $\overline{\vrp}_v$. We saw in \cite[\S10]{IKR1}
that $\overline{\vrp}_v = \vrp_v / 1 + \mm_v$, and now conclude by
Theorem \ref{thm2.16} that
$$ \pogm(\overline{\vrp}_v) \ = \ \vrp_{\gm v} / (1 + \mm_{v}).$$
But
$$ \overline{\vrp}_{ \gm v}  \ = \ \vrp_{\gm v} / (1 + \mm_{\gm v}),$$
and $\mm_{\gm v}$ is definitely  smaller than $\mm_v$. Thus
$\overline{\vrp}_{ \gm v}  \gvertneqq \pogm(\overline{\vrp}_v)$.
Question \ref{ques2.15}.(1) has a negative answer.

Returning to the general situation, but still with $v : R \to M$
strong, we should expect that $ \overline{\vrp}_{ \gm v}
\gvertneqq  \pogm(\overline{\vrp}_v)  $ except in rather
pathological cases. Indeed, it seems often possible to pass from
$v: R \to M$ to a strong valuation $\tv  : \widetilde R  \to
\widetilde M$, with $\widetilde R$ a semifield by a localization
process (which we did not discuss), and to argue in $\tCov(\tilde
v)$.

Concerning applications, the strong supervaluations seem to be
more important than the others. But the fact that
$\pogm(\overline{\vrp}_v)$ differs from $\overline{\vrp}_{ \gm
v}$, while $\pogm({\vrp}_v) = {\vrp}_{\gm v}$, indicates that it
would not be advisable in  supervaluation theory   to restrict the
study  to strong supervaluations from the  start, as said already
in the Introduction.

\section {Supertropical predomains with prescribed ghost map}\label{sec:3}

For later use we give a generalization of  Construction 3.16 in
\cite{IKR1} of supertropical predomains. It merits independent
interest.

\begin{thm}\label{thm3.1} Assume that $M$ is a cancellative
bipotent semidomain. Assume further that $U=(U,\cdot \; )$ is an
abelian monoid, and $(M,\cdot \; )$ is a monoid ideal of $U$
(i.e., $M$ is a subsemigroup of $U$ and $UM\subset M).$ Assume
finally that a monoid homomorphism $p:U\to M$ is given (i.e., $p$
is multiplicative and $p(1_U)=1_M $) with $p(x)=x$ for every $x\in
M$ and $p^{-1}(0)=\{0\}.$ Then the following hold:
\begin{enumerate}
\item[i)] $0\cdot x=0$ for every $x\in U$, and $U\setminus \{0\}$
is closed  under multiplication. \item[ii)] On $U$ there exists a
unique addition $(+)$ extending the addition on $M$ such that
$(U,+,\cdot \; )$ is a supertropical semiring with $M$ the ghost
ideal and $p$ the ghost map of $U=(U,+,\cdot \; ).$ \item[iii)]
$U=(U,+,\cdot \; )$ is a supertropical predomain, and for
$x_1,x_2\in U$ we have the rule\footnote{Recall that every
bipotent semiring has a natural total ordering \cite[\S2]{IKR1}.}
\begin{equation}\label{3.1}
x_1+x_2=\begin{cases}x_1\   &\text{if}\quad  p(x_1)>p(x_2),\\
x_2\  &\text{if}\quad  p(x_1)<p(x_2),\\
p(x_1)\  &\text{if}\quad p(x_1)=p(x_2).
\end{cases}\end{equation}
\end{enumerate}
\end{thm}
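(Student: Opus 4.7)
The plan is to first dispatch part (i), then define addition by formula (3.1), and finally verify the semiring axioms together with the supertropical axioms from \cite[\S3]{IKR1}.

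For part (i), I would argue as follows. Since $M$ is a monoid ideal of $U$ and $p$ restricts to the identity on $M$, the element $0\in M$ satisfies $p(0)=0$. For any $x\in U$, the product $0\cdot x$ lies in $M$ (because $M$ is an ideal), and $p(0\cdot x)=p(0)\cdot p(x)=0$, so by the hypothesis $p^{-1}(0)=\{0\}$ we get $0\cdot x=0$. If $x_1,x_2\in U\smin\{0\}$, then $p(x_1),p(x_2)\in M\smin\{0\}$; since $M$ is a cancellative bipotent semidomain, $p(x_1)p(x_2)\ne 0$, whence $p(x_1x_2)\ne 0$ and so $x_1x_2\ne 0$.

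For parts (ii) and (iii), I would \emph{define} $+$ on $U$ by formula (3.1) and then check everything. Extension of the given addition on $M$ is automatic, because for $x_1,x_2\in M$ one has $p(x_i)=x_i$ and $M$ is bipotent, so $x_1+x_2=\max(x_1,x_2)$ which matches (3.1). Commutativity is immediate from the symmetry of the formula. Associativity $(x_1+x_2)+x_3=x_1+(x_2+x_3)$ splits into cases on the ordering of $p(x_1),p(x_2),p(x_3)$ in the totally ordered monoid $M$; in each case both sides reduce to the unique $x_i$ whose $p$-value is the strict maximum, or to $p(x_i)=p(x_j)\in M$ when two or more of the $p$-values tie for the maximum (using that $p$ is the identity on $M$). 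That $0$ is an additive neutral element follows from $p(0)=0$ being the smallest element of $M$.

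For distributivity $(x_1+x_2)x_3=x_1x_3+x_2x_3$, one uses that the multiplication on $M$ is strictly order preserving on $M\smin\{0\}$ (since $M$ is cancellative bipotent) to see that the ordering of $p(x_1)p(x_3),p(x_2)p(x_3)$ mirrors the ordering of $p(x_1),p(x_2)$, except when $p(x_3)=0$ where both sides become $0$ by part (i); in the tie case $p(x_1)=p(x_2)$ one uses $p(x_1x_3)=p(x_1)p(x_3)=p(x_2)p(x_3)=p(x_2 x_3)$ and the fact that (3.1) then returns this common ghost value on both sides. With these checks, $(U,+,\cdot\,)$ is a commutative semiring. The element $e=1+1$ equals $p(1)=1_M$ by (3.1), so for every $x\in U$ we have $ex=1_M\cdot x\in M$ and $p(1_M\cdot x)=p(x)$, forcing $ex=p(x)$ since $p$ is the identity on $M$; hence $e$ is idempotent, $eU=M$ is bipotent, and $p$ is the ghost map. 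This verifies axioms (3.3$'$), (3.3$''$), (3.3) of \cite[\S3]{IKR1}, so $U$ is supertropical with addition rule precisely (3.1). The predomain property ($U\smin\{0\}$ closed under multiplication with cancellative ghost ideal) is part (i) plus the hypothesis on $M$. Uniqueness of the addition follows because in any supertropical semiring structure on $U$ with ghost map $p$, the sum $x_1+x_2$ is determined by $p(x_1),p(x_2)$ via the defining rule of supertropical semirings recalled in the Introduction, which is exactly (3.1).

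The one genuine obstacle is the associativity/distributivity case analysis; the rest is formal manipulation. The key trick in both is that whenever the maximum of the relevant $p$-values is attained uniquely, the answer is the corresponding $x_i$ (or $x_ix_3$), while when there is a tie among maxima, the answer lies in $M$ and can be computed using only $p$, where $M$ being a cancellative bipotent semidomain forces the required equalities.
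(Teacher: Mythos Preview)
Your proof plan is correct and follows essentially the same route as the paper's proof: first part (i) via $p$ and the semidomain hypothesis, then define the addition by the forced formula (3.1), verify the semiring axioms by case analysis on the ordering of the $p$-values (with cancellativity of $M\setminus\{0\}$ driving distributivity), compute $e=1_M$ and $ex=p(x)$, and conclude the supertropical axioms. The only place where you are slightly more terse than the paper is the tie case of distributivity: the paper makes explicit the identity $p(x_1)z = e x_1 z = p(x_1 z)$ (using $ey=p(y)$ twice) to match the two sides, which is the content behind your phrase ``returns this common ghost value on both sides.''
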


\begin{proof}
We proceed in several steps.
\begin{enumerate}\ealph

\item  If $x\in U$, then $p(x\cdot 0)=p(x)p(0)=p(x)\cdot 0=0. $ Thus
$x\cdot 0=0.$ \pSkip

\item  If $x,y\in U\setminus\{0\}$, then $p(x)\ne0,$ $p(y)\ne0;$
hence $p(xy)=p(x)p(y)\ne0,$ and   $xy\ne0.$ Thus,
$U\setminus\{0\}$ is closed under multiplication. \pSkip

\item  We are forced to \textit{define} addition on $U$ by the
rule \eqref{3.1} above (cf. \cite[Theorem 3.11]{IKR1}). Clearly
this extends the given addition on $M.$ We have
$1_U+1_U=p(1_U)=1_M.$ \pSkip

\item  Write $1_M=e,$ $1_U=1.$ For $x\in U$ we have
$$e\cdot x=p(e\cdot x)=p(e)\cdot p(x)=e\cdot p(x)=p(x).$$
Thus, $p(x)=e\cdot x$ for every $x\in U.$ \pSkip

\item We start out to verify that $U$ is a semiring. Obviously,
the addition on $U$ is commutative, and it is easily checked that
the addition is also associative. For $x\in U$ we have $x+0=x$ if
$p(x)>0,$ and $x+0=0 $ if $p(x)=0$ iff $x=0.$ Thus, $0=0_M$ is the
neutral element of the addition on $U.$ \pSkip

\item It remains to  verify distributivity. Let $x_1,x_2,z\in U$
be given. If $x_1=0$ then $x_1z=0,$ $x_1+x_2=x_2;$ hence
$$x_1z+x_2z=0+x_2z=x_2z,$$ and thus
$$x_1z+x_2z=(x_1+x_2)z.$$
The same holds if $x_2=0$, and clearly also if $z=0.$

Assume now that $x_1,x_2,z\in G:=M\setminus\{0\}.$ If
$p(x_1)<p(x_2)$ then $p(x_1z)<p(x_2z)$ since $p(x_iz)=p(x_i)p(z)$
and the monoid $G$ is cancellative. Thus, $x_1+x_2=x_2,$
$x_1z+x_2z=x_2z$, and we see again that $$(x_1+x_2)z=x_1z+x_2z.$$ By
symmetry this also holds if $p(x_1)>p(x_2).$ In the case
$p(x_1)=p(x_2),$ we have $p(x_1z)=p(x_2z),$ $x_1+x_2=p(x_1),$ and
$$x_1z+x_2z=p(x_1z)=ex_1z=p(x_1)z=(x_1+x_2)z.$$ Now distributivity
is proved in all cases. \pSkip

\item We have proved that $U$ is a semiring with $x+x=ex=p(x)$ for
every $x\in U,$ and thus $M=p(U)=eU.$ The axioms (3.3$'$),
(3.3$''$), (3.4) from \cite[\S3]{IKR1} are now evident.  Thus, $U$
is supertropical and $\nu_U=p.$ The semiring $U$ is a
supertropical predomain.
\end{enumerate}
\end{proof}

\thmref{thm3.1} supersedes  Construction 3.16 in \cite{IKR1} since
here we do not need to assume that $U\setminus M$ is closed under
multiplication. Every supertropical semiring $U$ with $eU$ a
cancellative bipotent semidomain arises in the way indicated in
the theorem.

\begin{examp}\label{examp10.1} We discuss again the construction
of the supertropical semiring $U=U(v)$ for a valuation $v: R\to
M$, given in  \cite[Example 4.5]{IKR1}. Let $\mfq :=v^{-1}(0)$
  the support of~$v,$ and let $U$ denote the disjoint union of the
  sets $R\setminus\mfq $ and $M.$ We introduce on $U$ a
  multiplication $\odot$ as follows: For $x,y\in R\setminus
  \mfq $ and $z,w\in M$, put $$x\odot y=xy, \quad   x\odot z=z\odot
  x=v(x)z, \quad z\odot w=zw.$$ It is immediate that in this way $U$
  becomes an abelian monoid with $U\odot M\subset M.$ The map $p:
  U\to M$ given by $p(x)=v(x)$ for $x\in R\setminus\mfq ,$
  $p(z)=z$ for $z\in M$ is a monoid homomorphism and
  $p^{-1}(0)=\{0\}.$ \thmref{thm3.1} tells us that with the
  addition
  $$x\oplus y:=\begin{cases} x&\ \text{if}\quad p(x)>p(y)\\
  y&\ \text{if}\quad p(x)<p(y)\\
  p(x)&\ \text{if}\quad p(x)=p(y)\end{cases}$$ the monoid $U$
  becomes a supertropical semiring. The map $\varphi: R\to U$ with $\varphi(a)=a$
  for $a\in R\setminus\mfq ,$ $\varphi(a)=0$ for
  $a\in\mfq $ turns out to be a supervaluation covering $v.$
  \end{examp}

  \section{Transmissive equivalence relations}\label{sec:4}

  If a surjective transmission $\al : U\to V$ is given, $V$ can
  be identified with the set $U/E(\al )$ of equivalence classes
  of the equivalence relation $E(\al )$ \footnote{Recall that
  $E(\al )$ is defined by $x\sim_{E(\al )}y$ iff
  $\al (x)=\al (y).$} in such a way that
  $\al =\pi_{E(\al )}.$ We now pose the following problem: For which equivalence
  relations $E$ on a supertropical semiring $U$ can the set $U/E$
  be equipped with the structure of a (supertropical) semiring in
  such a way that $\pi_E: U\to U/E$ is a transmission?

  We first study the case $U=eU.$

  $U$ is a bipotent semiring, in other words, $U$ is a totally
  ordered monoid with absorbing smallest element $0$, cf. \cite[\S1]{IKR1}.

  Assume more generally that $M$ is a totally ordered set and $E$
  is an equivalence relation on $M.$ We want to install a total ordering on the set
  $M/E$  in such a way that the map $$\pi_E: M\to M/E, \qquad x\mapsto
  [x]_E,$$ is order preserving (in the weak sense; $x\le
  y\Rightarrow \pi_E(x)
\le \pi_E(y)).$ Thus we want that, if $\xi_1,\xi_2\in M/E$ and
$x_1\in\xi_1,$ $x_2\in\xi_2,$ then
$$x_1\le x_2\ \Rightarrow\ \xi_1\le \xi_2,$$
or, equivalently,
$$\xi_1>\xi_2\ \Rightarrow\ x_1>x_2.$$

It is clear  that such a total ordering on $M/E$ exists iff the
following holds. Given $\xi_1,\xi_2\in M/E$, either $x_1<x_2$ for
all $x_1\in\xi_1,$ $x_2\in \xi_2,$ or $x_1>x_2$ for all
$x_1\in\xi_1,$ $x_2\in \xi_2,$ or $\xi_1=\xi_2.$ More succinctly,
this condition can be written as follows:
\begin{align*}
(\text{OC}): \quad   & \text{If $x_1,x_2,x_3,x_4\in M,$} \text{
and
$x_1\le x_2,$ $x_3\le x_4,$ $x_1\sim_E x_4,$ $x_2\sim_E x_3,$} \\
& \text{ then $x_1\sim x_2$. }
\end{align*}
(Hence all $x_i$ are $E$-equivalent.)

 If an equivalence relation
$E$ on the totally ordered set $M$ obeys
 the rule (OC), we call $E$ \bfem{order compatible}.

It is sometimes useful to view order compatibility as a convexity
property. A subset $Y$ of $M$ is called \bfem{convex} (in $M$), if
for any $y_1,y_2\in Y$ and $x\in M$ with $y_1<x<y_2$, also $x\in
M.$

\begin{remark}\label{rem4.1}
An equivalence relation $E$ on the totally ordered set $M$ is
order compatible iff every equivalence class of $E$ is convex in
$M$.
\end{remark}

\begin{proof} a) If $y_1<x<y_2$ and $y_1\sim_E y_2,$ then (OC)
implies $y_1\sim_E x.$ (Take there $x_2 = x_3$.)\pSkip

 b) Assume that the equivalence
classes of $E$ are convex. We verify (OC). Let $x_1,x_2,x_3,x_4\in
M$ be given with $x_1\le x_2,$ $x_3\le x_4,$ and $x_1\sim_E x_4,$
$x_2\sim_E x_3.$

\textit{Case 1}. $x_2\le x_4.$ Now $x_1\le x_2\le x_4,$ and hence
$x_1\sim_E x_2.$

\textit{Case 2}. $x_2> x_4.$ Now $x_3\le x_4\le x_2,$ and hence
$x_4\sim_E x_2,$ and thus again $x_1\sim_E x_2.$\end{proof}

We present a proposition which is quite obvious from the initial
considerations on order compatibility  given above.

\begin{prop}\label{prop4.2} Let $M$ be a bipotent semiring and
$E$ an equivalence relation on the set~$M.$ There exists a
(unique) structure of a (bipotent) semiring on the set
\footnote{Recall that, for any set $Y\subset  U$ we write
$Y/E:=\{[y]_E\ds | y\in Y\}$.} $M/E$ such that the natural map
$\pi_E: M\to M/E,$ $x\mapsto [x]_E$ is a semiring homomorphism iff
$E$ is multiplicative and order compatible. In this case the
multiplication on $M/E$ is given by the rule $(x,y\in M)$
$$[x]_E\cdot [y]_E=[x\cdot y]_E,$$ and the ordering by the rule
$(\xi,\eta\in M/E)$
$$\xi\le \eta\quad\Leftrightarrow\quad \exists x\in\xi,\
y\in\eta\quad\text{with}\quad x\le y.$$
\end{prop}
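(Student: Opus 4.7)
\noindent\emph{Proof proposal.} The plan is to establish both directions of the ``iff'', with uniqueness dropping out of surjectivity of $\pi_E$. If $\pi_E$ is to be a semiring homomorphism, then the operations on $M/E$ are forced: $[x]_E + [y]_E = [x+y]_E$, $[x]_E \cdot [y]_E = [xy]_E$, $0_{M/E} = [0]_E$, $1_{M/E} = [1]_E$, which gives uniqueness at once. For the \textbf{necessity} direction, well-definedness of multiplication immediately forces $E$ to be multiplicative. Moreover $M/E$ inherits bipotence (since $[1]_E + [1]_E = [1+1]_E = [1]_E$), so it carries the natural order $\xi \le \eta \Leftrightarrow \xi + \eta = \eta$ of any bipotent semiring; under this order $\pi_E$ is order preserving, because $x \le y$ in $M$ means $x+y=y$, whence $[x]_E + [y]_E = [y]_E$. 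From this I deduce convexity of $E$-classes: given $x_1 \le x_2 \le x_3$ with $x_1 \sim_E x_3$, order preservation yields $[x_1]_E \le [x_2]_E \le [x_3]_E = [x_1]_E$, forcing $x_1 \sim_E x_2$. By Remark~\ref{rem4.1}, $E$ is order compatible.

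For the \textbf{sufficiency} direction, assume $E$ is multiplicative and order compatible. Multiplicativity makes $[x]_E \cdot [y]_E := [xy]_E$ a well-defined commutative monoid operation on $M/E$, with identity $[1]_E$ and absorbing element $[0]_E$. Order compatibility, via the discussion preceding Remark~\ref{rem4.1}, furnishes a unique total order on $M/E$ making $\pi_E$ order preserving, with $[0]_E$ the smallest element; I then define addition on $M/E$ to be the maximum in this order. Convexity of classes yields $\pi_E(\max(x,y)) = \max([x]_E,[y]_E)$ for all $x,y \in M$, so $\pi_E$ automatically preserves addition, while $\pi_E(0) = [0]_E$ and $\pi_E(1) = [1]_E$ are trivial. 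Under these definitions the rules stated in the proposition hold by construction.

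The principal remaining point, and the main obstacle, is distributivity in $M/E$, where the multiplicative and order structures must cooperate. Given $\xi,\eta,\zeta \in M/E$ with $\xi \le \eta$, I need $\xi\zeta \le \eta\zeta$, which then yields $(\xi+\eta)\zeta = \eta\zeta = \xi\zeta + \eta\zeta$. The case $\xi = \eta$ is trivial. If $\xi < \eta$, convexity of classes lets me pick representatives $x \in \xi$, $y \in \eta$ with $x < y$, and any $z \in \zeta$; monotonicity of multiplication in the bipotent semiring $M$ (a direct consequence of distributivity in $M$: $x \le y$ implies $xz + yz = (x+y)z = yz$, hence $xz \le yz$) then gives $xz \le yz$, so $\xi\zeta \le \eta\zeta$ as required. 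This completes the verification that $M/E$ is a bipotent semiring and that $\pi_E$ is a semiring homomorphism whose explicit formulas coincide with those in the statement.
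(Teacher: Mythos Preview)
Your proof is correct and considerably more explicit than the paper's. The paper dispatches the proposition in one sentence by invoking a fact from \cite[\S1]{IKR1}: a map between bipotent semirings is a semiring homomorphism iff it is multiplicative, sends $0$ to $0$, $1$ to $1$, and is compatible with the orderings. With that characterization in hand, and with the preliminary discussion of order compatibility already establishing the total order on $M/E$, both directions of the ``iff'' are immediate. You instead work everything out from scratch, including the verification of distributivity on the quotient; this buys self-containment at the cost of length, and makes visible exactly where order compatibility and multiplicativity each enter.

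One small slip: in the necessity direction your parenthetical ``(since $[1]_E + [1]_E = [1+1]_E = [1]_E$)'' does not establish bipotence of $M/E$ --- it only shows addition is idempotent. The correct reason is that $\pi_E$ is a surjective semiring homomorphism, so $[x]_E + [y]_E = [x+y]_E \in \{[x]_E,[y]_E\}$ for all $x,y$, and bipotence is inherited. This is trivial to fix and does not affect the rest of your argument.
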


\begin{proof}
Just notice that a map between bipotent semirings is a semiring
homomorphism iff it is multiplicative, sends 0 to 0, 1 to 1, and
is compatible with the orderings
(cf.~\cite[\S1]{IKR1}).\end{proof}

We  turn to equivalence relations on supertropical semirings
instead of just bipotent semirings.

\begin{defn}\label{defn4.4}
Let $U$ be a supertropical semiring. We call an equivalence
relation $E$ on~$U$ \bfem{transmissive} if on the set $U/E$ there
exists a semiring structure such that $U/E$ is supertropical and
the map $\pi_E: U\to U/E,$ $x\mapsto [x]_E$ is transmissive.
\end{defn}

We point out that, if $E$ is transmissive, the semiring structure
on $U/E$ is uniquely determined by the semiring structure of $U$
and the relation $E.$ This is clear from the following reasoning.

Assume a surjective transmission $\al : U\to V$ is given. Let
$E:=E(\al ).$ Since the map $\al $ is multiplicative, the
equivalence relation $E$ has to be multiplicative, and the
multiplication on $V$ is determined by $U$ and $\al $, since $\al
(x)\cdot\al (y)=\al (xy).$ We have $\al (e_U)=e_V,$ and $\al $
restricts to a surjective homomorphism $eU\to eV$ of bipotent
semirings. Thus, the restricted equivalence relation $E|eU$ is
order compatible, and the ordering on $eV$ is determined by the
ordering of $eU$ and the map $\al .$

It follows that the addition on $V$ is also determined by $U$ and
$\al $, since it can be expressed in terms of the multiplication
on $V,$ the element $e_V=\al (e_U)$ and the ordering of $eV$ (cf.
\cite[Theorem 3.11]{IKR1}).

Notice also that, if $x\in U$ and $ex\sim_E0,$ then $x\sim_E0$,
since $e\al (x)= \al (e  x)=0$ implies $\al (x)=0.$

We summarize these considerations as follows:

\begin{prop}\label{prop4.5}
Let $U$ be a supertropical semiring, $M:=eU,$ and assume that $E$
is a transmissive equivalence relation  on $U$. Then the following
is true: \begin{align*}& TE1: \  E
\quad\text{is   multiplicative}.\\
&TE2:\  \text{The equivalence relation $E|M$ is order
compatible}.\\ &TE3:\ \text{If}\quad x\in U  \quad\text{and}\quad
ex\sim_E0,\quad\text{then}\quad x\sim_E0.
\end{align*}
The structure of the supertropical semiring $U/E$ is uniquely
determined by the following data.

\begin{enumerate} \ealph
    \item[a)]  If $x,y\in U,$ then $[x]_E\cdot[y]_E=[xy]_E.$

\item[b)] The ghost ideal of $U/E$ is
$$M/E:=\{[x]_E\ds |x\in U\}.$$

\item[c)]  If $x,y\in M,$ then
$$x\le y\quad\Rightarrow\quad [x]_E\le[y]_E.$$
\end{enumerate}
\end{prop}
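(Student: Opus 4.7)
The plan is to set $\alpha := \pi_E : U \to U/E$, with $V := U/E$ carrying the supertropical structure given by the hypothesis that $E$ is transmissive. Verifying TE1--TE3 is then a matter of reading off properties of the transmission $\alpha$ together with the basic observation (recalled in the Introduction) that $ez = 0$ forces $z = 0$ in any supertropical semiring.

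TE1 is immediate from axiom TM1: since $\alpha$ is multiplicative, $x \sim_E x'$ yields $\alpha(xy) = \alpha(x)\alpha(y) = \alpha(x')\alpha(y) = \alpha(x'y)$ for every $y \in U$, i.e.\ $xy \sim_E x'y$. TE3 is equally short: if $ex \sim_E 0$, then $e_V \alpha(x) = \alpha(ex) = 0$, and the ghost-kills-tangible property in $V$ forces $\alpha(x) = 0$, hence $x \sim_E 0$. For TE2, I would invoke the fact that the ghost part $\alpha^\nu = \alpha|_M : M \to eV$ is a semiring homomorphism between bipotent semirings (part of the definition of a transmission in \cite[\S5]{IKR1}), and any such homomorphism is order-preserving. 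Given $x_1 \le x_2 \le x_3$ in $M$ with $x_1 \sim_E x_3$, applying $\alpha^\nu$ yields $\alpha(x_1) \le \alpha(x_2) \le \alpha(x_3) = \alpha(x_1)$, forcing $x_2 \sim_E x_1$; by Remark \ref{rem4.1} this convexity of $E$-classes in $M$ is precisely order compatibility. (Alternatively one can cite Proposition \ref{prop4.2} applied to the surjective homomorphism $\alpha^\nu : M \twoheadrightarrow eV$.)

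For the uniqueness of the semiring structure on $U/E$, each of (a)--(c) is forced: (a) is multiplicativity of $\alpha$; (b) holds because $e_V = \alpha(e_U)$, so $eV = \alpha(eU) = \{[ex]_E \mid x \in U\} = M/E$ by surjectivity of $\alpha$; and (c) is order-preservation of $\alpha|_M$. Once the multiplication on $U/E$, the distinguished idempotent $e_{U/E} = [e_U]_E$, and the total ordering on the ghost ideal $M/E$ are pinned down by (a)--(c), the addition on $U/E$ is completely determined by the formula for $x+y$ displayed in the Introduction (cf.\ \cite[Theorem 3.11]{IKR1}). Hence the entire supertropical semiring structure on $U/E$ is forced.

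I do not expect any serious obstacle; the proof is essentially bookkeeping with the axioms TM1--TM5 and the uniqueness of addition in a supertropical semiring given the multiplication, the element $e$, and the ghost-ideal ordering. The only subtle point is TE3, which depends crucially on the implication $ez = 0 \Rightarrow z = 0$; without this property the argument would break down.
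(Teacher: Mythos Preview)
Your proof is correct and follows essentially the same route as the paper: the paper's argument (given in the paragraphs immediately preceding the proposition) likewise sets $\alpha=\pi_E$, reads off TE1 from multiplicativity of $\alpha$, TE2 from the fact that $\alpha|_M$ is a semiring homomorphism of bipotent semirings, TE3 from $e\alpha(x)=0\Rightarrow\alpha(x)=0$, and then invokes \cite[Theorem~3.11]{IKR1} to conclude that the multiplication, the element $e_{U/E}=[e_U]_E$, and the ordering on $M/E$ force the addition and hence the whole semiring structure. Your explicit convexity check for TE2 is a harmless elaboration of what the paper leaves implicit.
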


\begin{defn}\label{defn4.5}
We call an equivalence relation  on $U$ which has the properties
TE1-TE3 a \textbf{TE-relation}.
\end{defn}

Not every $TE$-relation is transmissive as will be clear from
\cite{IKRMon}.  Something ``non-universal" has to be added to
guarantee that a given $TE$-relation is transmissive. We now show
one such condition.

\begin{defn}\label{defn4.6}
We call a multiplicative equivalence relation $E$ on $U$
\bfem{ghost-cancellative} if the following holds.
 \begin{equation}
 \forall x,y,z\in eU: \ \text{If}\ xz\sim_Eyz,\quad\text{and}\quad
 z\nsim_E0,\quad\text{then}\quad x\sim_Ey.\tag{Canc}
\end{equation}

This means that the monoid $(M/E)\setminus\{0\}$ is cancellative.
$\{$If $U=M$, we usually say ``cancellative" for
``ghost-cancellative''.$\}$ \end{defn}

We arrive at the main result of this section.

\begin{thm}\label{thm4.7}
Let $U$ be a supertropical semiring and $M:=eU$ its ghost ideal.
Assume that $E$ is a TE-relation on $U$. Assume also that $E$ is
ghost-cancellative. Then $E$ is transmissive.\end{thm}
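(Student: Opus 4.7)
The plan is to construct the supertropical semiring structure on $U/E$ by invoking Theorem \ref{thm3.1}, and then to check that the four transmission axioms hold for $\pi_E$. The central observation is that the ghost map for $U/E$ should be $p\colon [x]_E \mapsto [ex]_E$, so the three axioms TE1--TE3 together with ghost-cancellativity are exactly tailored to supply the hypotheses of Theorem \ref{thm3.1}.

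First I would analyze $M/E$. Since TE1 makes $E|M$ multiplicative and TE2 makes it order compatible, \propref{prop4.2} endows $M/E$ with a structure of bipotent semiring for which the restriction $\pi_E|_M\colon M \to M/E$ is a semiring homomorphism. Ghost-cancellativity is precisely the statement that $(M/E)\setminus\{0\}$ is cancellative, so $M/E$ is a cancellative bipotent semidomain (absence of zero divisors being automatic in a cancellative bipotent semiring).

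Next I would set up the monoid data needed for Theorem \ref{thm3.1}. By TE1, the quotient $U/E$ is an abelian monoid under $[x]_E\cdot [y]_E := [xy]_E$, and $M/E$ is a monoid ideal of it, since $M$ is an ideal of $U$. Define $p\colon U/E \to M/E$ by $p([x]_E):=[ex]_E$; this is well defined by TE1, it is multiplicative (because $e$ is idempotent, so $(ex)(ey)=e(xy)$), it sends $[1]_E$ to $[e]_E=1_{M/E}$, and it satisfies $p(\mu)=\mu$ for $\mu\in M/E$ (take a representative $y\in M$, then $ey=y$). Finally, $p^{-1}(0)=\{0\}$ is just a reformulation of TE3. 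Theorem \ref{thm3.1} now yields a unique supertropical (pre)domain structure on $U/E$, extending the bipotent semiring structure of $M/E$, with $p$ as ghost map and with addition prescribed by formula \eqref{3.1}.

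It remains to verify that $\pi_E$ is a transmission to this $U/E$. The axioms TM1--TM3 (preservation of $0$, of $1$, and multiplicativity) are immediate from the construction. For TM4 one computes $e_{U/E}=[1]_E+[1]_E=p([1]_E)=[e_U]_E=\pi_E(e_U)$, using the description of addition given by Theorem \ref{thm3.1} in the case $p(\xi_1)=p(\xi_2)$. For TM5, the restriction $\pi_E|_M\colon M \to M/E = e(U/E)$ is already known from the first paragraph to be a semiring homomorphism. The main obstacle is essentially verifying the compatibility of the $p$ defined on $U/E$ with the bipotent structure already put on $M/E$, and this is automatic from $ey=y$ for $y\in M$; everything else is a bookkeeping check against the axioms of Theorem \ref{thm3.1}.
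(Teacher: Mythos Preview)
Your proof is correct and follows essentially the same route as the paper: use \propref{prop4.2} to put a bipotent semiring structure on $M/E$, invoke \thmref{thm3.1} with the monoid $U/E$, the ideal $M/E$, and the map $p([x]_E)=[ex]_E$ to obtain the supertropical structure, and then check TM1--TM5 for $\pi_E$. The only cosmetic difference is that you spell out the TM4 computation and the passage from cancellativity to the semidomain property, whereas the paper declares these evident.
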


\begin{proof} Let $\olU  $ denote the set $U/E$, and, for any
$x\in U,$ let $\bar x =[x]_E.$ \propref{prop4.2} tells us that,
due to $TE1$ and $TE2$, we have the structure of a bipotent
semiring on the set
$$\olM :=M/E:=\{\bar x\ds | x\in M\},
$$
such that the map $M\to\olM $, $x\mapsto \bar x,$ is a semiring
homomorphism. It has the unit element $\bar e$ $(e:=e_U)$ and the
zero element $\overline 0.$ The assumption (Canc) means that $\olM
$ is cancellative. We have $\olU  \cdot\olM \subset \olM.$ The map
$p: \olU  \to \olM ,$ $p(\bar x):=\bar e\bar x=e\bar x$ is a
monoid homomorphism with $p(\bar x)=\bar x$ for $x\in M.$ The
assumption
 $TE3$ means that $p^{-1}(\overline 0)=\{\overline 0\}.$ Thus,
\thmref{thm3.1} applies and gives us the structure of a
supertropical semidomain on the set $\olU  $ with ghost map
$\nu_{\olU  }=p$ and ghost ideal $\olM .$

It remains to prove that the map $\pi_E: U\to \olU  ,$ $x\mapsto
\bar x,$ is a transmission. We have to check the axioms
$TM1$-$TM5$ in \cite[\S5]{IKR1}. The first four axioms $TM1$-$TM4$
are evident. $TM5$ holds, since indeed the map $M\to\olM $,
$x\mapsto \bar x,$ is a semiring homomorphism.
\end{proof}

This theorem allows a second approach to the key result of \S1,
Theorems \ref{thm1.12} and \ref{thm1.13}, which seems to be faster
than the route taken in \S1 (but perhaps gives less insight).

\begin{examp}\label{examp4.8}
We return to the assumptions of Theorems \ref{thm1.12} and
\ref{thm1.13}: $U$ is a supertropical semiring, and $\gm $ is a
surjective homomorphism from $M:=eU$ to a cancellative bipotent
semidomain $M'.$ We \bfem{define} a binary relation $F:=F(U,\gm )$
on $U,$ decreeing
\begin{align*} x_1\sim_F x_2\quad\Leftrightarrow\quad&\text{either}\quad
x_1=x_2,\quad\text{or}\quad \gm (ex_1)=\gm (ex_2),\ x_1=ex_1,\
x_2=ex_2,\\ &\text{or}\quad \gm (ex_1)=\gm (ex_2)=0.
\end{align*}


One verifies directly in an easy way that $F$ is an equivalence
relation. Clearly $F$ is multiplicative. The restriction
$$F|M:=F\cap (M\times M)$$ is order compatible, since $\gm $
preserves the ordering (in the weak sense). For $x\in U$ we have
$x\sim_F 0$ iff $\gm (ex)=0$ iff $ex\sim_E 0.$ Thus axioms
$TE1$--$TE3$ are valid. The semiring $M/F$ is isomorphic to $M'$
via $\gm ,$ and hence is a cancellative semidomain. Now
\thmref{thm4.7} tells us that the map $\pi_F$ is transmissive.

Then the proof of Theorem \ref{thm1.13} gives us that $\pi_F$ is a
pushout transmission. \{One does not need to know for this that
$\pi_F$ is initial.\} Alternatively, one may use a more general
result on pushout transmissions given below (\thmref{thm4.13}). In
particular, in Notation~\ref{notation1.7},
$$ F(U,\gm) = E(U,\gm).$$

\end{examp}

In \cite[\S8]{IKR1} we introduced \emph{orbital} equivalence
relations.  Typically a  relation $F(U,\gm )$, as just considered,
is almost never orbital.  We now ask for those orbital equivalence
relations which are transmissive.

\begin{lem}\label{lem4.9}
Let $M$ be a totally ordered set and $H$ an (abelian)
semigroup\footnote{All semigroups occurring in this paper are
assumed to be abelian.} which operates on $M$ in an order
preserving way. $\{$If $x,y\in M,$ $h\in H,$ and $x\le y$, then
$hx\le hy.\}$ We introduce on $M$ an equivalence relation
$E:=E(H)$ as follows:
$$x\sim_Ey\quad\Leftrightarrow\quad \exists g,h\in H: gx=hy.$$
Assume that for every $x\in M$ the orbit $Hx$ is convex in $M.$
Then $E$ is order compatible.
\end{lem}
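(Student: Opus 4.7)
My plan is to invoke Remark 4.1 and show that every $E$-equivalence class is convex in $M$. By the definition of $E$, one quickly sees that $x \sim_E y$ is equivalent to $Hx \cap Hy \neq \emptyset$; the fact that $E$ is actually an equivalence relation (transitivity in particular) uses commutativity of $H$: from $g_1 x = h_1 y$ and $g_2 y = h_2 z$ one gets $(g_2 g_1) x = (h_1 h_2) z$. I would note this briefly and then move to the convexity argument.

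So let $y_1, y_2 \in [x]_E$ and suppose $z \in M$ satisfies $y_1 < z < y_2$. The goal is $z \sim_E y_1$. Since $y_1 \sim_E y_2$, pick $g, h \in H$ with $g y_1 = h y_2$. Because the action is order preserving and $y_1 < z < y_2$, applying $g$ to the left inequality and $h$ to the right gives
$$ h z \ \leq \ h y_2 \ = \ g y_1 \ \leq \ g z. $$
Thus $g y_1$ lies between two elements $hz, gz$ of the orbit $Hz$ (in the weak sense, with equality at either end placing $gy_1 \in Hz$ already).

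The key step is now to apply the convexity hypothesis to the orbit $Hz$. Either $hz = gz$, in which case $gy_1 = hz \in Hz$ trivially, or $hz < gz$; in the latter case, $gy_1$ either equals one of the endpoints or lies strictly between, and convexity of $Hz$ in $M$ forces $gy_1 \in Hz$. Either way there exists $k \in H$ with $gy_1 = kz$, so $Hy_1 \cap Hz \neq \emptyset$, i.e., $y_1 \sim_E z$, and hence $z \in [y_1]_E = [x]_E$. By Remark \ref{rem4.1}, this establishes order compatibility of $E$.

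The only genuinely delicate point is packaging the inequality $hz \leq gy_1 \leq gz$ together with the convexity hypothesis cleanly when one of the two inequalities is an equality; but since the endpoints themselves lie in $Hz$, the argument goes through uniformly and no case analysis beyond a one-line remark is required.
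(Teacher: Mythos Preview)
Your proof is correct and follows essentially the same approach as the paper: reduce via Remark~\ref{rem4.1} to convexity of equivalence classes, use the relation $gy_1=hy_2$ together with order-preservation of the action to sandwich a point between two elements of a single orbit, and then invoke the convexity hypothesis. The only cosmetic difference is that the paper applies $h$ to the whole chain $y_1<z<y_2$ and uses convexity of $Hy_1$, whereas you apply $g$ and $h$ to different parts of the chain and use convexity of $Hz$; both work equally well.
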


\begin{proof}
We verify that every equivalence class of $E$ is convex, and then
will be done (cf.~Remark~\ref{rem4.1}). Let $x_1,x_2,y\in M$ be
given with $x_1<y<x_2,$ and $x_1\sim_Ex_2.$ There exist elements
$h_1,h_2$ in $H$ with $h_1x_1=h_2x_2.$ This implies
$$h_2x_1\le h_2y\le h_2x_2=h_1x_1.$$
Since $Hx_1$ is convex, there exists some $h_3\in H$ with
$h_2y=h_3x_1;$ hence $y\sim_E x_1.$\end{proof}

If $G$ is a (totally) ordered (abelian) cancellative semigroup, we
denote the group envelope of $G$ (given in the well-known way by
fractions $\frac{g_1}{g_2}$ with $g_1,g_2\in G)$ by $\langle
G\rangle.$ We equip~$\langle G\rangle $ with the unique ordering
which extends the given ordering of $G$ and is compatible with
multiplication.

\begin{thm}\label{thm4.10}
Let $U$ be a supertropical semiring with ghost ideal $M:=eU,$ and
let $H$ be a submonoid of $$S(U):=\{x\in U \ | \  x\mathcal
T(U)\subset \mathcal T(U)\}.$$ Finally, let
$$\mfq :=\{x\in M \ | \ \exists h\in H: hx=0\}=\{x\in
M \ | \ x\sim_H0\},$$ which is an ideal of $M.$ Assume that $M$ is
a semidomain.
\begin{enumerate}\item[a)] The semigroup $H$ operates on $M,$ and
hence on $M\setminus\mfq ,$ by multiplication in an order
preserving way. Either $\mfq $ is a lower set and a prime ideal of
$M,$ or $\mfq =M.$ \item[b)] If $\mfq \ne M,$ and the monoid
$M\setminus\mfq $ is cancellative, and the submonoid $\nu_U(H)=He$
of $M\setminus\mfq $ is convex in the ordered abelian group
$\langle M\setminus\mfq \rangle,$ then $E(H)$ is transmissive.
\item[c)] If $\mfq =M,$ then $U/E$ is the null ring, and hence
$E(H)$ is again transmissive.
\end{enumerate}
\end{thm}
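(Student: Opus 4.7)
My plan is to handle the three assertions in order, with the bulk of the work in (b), where I would invoke Theorem~\ref{thm4.7}.

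For part (a), the action of $H$ on $M$ is just multiplication, since $M$ is an ideal of $U$, and it is order preserving because multiplication in the totally ordered monoid $M$ is. I would verify that $\mathfrak{q}$ is an ideal directly: it is closed under bipotent sums (sums are maxima of summands) and absorbs products from $M$ via $(hx)y = h(xy)$. It is a lower set because if $y\leq x$ and $hx=0$, then $hy\leq hx=0$. When $\mathfrak{q}\neq M$, primeness follows from the semidomain hypothesis: $h(xy)=(hx)y=0$ forces $hx=0$ or $y=0$.

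For part (c), if $\mathfrak{q}=M$ then applying the definition to $1_M=e$ yields $h\in H$ with $\nu(h)=he=0$, and the supertropical axiom $ex=0\Rightarrow x=0$ forces $h=0$; so $0\in H$. Then $0\cdot x=0=0\cdot y$ gives $x\sim_{E(H)} y$ for all $x,y\in U$, so $U/E$ is a singleton with the unique trivial semiring structure, and $\pi_E$ is vacuously a transmission.

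For part (b), I would check TE1--TE3 and the ghost-cancellativity condition of Theorem~\ref{thm4.7}. A useful preliminary observation: $\mathfrak{q}\neq M$ forces $0\notin H$ (by the argument in (c)), hence $\nu(h)\neq 0$ for every $h\in H$; then for $x\in M$ the equation $hx=\nu(h)x=0$ and the semidomain hypothesis force $x=0$, so in fact $\mathfrak{q}=\{0\}$ in case (b). Axiom TE1 is immediate from $gx=hy\Rightarrow g(xz)=h(yz)$. Axiom TE3 follows by observing that $g(ex)=0$ implies $\nu(gx)=g(ex)=0$, hence $gx=0$ by the supertropical axiom, so $x\sim_E 0$. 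Ghost-cancellativity reduces to the hypothesized cancellation in $M\setminus\{0\}$ applied to $(gx)z=(hy)z$: if $gx\neq 0$, cancellation yields $gx=hy$; the degenerate case $gx=0$ forces $hy=0$ (since $z\neq 0$) and hence $x=y=0$ via $\mathfrak{q}=\{0\}$.

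The main obstacle is TE2, the order compatibility of $E|M$, for which I would appeal to Lemma~\ref{lem4.9}. Since the $H$-action on $M$ factors through $\nu_U$ and coincides with multiplication by $K:=\nu_U(H)=He\subset M\setminus\{0\}$, it suffices to show that each orbit $Kx$ is convex in $M$. For $x=0$ the orbit is $\{0\}$; for $x\in M\setminus\{0\}$, any $z\in M$ with $k_1 x\leq z\leq k_2 x$ (for $k_i\in K$) must be nonzero (otherwise the semidomain property would give $x=0$), and in the group envelope $\langle M\setminus\{0\}\rangle$ we then have $k_1\leq z/x\leq k_2$; the standing convexity of $K$ in this group yields $z/x\in K$, hence $z=(z/x)x\in Kx$. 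With all four conditions of Theorem~\ref{thm4.7} verified, $E(H)$ is transmissive.
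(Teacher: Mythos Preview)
Your proof is correct and follows the same overall architecture as the paper's: for (b) you verify TE1--TE3 and ghost-cancellativity and invoke Theorem~\ref{thm4.7}, and for TE2 you appeal to Lemma~\ref{lem4.9} via convexity of orbits in the group envelope.

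The notable difference is your preliminary observation in (b) that $\mathfrak q\ne M$ actually forces $\mathfrak q=\{0\}$. Your reasoning is sound: $\mathfrak q\ne M$ excludes $0\in H$, hence every $h\in H$ has $eh\ne0$; since for $x\in M$ one has $hx=(eh)x$, the semidomain hypothesis on $M$ then gives $hx=0\Rightarrow x=0$. The paper does not make this observation and instead carries a general $\mathfrak q$ through the argument, splitting both the TE2 verification and the ghost-cancellativity check into the cases $x\in\mathfrak q$ versus $x\notin\mathfrak q$. Your route is more economical and makes the remaining computations (orbit convexity in $\langle M\setminus\{0\}\rangle$, cancellation in $M\setminus\{0\}$) cleaner; the paper's version, by contrast, would survive a weakening of the semidomain hypothesis (it only really uses that $\mathfrak q$ is prime and a lower set), which may explain why the case split was retained. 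Your argument for (c) via $0\in H$ is also a valid alternative to the paper's, which instead reuses the TE3-style step $ex\sim_H 0\Rightarrow x\sim_H 0$.
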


\begin{proof} a) If $x_1,x_2\in M$,\ $h\in H,$ and $x_1\le x_2,$
then $x_1+x_2=x_2;$ hence $hx_1+hx_2=hx_2,$ and thus $hx_1\le
hx_2.$ If $x\in\mfq ,$ $y\in M$ and $y\le x,$ there exists some
$h\in H$ with $hx=0.$ We have $hy\le hx;$ hence $hy=0,$ and thus
$y\in\mfq .$ Thus $\mfq $ is a lower set of $M.$ Clearly,
$h(M\setminus\mfq )\subset M\setminus\mfq $ for every $h\in H.$

If $x,y\in M$ are given with $xy\in\mfq ,$ then there exists some
$h\in H$ with $hxy=0.$ Since $M$ is a semidomain, it follows that
$hx=0$ or $y=0,$ and hence $x\in\mfq $ or $y\in\mfq .$ This proves
that the ideal $\mfq $ of $M$ is prime. \pSkip

b) We will use \thmref{thm4.7}. The equivalence relation $E(H)$ is
multiplicative. For any $x\in U$ with $ex\sim_H0$, there exists
some $h\in H$ with $e(hx)=h(ex)=0.$ This implies $hx=0,$ and hence
$x\sim_H0.$ Thus $E(H) $ obeys $TE1$ and $TE3.$

We verify $TE2$ by proving that every equivalence class of
$E(H)|M$ is convex. Let $x_1,x_2,x_3\in M$ be given with
$x_1<x_2<x_3$ and $x_1\sim_H x_3.$ We need to be convinced that
$x_1\sim_H x_2.$

 \textit{Case} 1.  $x_1\in\mfq ,$ i.e., $x_1\sim_H0.$ Then
 $x_3\sim_H0.$ Since $\mfq $ is a lower set, we conclude
 that $x_2\sim_H0,$ and hence $x_1\sim_Hx_2.$

  \textit{Case} 2. $x_1\notin\mfq .$ Now all $x_i$ lie in
  $M\setminus \mfq ,$ since $M\setminus\mfq $ is an
  upper set. We verify that for every $x\in M\setminus\mfq $
  the orbit $Hx$ is convex in $M\setminus\mfq .$ Then
  \lemref{lem4.9} will tell us that the restriction of $E(H)$ to
  $M\setminus\mfq $ is order compatible. This will imply that
  $x_1\sim_Hx_2,$ as desired.

  Let $x,y\in M\setminus\mfq $ and $h_1,h_2\in H$ be given
  with $h_1x\le y\le h_2x.$ In the ordered abelian group $\langle
  M\setminus\mfq \rangle,$ we have $h_1\le yx^{-1}\le h_2.$
  By our convexity hypothesis, this implies $yx^{-1}=h_3\in H.$
  Thus $y=h_3x\in Hx,$ as desired.
  $TE2$ is verified.

  It remains to check that $E(H)$ is ghost-cancellative. Let
  $x,y,z\in M$ be given with $xz\sim_H yz,$ $z\nsim _H0.$ Thus
  $z\notin \mfq .$ We have elements $h_1,h_2$ in $H$ with
  $h_1xz=h_2yz.$

  If $x\in\mfq ,$ then $h_2yz\in\mfq ,$ and hence
  $y\in\mfq ,$ since $\mfq $ is prime. Thus $x\sim_Hy$
  in this case. The same holds if $y\in\mfq .$ Assume
  finally that $x,y\in M\setminus\mfq .$ The assumption that
  the monoid $M\setminus\mfq $ is cancellative implies that
  $h_1x=h_2y;$ hence, $x\sim_Hy$ again.

  Now \thmref{thm4.7} tells us that indeed $E(H)$ is
  transmissive.
  \pSkip

c) If $\mfq =M$ then $ex\sim_H0$ for every $x\in U,$ and hence
$x\sim_H0$ by an argument from (b) above. Thus $U/E(H)=\{0\}.$
  \end{proof}

  \begin{examp}\label{examp4.11} In the case that $U$ is a
  supertropical semifield, $M=\Gamma\cup\{0\}$ with $\Gamma$ an
  ordered abelian group, the situation addressed in
  \thmref{thm4.10} reads as follows:

  Let $H$ be a subgroup of $\mathcal T(U)$ whose image $\Delta:=He$
  in $\Gamma$ is convex in $\Gamma.$ Then $U/E(H)$ is just the
  orbit space $U/H$ (in the traditional sense), and $\mfq =\{0\}.$ We have
  $$\mathcal T(U/H)=\mathcal T(U)/H,\quad \mathcal
  G(U/H)=\Gamma/\Delta,\quad e_{U/H}=He.$$
  The map $\pi_H$ from $U$ to $U/E(H)$ sends an element $x$ of $U$
  to $Hx.$ It is a transmission. It covers the semiring
  homomorphism
  $$\gm _\Delta:\Gamma\cup\{0\}\to \Gamma/\Delta\cup\{0\},$$
  which sends an element $g$ of $\Gamma$ to $g\Delta$ and 0 to 0.

  If $\Delta\ne\{ e \},$ then $\pi_H$ is not a semiring
  homomorphism. Indeed, we can choose elements $x,y\in \mathcal
  T(U)$ with $Hx=Hy,$ but $ex<ey.$ Then $x+y=y;$ hence
  $\pi_H(x+y)=Hy,$ while $\pi_H(x)+\pi_H(y)=eHy=\Delta(ey).$
  Notice also that the transmission $\pi_H$ is not initial, since
  $E(H)$ is different from the relation $E(U,\gm _H)$ described
  in Example \ref{examp4.8}.\end{examp}

  We return to transmissive equivalence relations in general.

  \begin{defn}\label{defn4.12} We call a transmissive equivalence relation $E$ on a
  supertropical semiring $U$ \bfem{initial} (resp. \bfem{pushout})
  if the transmission $\pi_E: U\to U/E$ is initial (resp. pushout)
  (cf. Definitions \ref{defn1.2} and \ref{defn1.3}).
  \end{defn}

  We now bring a condition which guarantees that a given
  transmissive equivalence relation $E$ is pushout. The proof will follow
  essentially the same arguments as used in Theorem  \ref{thm1.13}  in
  the case considered there and reconsidered in Example
  \ref{examp4.8}.

  \begin{thm}\label{thm4.13}
  Assume that $E$ is a transmissive equivalence relation on a supertropical semiring
  $U$ with the following additional property:

\medskip
  If $x\in\mathcal
  T(U),$ $y\in U$, and $x\sim_Ey,$ then either $x=y$ or $x\sim_E0$
  (and hence $y\sim_E0).$
\medskip

\noindent  Then $E$ is pushout.

\end{thm}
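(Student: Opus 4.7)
The plan is to directly verify the pushout universal property from Definition \ref{defn1.2}, along the same lines as in the proof of Theorem \ref{thm1.13} and the remark at the end of Example \ref{examp4.8}. Let $\dl: e(U/E) \to eW$ be a semiring homomorphism and $\bt: U \to W$ a transmission with $\bt^\nu = \dl \circ (\pi_E)^\nu$. The first step is to show that $\bt$ respects $E$, i.e., $x_1 \sim_E x_2$ implies $\bt(x_1) = \bt(x_2)$. Once this is settled, surjectivity of $\pi_E$ gives a unique set-map $\eta: U/E \to W$ with $\bt = \eta \circ \pi_E$; \cite[Proposition 6.1.ii]{IKR1} then guarantees that $\eta$ is automatically a transmission (since $\pi_E$ and $\bt$ are transmissions and $\pi_E$ is surjective). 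Comparing ghost parts via $\eta^\nu \circ (\pi_E)^\nu = \bt^\nu = \dl \circ (\pi_E)^\nu$ and using that $(\pi_E)^\nu: eU \to e(U/E)$ is surjective yields $\eta^\nu = \dl$, and uniqueness of $\eta$ is forced by surjectivity of $\pi_E$.

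The real content is therefore the compatibility check, and this is precisely where the extra hypothesis of the theorem does the work. I would argue by case analysis on $x_1, x_2 \in U$ with $x_1 \sim_E x_2$ and $x_1 \neq x_2$. If both $x_1, x_2 \in eU$, then $\bt(x_i) = \bt^\nu(x_i) = \dl([x_i]_E)$, and these agree because $[x_1]_E = [x_2]_E$. Otherwise at least one of $x_1, x_2$ is tangible; by the standing hypothesis combined with $x_1 \neq x_2$, this forces $x_1 \sim_E 0$ and $x_2 \sim_E 0$. Hence $ex_i \sim_E 0$, so
\[
e_W \bt(x_i) \; = \; \bt^\nu(ex_i) \; = \; \dl([ex_i]_E) \; = \; \dl([0]_E) \; = \; 0,
\]
and the axiom $e_W y = 0 \Rightarrow y = 0$ in the supertropical semiring $W$ gives $\bt(x_i) = 0$ for $i = 1, 2$. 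In either case $\bt(x_1) = \bt(x_2)$, as required.

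The main (and only) obstacle is exactly this compatibility verification: without the additional hypothesis one could have a tangible $x \in \mathcal{T}(U)$ equivalent under $E$ to some $y \neq x$ not equivalent to $0$, and then there would be no way to prescribe a single common value for $\bt(x)$ and $\bt(y)$ using only the ghost data $\dl$. The hypothesis sidesteps this by declaring that whenever a tangible is $E$-identified with another element, the whole class must collapse to $0$, on which $\bt$ is necessarily zero. With that case eliminated, the rest of the argument is formal and mirrors Theorem \ref{thm1.13}.
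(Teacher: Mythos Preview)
Your proof is correct and follows essentially the same approach as the paper: both verify the pushout property by checking that $\beta$ is constant on $E$-classes via the same two-case split (both elements ghost versus at least one tangible), then invoke \cite[Proposition 6.1.ii]{IKR1} and surjectivity of $\pi_E$ to obtain the transmission $\eta$ covering $\delta$. The arguments are virtually identical in structure and detail.
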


\begin{proof} Let $M:=eU,$ and let $\gm _E: M\to M/E$ denote
the ghost component of the transmission $\pi_E:U\to U/E.$

In order to verify the pushout property of $\pi_E,$ assume that
$\delta: M/E\to N$ is a homomorphism from $M/E$ to a bipotent
semiring $N$ and $\bt : U\to V$ is a transmission covering
$\delta\circ\gm _E.$ \{In particular, $eV=N\}.$

We look for a transmission  $\eta: U/E\to V$ covering $\delta$
with $\eta\circ\pi_E=\bt .$
$$
\begin{xy}
\xymatrix{   U \ar@/^/@<+1ex>[rr]^\bt  \ar@{>}[r]_{\pi_E}  & U/E\ar@{.>}[r]_\eta & V  \\
 M \ar[u] \ar@{>}[r]_{\gm _E} & M/E
  \ar@{>}[r]_\delta \ar[u]
   &
 N \ar
 [u]
}
\end{xy}
$$
We are forced to define the map $\eta$ by the formula
$$\eta([x]_E)=\bt (x)\qquad (x\in U).$$
In order to prove that $\eta$ is a well-defined map, we have to
verify for $x,y\in U$ with $x\sim_Ey$ that $\bt (x)=\bt (y).$

 \textit{Case} 1. $x\in M,$ $y\in M.$ Now
 $$\bt (x)=\delta\gm _E(x)=\delta([x]_E)$$
 and  $\bt (y)=\delta([y]_E).$ Since $x\sim_Ey,$ we conclude
 that $\bt (x)=\bt (y).$ \pSkip

  \textit{Case} 2. $x\in\mathcal T(U).$ If $x=y,$ then, of course,
  $\bt (x)=\bt (y).$ Otherwise $x\sim_E0,$ $y\sim_E0$ by the
  hypothesis of the theorem; hence $ex\sim_E0,$ $ey\sim_E0.$ By
  the settled first case, we conclude that
  $e\bt (x)=\bt (ex)=0,$ which implies $\bt (x)=0.$ In the same
  way, $\bt (y)=0.$ Thus $\bt (x)=\bt (y)$ again.

  The case that $y\in\mathcal T(U)$ is now settled, too. Thus,
  $\eta$ is indeed a well-defined map. We have $\eta\pi_E=\bt .$

  Since both $\bt $ and $\pi_E$ are transmissions, and $\pi_E$ is
  surjective, we know by \cite[Proposition~6.1.ii]{IKR1} that $\eta$ is a
  transmission. By assumption $\bt (x)=\delta([x]_E)$ for every
  $x\in M.$ But also $\bt (x)=\eta([x]_E).$ Thus $\eta$ covers
  $\delta.$ The pushout property of $\pi_E$ is verified.
\end{proof}

\section{The equivalence relations $E(\mfa )$}\label{sec:5}

We study a class of transmissive equivalence relations which turns
out to be particularly well accessible.

If $R$ is a ring and $\mfa $ is an ideal of $R$ we have the
well-known equivalence relation  $``\!\!\!\mod \mfa $'' at our
disposal. We write down the obvious analogue of this relation for
semirings.

\begin{defn}\label{defn5.1} Let $R$ be a semiring and $\mfa$ an ideal of $R.$
We define an equivalence relation $E(\mfa)$ on $R$ as follows,
writing $\sim_{\mfa }$ instead of $\sim_{E(\mfa )}.$
$$x\sim_{\mfa }y\quad\Leftrightarrow \quad\exists a,b\in\mfa: x+a=y+b.$$\end{defn}

For $x\in R$ we denote the equivalence class $[x]_{E(\mfa)}$ more
briefly by $[x]_{\mfa },$ and denote the map $x\mapsto [x]_{\mfa
}$ from $R$ to the set $R/E(\mfa )$ usually by $\pi_{\mfa }$
instead of $\pi_{E(\mfa )}.$

If $x,y,z\in R$ and $x\sim_{\mfa } y,$ then clearly $x+z\sim_{\mfa
}y+z$ and $xz\sim_{\mfa }yz.$ Thus, we have a well-defined
addition and multiplication on the set $R/E(\mfa ),$ given by the
rules $(x,y\in R)$ \begin{align*}
[x]_{\mfa }+[y]_{\mfa }&:=[x+y]_{\mfa },\\
[x]_{\mfa }\cdot [y]_{\mfa }&:=[xy]_{\mfa }.\end{align*} With
these compositions $R/E(\mfa )$ is a semiring and $\pi_{\mfa}$ is
a homomorphism from $R$ onto $R/E(\mfa)$, cf.  \cite{qtheory}.

\begin{thm}\label{thm5.2}
If $R$ is supertropical, then for any ideal $\mfa $ of $R$ the
relation $E(\mfa )$ is transmissive.
\end{thm}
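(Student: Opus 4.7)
The plan is to reduce the theorem to showing that $S := R/E(\mfa)$ is a supertropical semiring, since the discussion preceding the theorem already endows $S$ with a semiring structure for which $\pia$ is a semiring homomorphism, and every semiring homomorphism into a supertropical semiring is a transmission. Throughout, I write $\bar x := [x]_\mfa$ and $\bar e := \pia(e_R)$. Note that Theorem~\ref{thm4.7} cannot be invoked directly, since the ghost ideal of $S$ need not be ghost-cancellative.

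The ghost ideal $\bar e S$ is bipotent: $\bar e$ is idempotent, and $\bar e S = \pia(eR)$ is the homomorphic image under $\pia|_{eR}$ of the bipotent semiring $eR$, and so is bipotent under its induced total ordering. The technical crux is the axiom $\bar e \bar x = 0 \Rightarrow \bar x = 0$. Given a witness $ex + a = b$ with $a, b \in \mfa$, the supertropical identity $x + ex = ex$ (immediate from $e(ex) = ex$ and the addition rule of the Introduction) permits me to add $x$ to both sides and deduce $x + b = x + ex + a = ex + a = b$, so $x \sima 0$.

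It remains to verify the addition rule of the Introduction on $S$: $\bar x + \bar y$ equals $\bar x$, $\bar y$, or $\bar e \bar x$ according to whether $e\bar x$ is greater than, less than, or equal to $e\bar y$ in $\bar e S$. The strict inequalities are easy: $e\bar x < e\bar y$ forces $ex < ey$ in $R$ for any choice of representatives (the alternatives $ex \geq ey$ would identify $\pia(ex)$ with $\pia(ey)$), hence $\bar x + \bar y = [x + y]_\mfa = [y]_\mfa = \bar y$, and symmetrically for $e\bar x > e\bar y$. For the equality $e\bar x = e\bar y$, I select a witness $ex + a = ey + b$ with $a, b \in \mfa$ and perform a case analysis on the two supertropical collapses $ex + a \in \{ex, a\}$ and $ey + b \in \{ey, b\}$. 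The subcase $ex = ey$ in $R$ yields $x + y = ex$ immediately; each of the three remaining subcases exhibits either one of $ex, ey$ as an element of $\mfa$, or an equation of the form $ex + a = a$, both of which give $ex \sima 0$. The previous step then forces $\bar x = \bar y = 0$, whence $\bar x + \bar y = 0 = \bar e \bar x$.

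The principal obstacle is precisely this four-way case analysis in the equality case, where the closure of $\mfa$ under addition and under $R$-multiplication, the supertropical arithmetic of $R$, and the implication $\bar e \bar x = 0 \Rightarrow \bar x = 0$ established above must be dovetailed to close each subcase either by the trivial identity $ex = ey$ in $R$ or by collapsing the entire situation to $\bar x = \bar y = 0$.
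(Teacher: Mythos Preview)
Your proposal is correct and proceeds along a genuinely different axis from the paper's proof. Both arguments reduce to showing that $S=R/E(\mfa)$ is supertropical, but the paper verifies the defining axioms $(3.3')$, $(3.3'')$, $(3.3)$ of \cite[\S3]{IKR1} directly, whereas you verify the equivalent structural description from the Introduction (bipotent ghost ideal, the implication $e\bar x=0\Rightarrow\bar x=0$, and the three-clause addition rule). The paper's key step is $(3.3'')$: given $x+x+a=y+y+b$ with $a,b\in\mfa$, it adds the single ghost element $c:=e(a+b)\in\mfa$ to both sides, uses $c+c=c$ to rewrite this as $(x+c)+(x+c)=(y+c)+(y+c)$, and then invokes $(3.3'')$ in $R$ to obtain $x+x\sim_\mfa x+y$. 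This one-line absorption trick replaces your four-way case split on the collapses $ex+a\in\{ex,a\}$ and $ey+b\in\{ey,b\}$. Your route is more elementary and self-contained (it never names the axioms of \cite{IKR1}), and it isolates the implication $e\bar x=0\Rightarrow\bar x=0$ as a reusable lemma that cleanly disposes of the three degenerate subcases; the paper's route is shorter and avoids case analysis entirely. A minor wording issue: in the strict-inequality clause you say ``the alternatives $ex\ge ey$ would identify $\pia(ex)$ with $\pia(ey)$''---strictly, $ex\ge ey$ only yields $\pia(ex)\ge\pia(ey)$, but this already contradicts $e\bar x<e\bar y$, so your conclusion $ex<ey$ stands.
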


\begin{proof} Any homomorphism between supertropical semirings
clearly obeys the axioms TM1--TM5 from \cite[\S5]{IKR1}, hence is
a transmissive map. Thus our task is only to prove that the
semiring $U/E(\mfa)$ is supertropical.

We verify directly the axioms (3.3$'$), (3.3$"$), (3.3) from
\cite[\S3]{IKR1} for the semiring $U/E(\mfa),$ i.e.,
$$
\begin{array}{rl}
(3.3' )_{\mfa}: & \  1+1+1+1\sim_{\mfa}1+1,\\[1mm]
(3.3'' )_{\mfa}: & \  x+x \sim_{\mfa}y+y\Rightarrow
x+x\sim_{\mfa}x+y, \\[1mm]
(3.3)_{\mfa}: &  \  \pi_{\mfa}(x)\ne\pi_{\mfa}(y)\Rightarrow
\pi_{\mfa}(x)+\pi_{\mfa}(y)\in\{\pi_{\mfa}(x),\pi_{\mfa}(y)\}. \\
\end{array}$$

\noindent Clearly (3.3$')_{\mfa}$ holds since (3.3$'$) of
\cite{IKR1} holds for $R,$ and $(3.3)_{\mfa}$ holds since (3.3) of
\cite{IKR1} holds for $R$ and
$\pi_{\mfa}(x)+\pi_{\mfa}(y)=\pi_{\mfa}(x+y).$

We turn to $(3.3)_{\mfa}$. We are given $a,b\in \mfa$ with
$x+x+a=y+y+b.$ We add $c:=e(a+b)$ to both sides and obtain
$x+x+c=y+y+c.$ Since $c+c=c$ it follows that
$$(x+c)+(x+c)=(y+c)+(y+c).$$
Now (3.3$''$) for $R$ gives us
$$(x+c)+(x+c)=(x+c)+(y+c).$$
Thus $x+x\sim_{\mfa}x+y,$ as desired.
\end{proof}

Let again $R$ be any semiring. In contrast to the case of rings,
 different
ideals $\mfa ,\mfb $ of $R$ may give the same relation $E(\mfa
)=E(\mfb ),$ but this ambiguity can be tamed.

Clearly $\mfa _1:=[0]_{\mfa }$ is again an ideal of the semiring
$R$. It consists of all $x\in R$ with $x+a\in\mfa $ for some
$a\in\mfa .$ We call $\mfa _1$ the \bfem{saturum} of $\mfa,$ and
we write $\mfa _1=\sat\mfa .$ We call $\mfa $ \bfem{saturated} (in
$U$), if $\mfa =\mfa _1.$

\begin{prop}\label{prop5.3}
Let $R$ be any semiring and $\mfa ,\mfb $ ideals of $R.$
\begin{enumerate}\item[i)] $E(\mfa)=E(\sat\mfa )$; \item[ii)] $E(\mfa )\subset E(\mfb
)$ iff $\sat\mfa \subset \sat\mfb;$
\item[iii)] $\sat\mfa $ is the unique biggest ideal
$\mfa'$ of $R$ with $E(\mfa ')=E(\mfa ).$
\end{enumerate}
\end{prop}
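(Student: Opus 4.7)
The plan is to prove (i) first; claims (ii) and (iii) will follow by short formal arguments, and a separate observation that $E$ is monotonic in the ideal argument (i.e.\ $\mfa \subset \mfb$ implies $E(\mfa) \subset E(\mfb)$, which is clear from the definition).

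For (i), the inclusion $E(\mfa) \subset E(\sat \mfa)$ is immediate from the monotonicity just noted, since $\mfa \subset \sat \mfa$. The substantive step is the reverse inclusion. Suppose $x \sim_{\sat \mfa} y$, so $x + a' = y + b'$ with $a', b' \in \sat \mfa$. By definition of the saturum, there exist $c, d \in \mfa$ with $a' + c \in \mfa$ and $b' + d \in \mfa$. Adding $c + d$ to both sides yields
\[
x + (a' + c + d) = y + (b' + c + d),
\]
and since $\mfa$ is closed under addition, both $\tilde a := a' + c + d = (a' + c) + d$ and $\tilde b := b' + c + d = (b' + d) + c$ lie in $\mfa$. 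Hence $x \sim_{\mfa} y$. This is the only place where a genuine argument is needed; everything else is formal. I expect the main (modest) obstacle to be getting the witnesses for saturation right, which is handled by the trick of adding $c+d$ simultaneously on both sides.

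For (ii), $\sat \mfa \subset \sat \mfb$ gives $E(\sat \mfa) \subset E(\sat \mfb)$ by monotonicity, and then (i) yields $E(\mfa) \subset E(\mfb)$. Conversely, if $E(\mfa) \subset E(\mfb)$ and $x \in \sat \mfa = [0]_{\mfa}$, then $x \sim_{\mfa} 0$, so $x \sim_{\mfb} 0$, i.e.\ $x \in [0]_{\mfb} = \sat \mfb$.

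For (iii), note that $\sat \mfa$ itself is an ideal (as stated just before the proposition) and satisfies $E(\sat \mfa) = E(\mfa)$ by (i). Now suppose $\mfa'$ is any ideal of $R$ with $E(\mfa') = E(\mfa)$. By (ii), $\sat \mfa' = \sat \mfa$. But trivially $\mfa' \subset \sat \mfa'$ (any $x \in \mfa'$ satisfies $x + 0 \in \mfa'$ with $0 \in \mfa'$), so $\mfa' \subset \sat \mfa$. This shows $\sat \mfa$ is the unique biggest such ideal.
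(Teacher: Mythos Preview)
Your proof is correct and follows essentially the same approach as the paper: the key step for (i) is the same trick of adding the saturation witnesses to both sides of $x+a'=y+b'$ to produce witnesses in $\mfa$, and (ii) and (iii) are deduced from (i) together with monotonicity and the identification $\sat\mfa=[0]_\mfa$ exactly as in the paper. The only cosmetic difference is that you add $c+d$ symmetrically while the paper adds $b$ on one side and $a$ on the other, which yields the same equation.
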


\begin{proof}
a) If $\mfa \subset\mfb $ then $E(\mfa )\subset E(\mfb ).$
Conversely, if $E(\mfa )\subset E(\mfb ),$ then $[0]_{\mfa
}\subset [0]_{\mfb },$ i.e., $\sat\mfa \subset \sat\mfb .$ \pSkip

b) Let $\mfa _1:=\sat\mfa .$ If $x\sim_{\mfa_1}y$, then there
exist $z,w\in\mfa _1$ with $x+z=y+w,$ and there exist $a,b\in\mfa
$ with $z+a\in\mfa ,$ $w+b\in\mfa .$ It follows that
$$x+(z+a)+b=y+(w+b)+a,$$
which tells us that $x\sim_{\mfa }y.$ Thus $E(\mfa_1)=E(\mfa ).$
\pSkip

c) If $\sat\mfa \subset \sat\mfb$, then
$$E(\mfa )=E(\sat\mfa )\subset E(\sat\mfb)=E(\mfb ).$$ Now the claims i) and ii) are evident. \pSkip

d) If $E(\mfa ')=E(\mfa ),$ then it follows from ii) that
$\sat\mfa '=\sat\mfa ,$ and hence $ \mfa '\subset \sat\mfa .$
\end{proof}

Assume  now that $U$ is a supertropical semiring with ghost ideal
$M:=eU.$ Then we can give a very precise description of the
relation $E(\mfa )$ for any ideal $\mfa $ of $U.$

\begin{thm}\label{thm5.4} Let $\mfa $ be an ideal of $U.$
The  equivalence classes of the relation $E(\mfa )$ are the set
$[0]_{\mfa }=\sat\mfa $ and the one-point sets $\{x\}$ with $x\in
U\setminus \sat\mfa .$ More precisely the following holds:
\begin{enumerate}\item[i)] If $ex>e\mfa $ (i.e., $ex>ea\
\forall a\in\mfa ),$ then $[x]_{\mfa }=\{x\}.$
\item[ii)] If $ex\le ea$ for some $a\in\mfa ,$ then
$x\sim_{\mfa }0.$
\end{enumerate}
\end{thm}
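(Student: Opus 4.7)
The plan is to prove (ii) first, then (i), and finally to observe that the two conditions cover all of $U$ in a mutually exclusive way, so that the partition described in the theorem follows.

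For (ii), suppose $ex \leq ea$ for some $a \in \mfa$. I would consider $x + ea$ and use the supertropical addition rule from the Introduction. Since $e(ea) = e^2 a = ea$, the ghost companion of $ea$ is $ea$ itself, and we are in the case $e x \leq e(ea)$. The addition rule then yields $x + ea = ea$, whether $ex < ea$ or $ex = ea$. Since $ea \in \mfa$, this gives $x + ea = 0 + ea$, witnessing $x \sim_\mfa 0$; hence $x \in [0]_\mfa = \sat\mfa$.

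For (i), suppose $ex > ea$ for all $a \in \mfa$ and that $x \sim_\mfa y$, say $x + a = y + b$ with $a, b \in \mfa$. The condition $ex > ea$ gives $x + a = x$ by the addition rule, so $x = y + b$. I then run through the three cases for $y + b$:
\begin{enumerate}
\item[(1)] If $ey > eb$, then $y + b = y$, so $x = y$.
\item[(2)] If $ey < eb$, then $y + b = b$, so $x = b \in \mfa$, giving $ex = eb \in e\mfa$, contradicting $ex > e\mfa$.
\item[(3)] If $ey = eb$, then $y + b = ey$, so $x = ey$, and hence $ex = e(ey) = ey = eb \in e\mfa$, again contradicting $ex > e\mfa$.
\end{enumerate}
Only case~(1) is consistent with the hypothesis, so $y = x$; thus $[x]_\mfa = \{x\}$.

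Finally, every element $x \in U$ falls into exactly one of the two cases (i) and (ii): either $ex > ea$ for all $a \in \mfa$, or there exists $a \in \mfa$ with $ex \leq ea$. Combining this with (i) and (ii) shows that the equivalence classes of $E(\mfa)$ are precisely $\sat\mfa = [0]_\mfa$ together with the singletons $\{x\}$ for $x \notin \sat\mfa$. The only mild obstacle is being careful in case~(3) of (i), where one must verify that $x = ey$ forces $ex = ey$ so as to produce the contradiction with the strict inequality $ex > e\mfa$; once this is noted the rest is a direct application of the supertropical addition rule.
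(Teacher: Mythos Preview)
Your proof is correct and follows essentially the same approach as the paper: both arguments reduce everything to the supertropical addition rule. The only cosmetic differences are that for (ii) you work with $ea$ instead of $a$ (neatly merging the paper's two subcases $ex<ea$ and $ex=ea$ into one), while for (i) the paper avoids your three-way case split by first applying $e$ to $x=y+b$ to get $ex=\max(ey,eb)$ and then using $ex>eb$ to force $ex=ey$, hence $y+b=y$.
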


\begin{proof}
i) Assume that $ex>e\mfa $ and $x\sim_{\mfa }y.$ There exist
elements $a,b$ in $\mfa $ with $x+a=y+b.$ Now $ex>ea,$ and hence
$x+a=x.$ From $ex=ey+ eb$ we conclude that $ex=\max(ey,eb).$ But
$ex>eb.$ Thus $ex=ey,$ and $y+b=y.$ We have $x=y.$ \pSkip

ii) If $ex<ea$ for some $a\in\mfa ,$ then $x+a=a,$ and hence
$x\sim_{\mfa }0.$ If $ex=ea$ for some $a\in\mfa ,$ then $x+a=ea,$
and hence again $x\sim_{\mfa }0.$\end{proof}

The set $e\mfa $ is an ideal of both $U$ and $M$; hence, it gives
us a relation $E_U(e\mfa )$ on $U$ and a relation $E_M(e\mfa)$ on
$M.$ It further gives us ideals $\sat_U(e\mfa)$ and $\sat_M(e\mfa
)$ of $U$ and $M$, respectively.

\begin{cor}\label{cor5.5}
Let $\mfa $ be an ideal of $U.$ \begin{enumerate} \item[i)]
 $\sat_U\mfa $ is the set of all $x\in U$ with $ex\le
c$ for some $c\in e\mfa .$ \item[ii)] $\mfa $ is saturated in $U$
iff $e\mfa $ is a lower set of $M$ and every $x\in U$ with
$ex\in\mfa $ is itself an element of $\mfa .$
\item[iii)] $\sat_U(e\mfa )=\sat_U(\mfa).$ \item[iv)] $\sat_M(e\mfa )=\sat_U(\mfa)
\cap M=e\sat_U(\mfa ).$ \item[v)] $E_U(\mfa)=E_U(e\mfa ).$
\item[vi)] The restriction $E_U(\mfa) | M=E_U(\mfa
)\cap(M\times M)$ of the relation $E_U(\mfa )$ to $M$ coincides
with $E_M(e\mfa ).$\end{enumerate}\end{cor}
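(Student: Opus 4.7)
The plan is to derive all six assertions directly from Theorem \ref{thm5.4} (which describes $E_U(\mfa)$ explicitly via the ghost map) together with Proposition \ref{prop5.3}. Item (i) is essentially a restatement of Theorem \ref{thm5.4}.ii: by that theorem $x \in [0]_{\mfa} = \sat_U\mfa$ precisely when $ex \leq ea$ for some $a \in \mfa$, and ranging $a$ over $\mfa$ amounts to ranging $c = ea$ over $e\mfa$.

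For (ii), I would use (i) to rewrite the saturation condition $\mfa = \sat_U\mfa$ as: $x \in \mfa$ whenever $ex \leq c$ for some $c \in e\mfa$. The $(\Rightarrow)$ direction then splits naturally: for $z \in M$ with $z \leq c \in e\mfa$, the fact that $z = ez \leq c$ gives $z \in \sat_U\mfa = \mfa$, hence $z = ez \in e\mfa$, so $e\mfa$ is a lower set of $M$; and if $ex \in \mfa$, the relation $ex = e(ex) \in e\mfa$ together with $ex \leq ex$ gives $x \in \sat_U\mfa = \mfa$. For $(\Leftarrow)$, I would take $x \in \sat_U\mfa$, use the lower-set property to conclude $ex \in e\mfa \subseteq \mfa$, and then invoke the second hypothesis to conclude $x \in \mfa$. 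For (iii), applying (i) to both $\mfa$ and $e\mfa$ and noting $ec = c$ for $c \in e\mfa$ (since $e$ is idempotent) shows that both saturations yield the same set of $x$'s. For (iv), the first equality follows from (i) and the identity $y = ey$ for $y \in M$: $\sat_U(\mfa) \cap M$ consists of exactly those $y \in M$ with $y \leq c$ for some $c \in e\mfa$, which is $\sat_M(e\mfa)$; for the second equality, $e \sat_U(\mfa) \subseteq \sat_U(\mfa) \cap M$ since $\sat_U(\mfa)$ is an ideal of $U$ (Proposition \ref{prop5.3}), and the reverse inclusion follows because $y = ey$ whenever $y \in M$.

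Item (v) is then immediate from Proposition \ref{prop5.3}.ii combined with (iii): $E_U(\mfa) = E_U(e\mfa)$ iff $\sat_U(\mfa) = \sat_U(e\mfa)$. Finally, for (vi), I would argue directly from the definition: given $x, y \in M$ with $x + a = y + b$ for some $a, b \in \mfa$, applying $e$ and using $ex = x$, $ey = y$ yields $x + ea = y + eb$ with $ea, eb \in e\mfa$; the reverse inclusion is trivial since $e\mfa \subseteq \mfa$. No step is genuinely hard; the most delicate is (ii), where one must be attentive to the distinction between $ex \in \mfa$ and $ex \in e\mfa$ (they coincide because $ex = e(ex)$, but this should be noted explicitly). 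Everything else is pure bookkeeping with the explicit description of $E_U(\mfa)$ furnished by Theorem \ref{thm5.4}.
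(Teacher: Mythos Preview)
Your proposal is correct and follows essentially the same route as the paper, deriving everything from Theorem~\ref{thm5.4} and Proposition~\ref{prop5.3}. The only cosmetic differences are that the paper justifies (v) by observing that the description of $E_U(\mfa)$ in Theorem~\ref{thm5.4} depends only on $e\mfa$, and obtains (vi) by applying Theorem~\ref{thm5.4} to both $U$ and $M$, whereas you route (v) through Proposition~\ref{prop5.3}.ii and argue (vi) directly from Definition~\ref{defn5.1}; both variants are equally short and valid.
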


\begin{proof}
(i) is evident from \thmref{thm5.4}, since $\sat_U(\mfa
)=[0]_{\mfa },$ and (ii), (iii) are evident from~(i). We then
obtain (iv) by applying (i) to both $U$ and $M.$ \{More generally,
$e\mfb =\mfb \cap M$ for any ideal $\mfb $ of $U.\}$ Claim (v) is
clear, because the description of $E_U(\mfa )$ does not change if
we replace $\mfa $ by $e\mfa .$ Finally, we read off (vi) by
applying \thmref{thm5.4} to both $U$ and~$M.$
\end{proof}

\begin{cor}\label{cor5.6} If $\mfa $ and $\mfb $ are
ideals of $U$, then $E_U(\mfa )\subset E_U(\mfb )$ or $E_U(\mfb
)\subset E_U(\mfa ).$
\end{cor}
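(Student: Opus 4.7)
The statement asserts that the equivalence relations $E_U(\mfa)$ arising from ideals $\mfa$ of $U$ form a totally ordered family under inclusion. My plan is to reduce the claim to a statement about the saturated ideals $\sat_U(\mfa)$ and $\sat_U(\mfb)$, and then exploit the total ordering of the bipotent semiring $M = eU$.

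First, by \propref{prop5.3}.ii we have $E_U(\mfa) \subset E_U(\mfb)$ iff $\sat_U(\mfa) \subset \sat_U(\mfb)$. Hence the corollary reduces to showing that $\sat_U(\mfa)$ and $\sat_U(\mfb)$ are comparable under inclusion. For this I would invoke the explicit description given in \corref{cor5.5}.i: an element $x \in U$ lies in $\sat_U(\mfa)$ iff $ex \le c$ for some $c \in e\mfa$, and similarly for $\mfb$.

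Next I would argue by contradiction. Suppose neither inclusion holds; then there exist $x \in \sat_U(\mfa) \setminus \sat_U(\mfb)$ and $y \in \sat_U(\mfb) \setminus \sat_U(\mfa)$. By the description above, pick $a \in e\mfa$ with $ex \le a$, and $b' \in e\mfb$ with $ey \le b'$. Since $x \notin \sat_U(\mfb)$, every $b \in e\mfb$ satisfies $ex > b$; in particular $ex > b'$. Combining these inequalities in the totally ordered monoid $M$ yields
$$ ey \le b' < ex \le a,$$
so $ey \le a \in e\mfa$, forcing $y \in \sat_U(\mfa)$ — a contradiction. Hence $\sat_U(\mfa) \subset \sat_U(\mfb)$ or $\sat_U(\mfb) \subset \sat_U(\mfa)$, and the corollary follows.

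\textbf{Main obstacle.} There is no real obstacle here; the heart of the argument is that, in a totally ordered set $M$, any two downward-closed subsets are comparable under inclusion, and \corref{cor5.5}.i shows that $\sat_U(\mfa)$ is (the $\nu_U$-preimage of) exactly the downward closure of $e\mfa$ in $M$. The only point requiring a touch of care is bookkeeping between $U$ and its ghost ideal $M$, but since $x \in \sat_U(\mfa)$ depends on $x$ only through $ex$, the reduction to $M$ is immediate.
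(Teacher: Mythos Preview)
Your proof is correct and follows essentially the same route as the paper: reduce to saturated ideals, use \corref{cor5.5}.i to see that these are preimages under $\nu_U$ of lower sets of the totally ordered set $M$, and observe that any two lower sets of a totally ordered set are comparable. The paper states this last step directly (``$e\mfa$ and $e\mfb$ are lower sets of $M$, thus $e\mfa\subset e\mfb$ or $e\mfb\subset e\mfa$'') while you unwind it as a short contradiction argument, but the content is the same.
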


\begin{proof} We may assume from the start that $\mfa $ and
$\mfb $ are saturated. Now $e\mfa $ and $e\mfb $ are lower sets of
$M.$ Thus, $e\mfa \subset e\mfb $ or $e\mfb \subset e\mfa .$ This
implies that $\mfa\subset \mfb $ or $\mfb \subset \mfa $ (cf.
Corollary~\ref{cor5.5}.i), hence $E(\mfa )\subset E(\mfb )$ or
$E(\mfb )\subset E(\mfa).$\end{proof}

\begin{examp}\label{examp5.7}
The unique maximal saturated proper ideal of $U$ is $$\mfa :
=\{x\in U \ds |ex<e\}.$$ It is easily seen to be a prime ideal
(provided $U$ is not the null ring), but perhaps $\mfa $ is not a
maximal ideal of $U.$ Take for example $U=M=\mathbb N_0=\mathbb
N\cup\{0\},$ where $\mathbb N$ is the ordered monoid
$\{1,2,3,\dots\}$ with standard multiplication and standard
ordering. Now $\mfa =\{0\},$ but $M\setminus\{1\}$ is the only
maximal ideal of $M.$\end{examp}

{}From Corollary \ref{cor5.5} we can read off further facts about
saturated ideals, which will be needed later on.

\begin{schol}\label{schol5.8} As before, $U$ is a supertropical
semiring, and $M:=eU.$
\begin{enumerate}\item[a)] An ideal of $U$ is saturated, iff
$e\mfa (=\mfa \cap M)$ is saturated in $M,$ and moreover every
$x\in U$ with $ex\in\mfa $ is an element of $\mfa .$
\item[b)] If $\mfc $ is a saturated ideal of $M,$ then
$\mfa :=\{x\in U \ds |ex\in\mfc \}$ is a saturated ideal of $U$
and $e\mfa =\mfc .$ \item[c)] The saturated ideals $\mfa $ of $U$
correspond uniquely with the ideals $\mfc $ of $M$ which are lower
sets via
$$\mfc =e\mfa \ (=\mfa \cap M),\qquad \mathfrak
a=\{x\in U \ds |ex\in\mfc \}.$$
\end{enumerate}\end{schol}

\begin{proof}
a) Clear from Corollary \ref{cor5.5}.ii,iv. \pSkip

b) We have $\mfc \subset \mfa ,$ and hence $\mfc =e\mfa .$ Now use
a). \pSkip

c) Now evident, taking into account Corollary
\ref{cor5.5}.ii.\end{proof}

The saturated ideals of $U$ form a chain (Corollary \ref{cor5.6}).
We ask: which of these ideals are prime ideals? In particular,
given a saturated ideal $\mfa \ne U,$ does there exist a saturated
prime ideal $\mfp \supset \mfa ?$ If ``Yes", which is the smallest
one?

These questions can be pushed to the ghost level by the following
simple observation.

\begin{lem}\label{lem5.9} Assume that $\mfa $ is an ideal
of $U$ with $e\notin \mfa .$ Then $\mfa $ is a prime ideal of $U,$
iff $e\mfa (=\mfa \cap M)$ is a prime ideal of $M$ and every $x\in
U$ with $ex\in e\mfa $ is an element of $\mfa .$\end{lem}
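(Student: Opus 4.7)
The plan is to prove both directions directly, using the idempotency of $e$ to push the primeness condition down to the ghost level and back.

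For the forward direction ($\Rightarrow$), assume $\mfa$ is prime in $U$. I would first check that $e\mfa = \mfa\cap M$ is a proper ideal of $M$: if $e\mfa = M$, then $e\in\mfa$, contradicting the hypothesis. To show primality of $e\mfa$ in $M$, I would take $x,y\in M$ with $xy\in e\mfa\subset\mfa$; primality of $\mfa$ gives $x\in\mfa$ or $y\in\mfa$, and since $x,y\in M$, this lands in $e\mfa$. For the second condition, given $x\in U$ with $ex\in e\mfa\subset\mfa$, factor $ex = e\cdot x$ and apply primality to conclude $e\in\mfa$ (excluded by hypothesis) or $x\in\mfa$, yielding $x\in\mfa$ as desired.

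For the backward direction ($\Leftarrow$), assume the two ghost-level conditions. The properness of $\mfa$ is again immediate from $e\notin\mfa$. Suppose $xy\in\mfa$ for $x,y\in U$. The key step is the identity
\[
(ex)(ey) \; = \; e^2 xy \; = \; e(xy),
\]
which uses only $e^2=e$. Since $xy\in\mfa$ and $e(xy)\in M$, we have $e(xy)\in \mfa\cap M = e\mfa$. Thus $(ex)(ey)\in e\mfa$ with $ex,ey\in M$. By primality of $e\mfa$ in $M$, either $ex\in e\mfa$ or $ey\in e\mfa$. Applying the second hypothesis then gives $x\in\mfa$ or $y\in\mfa$, proving $\mfa$ is prime.

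The proof is essentially a two-line calculation in each direction; there is no real obstacle, only the need to be careful that $e\mfa$ is both used as a subset of $\mfa$ and as $\mfa\cap M$ (which are equal by definition). The slight subtlety lies in the backward direction, where the assumption ``every $x\in U$ with $ex\in e\mfa$ belongs to $\mfa$'' is exactly the extra bridge needed to lift ghost-level primality back to $U$; without it, the ghost primality alone would not suffice because tangible factors $x,y$ of a ghost element $xy\in\mfa$ are not automatically in $\mfa$.
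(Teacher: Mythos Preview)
Your proof is correct and follows essentially the same approach as the paper's own proof: both directions use primality together with the factorization $e\cdot x$ (forward) and the identity $(ex)(ey)=e(xy)$ (backward), with the hypothesis $e\notin\mfa$ ruling out the unwanted alternative. Your version is slightly more explicit in checking that $e\mfa$ is a proper ideal of $M$, which the paper leaves implicit.
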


\begin{proof} a) If $\mfa $ is prime in $U,$ then
$e\mfa =\mfa \cap M$ is prime in $M.$ Moreover, if $x\in U$ and
$ex\in e\mfa ,$ then $ex\in\mfa .$ Since $e\notin\mfa ,$ it
follows that $x\in\mfa .$ \pSkip

b) Assume that $e\mfa $ is prime in $M$ and $x\in\mfa $ for every
$x\in U$ with $ex\in e\mfa .$ Let $y,z\in U$ be given with
$yz\in\mfa .$ Then $(ey)(ez)\in e\mfa ;$ hence, $ey\in \mfa $ or
$ez\in\mfa ,$ implying $y\in\mfa $ or $z\in\mfa .$ Thus $\mfa $ is
prime.\end{proof}

\textit{N.B.} The condition $e\notin \mfa $ is important here. For
example, if $\mathcal T(U)$ is closed under multiplication, then
$\mfa :=eU$ is prime in $U$, but $\mfa \cap M=M$ is not prime in
$M.$

\begin{prop}\label{prop5.10}
\quad{}

\begin{enumerate} \item[i)] The prime ideals $\mfa $ of $U$
with $e\notin\mfa $ correspond uniquely with the prime ideals
$\mfc $ of $M$ via $\mfc =e\mfa (=\mfa \cap M)$ and
$$\mfa =\{x\in U \ds |ex\in\mfc \}.$$

\item[ii)] $\mfa $ is a saturated prime ideal of $U$ iff
$e\mfa $ is a saturated prime ideal of $M.$ \end{enumerate}
\end{prop}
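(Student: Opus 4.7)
The plan is to deduce both statements from Lemma \ref{lem5.9}, Scholium \ref{schol5.8}, and Example \ref{examp5.7}, with very little new work.

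For part (i) I would handle the two directions of the correspondence separately. Given a prime ideal $\mfa$ of $U$ with $e\notin\mfa$, Lemma \ref{lem5.9} immediately delivers that $\mfc:=e\mfa=\mfa\cap M$ is prime in $M$ and that $\mfa=\{x\in U\mid ex\in\mfc\}$: indeed the inclusion ``$\subset$'' is immediate from $x\in\mfa\Rightarrow ex\in\mfa\cap M=\mfc$, and ``$\supset$'' is precisely the second condition of Lemma \ref{lem5.9}. For the reverse direction, start with a prime ideal $\mfc$ of $M$ and set $\mfa:=\{x\in U\mid ex\in\mfc\}$. The verifications to make are: $\mfa$ is an ideal of $U$ (closure under addition uses bipotence of $M$, namely $e(x+y)=ex+ey=\max(ex,ey)\in\mfc$; closure under scaling uses the factorization $e(rx)=(er)(ex)\in\mfc$, valid since $er\in M$ and $\mfc$ is an ideal of $M$); $e\notin\mfa$ (because $e\cdot e=e=1_M\notin\mfc$); and $e\mfa=\mfc$ (the inclusion $e\mfa\subset\mfc$ is built in, while $c\in\mfc$ implies $ec=c\in\mfc$, hence $c\in\mfa$ and $c=ec\in e\mfa$). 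Lemma \ref{lem5.9} then certifies that $\mfa$ is prime. The two constructions $\mfa\mapsto e\mfa$ and $\mfc\mapsto\{x\mid ex\in\mfc\}$ are visibly mutual inverses.

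For part (ii) I would first observe that a saturated prime ideal $\mfa$ of $U$ is proper, hence by Example \ref{examp5.7} it is contained in the maximal saturated proper ideal $\{x\in U\mid ex<e\}$; in particular $e\notin\mfa$, so part (i) applies. Scholium \ref{schol5.8}(a) then gives that $e\mfa$ is saturated in $M$, and by (i) it is prime; so $e\mfa$ is a saturated prime ideal of $M$. Conversely, given a saturated prime $\mfc$ of $M$, the ideal $\mfa:=\{x\in U\mid ex\in\mfc\}$ from (i) is prime with $e\mfa=\mfc$. Scholium \ref{schol5.8}(a) characterizes saturation of $\mfa$ by two conditions: that $e\mfa$ be saturated in $M$ (which is the hypothesis on $\mfc$) and that every $x\in U$ with $ex\in\mfa$ belongs to $\mfa$ (which holds by construction, since $ex\in\mfa\cap M=\mfc$ forces $x\in\mfa$). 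Thus $\mfa$ is saturated, completing the equivalence.

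The only mild subtlety is the closure under addition in the backward direction of (i): one has to remember that in the bipotent semiring $M$ the sum $ex+ey$ is just $\max(ex,ey)$, so membership of $ex,ey$ in the ideal $\mfc$ immediately yields membership of their maximum. Beyond this, everything is a direct application of the preceding results, and the proposition amounts essentially to repackaging Lemma \ref{lem5.9} and Scholium \ref{schol5.8}.
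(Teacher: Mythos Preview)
Your proof is correct and follows essentially the same route as the paper: part (i) is unpacked from Lemma \ref{lem5.9}, and part (ii) from Scholium \ref{schol5.8}.a together with part (i). The only minor deviation is that you invoke Example \ref{examp5.7} to get $e\notin\mfa$ for a saturated proper ideal, whereas the paper argues this directly from $1+e=e$ (if $e\in\mfa$ then $1\sim_\mfa 0$, forcing $1\in\sat\mfa=\mfa$); both are fine.
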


\begin{proof} i) is clear by \lemref{lem5.9}. Now ii) follows by
Scholium \ref{schol5.8}.a. (Notice that if $\mfa $ is a saturated
ideal of $U$ and $\mfa \ne U,$ then $e\notin\mfa ,$ since
$1+e=e.)$\end{proof}

\begin{thm}\label{thm5.11} Let $\mfa $ be a saturated ideal
of $U$ and $\mfa \ne U.$ Then
$$\mfb :=\{x\in U\ds |\exists n\in\mathbb N: ex^n\in\mfa\}$$
is a prime ideal of $U.$ It is the smallest prime ideal containing
$\mfa ,$ and it coincides with the radical $\sqrt{\mfa}$ of
$\mfa$, defined by $$\sqrt{\mfa} := \{ x \in U \ds | \exists n \in
\mathbb N : x^n \in \mfa\}.$$
\end{thm}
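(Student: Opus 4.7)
The plan is to push the problem down to the ghost level $M := eU$, where it becomes a question about ideals in a totally ordered monoid, and then lift back using Proposition \ref{prop5.10}.

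First I would introduce the auxiliary set $\mfc := \{z \in M \mid \exists n \in \mathbb N : z^n \in e\mfa\}$ in $M$ and verify that $\mfc$ is a saturated prime ideal of $M$. Additive closure is free because $M$ is bipotent: $z_1 + z_2 = \max(z_1,z_2) \in \{z_1,z_2\}$. Multiplicative closure under $M$ follows from $(zw)^n = z^n w^n \in e\mfa$. Saturation (i.e., $\mfc$ is a lower set of $M$) is immediate: if $w \le z \in \mfc$, then $w^n \le z^n \in e\mfa$, and $e\mfa$ is a lower set of $M$ by Corollary \ref{cor5.5}.ii since $\mfa$ is saturated. The key step is primeness: given $xy \in \mfc$, we have $x^n y^n \in e\mfa$ for some $n$; by total ordering of $M$ we may assume $x^n \le y^n$, whence $x^{2n} = x^n x^n \le x^n y^n \in e\mfa$, so the lower-set property forces $x^{2n} \in e\mfa$, i.e.\ $x \in \mfc$.

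Next, Proposition \ref{prop5.10} assigns to the saturated prime $\mfc$ of $M$ the saturated prime ideal $\mfb' := \{x \in U \mid ex \in \mfc\}$ of $U$. I would then identify $\mfb'$ with the $\mfb$ of the theorem: unwinding, $ex \in \mfc$ means $(ex)^n \in e\mfa$ for some $n$, and since $e$ is idempotent, $(ex)^n = ex^n$; moreover $ex^n \in e\mfa$ is equivalent to $ex^n \in \mfa$ because $ex^n$ lies in $M$ and $\mfa \cap M = e\mfa$. Thus $\mfb = \mfb'$ is a (saturated) prime ideal of $U$ containing $\mfa$.

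To finish I would show $\mfb = \sqrt{\mfa}$ and that $\mfb$ is the smallest prime containing $\mfa$. The inclusion $\sqrt{\mfa} \subset \mfb$ is trivial: $x^n \in \mfa$ forces $ex^n \in \mfa$. For the reverse, if $ex^n \in \mfa$ then saturation of $\mfa$ (in the form of Scholium \ref{schol5.8}.a: $ez \in \mfa \Rightarrow z \in \mfa$) gives $x^n \in \mfa$, so $x \in \sqrt{\mfa}$. Finally, for minimality, let $\mfp$ be any prime ideal of $U$ with $\mfa \subset \mfp$; then for $x \in \sqrt{\mfa}$ with $x^n \in \mfa \subset \mfp$, primeness of $\mfp$ gives $x \in \mfp$ by an immediate induction on $n$. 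I expect the prime-ness of $\mfc$ to be the step that actually uses the supertropical structure in an essential way — everything else is bookkeeping against the results of Section \ref{sec:5}.
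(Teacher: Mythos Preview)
Your argument is correct and mirrors the paper's proof almost step for step: both push the question to the ghost level $M$, show that the radical $\sqrt{e\mfa}$ is a saturated prime ideal of $M$ via the total ordering and lower-set property, and then lift back using Proposition~\ref{prop5.10}. The only detail you skip, which the paper makes explicit, is that $e\notin\mfa$ (from $1+e=e$ and $\mfa=\sat\mfa\ne U$), needed so that your $\mfc$ is a \emph{proper} ideal of $M$ and Proposition~\ref{prop5.10} applies.
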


\begin{proof} a) If $\mfc $ is an ideal of $M,$ let
$$\sqrt{\mfc }:=\{x\in M \ds |\exists n\in\mathbb N:
x^n\in\mfc \}.$$ In this notation
$$\mfb =\{x\in U \ds |ex\in \sqrt{e \mfa} \}.$$
By \propref{prop5.10} it is clear that it suffices to prove that
$\sqrt{e\mfa }$ is the smallest saturated prime ideal of $M$
containing $e\mfa .$ We have $e \notin \mfa$, since otherwise the
relation $1 + e = e$  would imply that $1 \in \sat \mfa = \mfa$.
Thus $e \notin \mfc$, hence $e \notin \sqrt{\mfc }$.\pSkip

b) Let $\mfc :=e\mfa .$ This is a saturated ideal of $M,$ i.e., an
ideal of $M$ which is a lower set of $M$ (cf. Scholium
\ref{schol5.8}). Clearly $M\cdot \sqrt{\mathfrak
c}\subset\sqrt{\mfc },$  and hence $\sqrt{\mfc }$ is an ideal of
$M$. Let $x\in\sqrt{\mfc },$ $y\in M$ and $y<x.$ Choosing some
$n\in\mathbb N$ with $x^n\in\mfc ,$ we have $y^n\le x^n;$ hence,
$y^n\in\mfc ,$ and $y\in\sqrt{\mfc }.$ Thus $\sqrt{\mfc }$ is a
lower set of $M.$ The ideal $\sqrt{\mfc }$ is saturated in $M.$
\pSkip

c) Let $x,y\in M$ be given with $xy\in\sqrt{\mfc }.$ Assume that
$y\le x$. We have $y^2\le xy,$ and hence $y^2\in\sqrt{\mfc },$
implying $y\in\sqrt{\mfc }.$ This proves that $\sqrt{\mfc }$ is
prime in $M.$ \pSkip

d) Let $\mfp $ be a prime ideal of $M$ containing $\mfc.$ If
$x\in\sqrt{\mfc } $ then $x^n\in\mfc \subset \mfp $ for some
$n\in\mathbb N$, and hence $x\in \mfp .$ \pSkip

e) If $x \in U$ and $ex^n \in \mfa$ for some $n \in \N$, then $x^n
\in \mfa$ since $x^n + e x^n = e x^n$ and $\mfa$ is saturated.
Thus $\mfb= \sqrt{\mfa}$.
\end{proof}

Our proof that $E_U(\mfa )$ is transmissive (\propref{prop4.5})
does not rely on the criterion \thmref{thm4.7} (nor on any other
theory). In particular, it is not necessary to assume that
$E_U(\mfa ) $ is ghost-cancellative (i.e., the ghost ideal
$M/E_U(\mfa )$ of $U/E_U(\mfa )$ is cancellative, cf. \S2). In
fact, the following theorem tells us that this often does not
hold.

\begin{thm}\label{thm5.12} Assume that $M=eU$ is a cancellative
semidomain. Let $\mfa $ be a saturated ideal of $U$ with $\mfa \ne
U$. The following are equivalent:
\begin{enumerate}\item[(1)] The ghost ideal $M/E_U(\mfa )$
of $U/E_U(\mfa )$ is a cancellative semidomain. \item[(2)] $e\mfa
$ is a prime ideal of $M.$ \item[(3)] $ \mfa $ is a prime ideal of
$U.$\end{enumerate}\end{thm}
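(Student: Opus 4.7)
The plan is to split the three-way equivalence into two parts: $(2) \Leftrightarrow (3)$, which will come directly from the classification of saturated prime ideals in \propref{prop5.10}, and $(1) \Leftrightarrow (2)$, which will be handled by reducing the ghost quotient of $U/E_U(\mathfrak{a})$ to a quotient of $M$ itself, and then using the explicit description of $E_M$-classes furnished by \thmref{thm5.4}.

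For $(2) \Leftrightarrow (3)$: since $\mathfrak{a}$ is saturated in $U$ with $\mathfrak{a}\ne U$, the identity $1+e=e$ forces $e\notin\mathfrak{a}$, and \corref{cor5.5}.iv gives that $e\mathfrak{a}=\mathfrak{a}\cap M$ is saturated in $M$. \propref{prop5.10}.ii then yields the equivalence without further work. For $(1)\Leftrightarrow(2)$, set $\mathfrak{c}:=e\mathfrak{a}$; by \corref{cor5.5}.vi, $E_U(\mathfrak{a})|M=E_M(\mathfrak{c})$, so the ghost ideal of $U/E_U(\mathfrak{a})$ is canonically identified with the bipotent semiring $M/E_M(\mathfrak{c})$. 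Applying \thmref{thm5.4} to $M$ and the saturated ideal $\mathfrak{c}$, the $E_M(\mathfrak{c})$-classes are precisely $\mathfrak{c}$ itself (playing the role of $0$ in the quotient) together with the singletons $\{x\}$, $x\in M\smin\mathfrak{c}$. Since $e\notin\mathfrak{c}$, the quotient is a nontrivial bipotent semiring.

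With this description in hand, $(1)\Leftrightarrow(2)$ becomes a short case analysis. For $(1)\Rightarrow(2)$: if $xy\in\mathfrak{c}$ then $[x][y]=[xy]=[0]$ in $M/E_M(\mathfrak{c})$, and the semidomain property forces $[x]=[0]$ or $[y]=[0]$, i.e.\ $x\in\mathfrak{c}$ or $y\in\mathfrak{c}$. For $(2)\Rightarrow(1)$: primality of $\mathfrak{c}$ gives the semidomain property by the same reading in reverse; cancellativity is handled by considering, given $[x][z]=[y][z]$ with $z\notin\mathfrak{c}$, whether $xz,yz$ belong to $\mathfrak{c}$. If neither does, then by \thmref{thm5.4} the classes $[xz],[yz]$ are singletons, so $xz=yz$ and cancellativity of $M$ yields $x=y$. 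If both $xz,yz\in\mathfrak{c}$, primality together with $z\notin\mathfrak{c}$ gives $x,y\in\mathfrak{c}$, whence $[x]=[y]=[0]$. The only remaining configuration---one of $xz,yz$ in $\mathfrak{c}$ and the other not---cannot occur: \thmref{thm5.4} shows the class of the one outside $\mathfrak{c}$ is a singleton not containing any element of $\mathfrak{c}$, so equality of classes would force $xz=yz$, contradicting the hypothesis. There is no serious obstacle here; the main care needed is to keep straight the notational interplay between $\mathfrak{a}\subset U$ and $\mathfrak{c}=e\mathfrak{a}\subset M$, and to use both the cancellativity and the semidomain hypothesis on $M$ at the right moments in the case analysis.
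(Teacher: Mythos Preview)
Your proof is correct and follows essentially the same route as the paper's: reduce the ghost quotient to $M/E_M(e\mathfrak{a})$ via \corref{cor5.5}.vi, use the explicit class description from \thmref{thm5.4} for the $(1)\Leftrightarrow(2)$ case analysis, and invoke \propref{prop5.10}.ii for $(2)\Leftrightarrow(3)$. The only cosmetic differences are that the paper organizes the cancellativity case split by whether $x\in\mathfrak{c}$ (rather than by whether $xz,yz\in\mathfrak{c}$), and it treats the semidomain property as absorbed into cancellativity (taking $y=0$), whereas you separate it out explicitly; both organizations are equivalent.
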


\begin{proof} a) We first study the case that $U$ is ghost, i.e.,
$U=M.$ Condition (1) means the following.
$$\forall x,y,x\in M: \quad xz\sim_{\mfa }yz, \  z\notin
\mfa \ \Rightarrow\ x\sim_{\mfa }y.$$
 If this holds, then taking $y=0$ we see that $\mfa $ is a
 prime ideal. This proves (1) $\Rightarrow$ (2).

 Assume now that $\mfa $ is prime. Let $x,y,z\in M$ be given
 with $xz\sim_{\mfa }yz$ and $z\notin \mfa .$

\textit{Case 1}. $x\in\mfa .$ Then $yz\in\mfa .$ Since $\mfa$ is
prime, we conclude that $y\in\mfa .$ Thus,
$x\sim_{\mfa}0\sim_{\mfa }y.$

\textit{Case 2}. $x\notin \mfa .$ Now $xz\notin \mfa .$ Taking
into account \thmref{thm5.4} we obtain $xz=yz.$ Since $M$ is
cancellative, this implies $x=y.$ Thus $x\sim_{\mfa }y$ in both
cases. This proves (2) $\Rightarrow$ (1). \pSkip

b) Let now $U$ be any supertropical semiring. The ideal $e\mfa $
is saturated in $M$ (cf. Scholium \ref{schol5.8}), and $M/E_U(\mfa
)=M/E_M(e\mfa )$ (cf. Corollary \ref{cor5.5}.vi).

Applying what has been proved to $M$ and $e\mfa $, we see that
$M/E_U(\mfa )$ is cancellative iff $e\mfa $ is prime in $M.$ By
\propref{prop5.10}.ii this is equivalent to $\mfa $ being prime in
$U.$\end{proof}

\begin{examp}\label{examp5.13} Let $M:=[0,1]$ be the closed unit
interval of $\mathbb R$ with the usual multiplication and the
addition $x+y:=\max(x,y).$ $M$ is a cancellative bipotent
semidomain. We choose some $\theta\in]0,1[.$ Then
$\mfa:=[0,\theta]$ is an ideal and a lower set of $M,$ and hence
is a saturated ideal of $M.$ But $\mfa $ is not prime, since the
half open interval  $]\theta,1]$ is not closed under
multiplication. In fact, the only saturated prime ideals of $M$
are $\{0\}$ and $[0,1[.$

The bipotent semiring $M/E(\mfa )$ can be identified with the
subset $\{0\}\cup]\theta,1]$ of $[0,1]$ equipped with  the new
multiplication
$$x\odot y=\begin{cases} xy &\text{if}\quad xy>\theta\\
0&\text{if}\quad xy\le\theta\end{cases}$$ and the addition
$$x\oplus y=\max(x,y).$$\end{examp}

\begin{thm}\label{thm5.14} If $\mfa $ is any ideal of a
supertropical semiring $U$, then the transmissive equivalence
relation $E(\mfa )$ is pushout (i.e., the transmission $\pi_{\mfa
}$ is pushout, cf.~Definition~\ref{defn4.12}).
\end{thm}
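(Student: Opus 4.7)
The plan is to apply Theorem \ref{thm4.13}, which gives a clean sufficient condition for a transmissive equivalence relation to be pushout: namely, if $x \in \mathcal T(U)$, $y \in U$ and $x \sim_E y$, then either $x = y$ or $x \sim_E 0$. We already know from Theorem \ref{thm5.2} that $E(\mfa)$ is transmissive, so only this auxiliary condition remains to be checked.

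The verification is essentially a direct reading of the explicit description of the equivalence classes provided by Theorem \ref{thm5.4}. Suppose $x \in \mathcal T(U)$ and $x \sim_{\mfa} y$. By Theorem \ref{thm5.4}, either $x \in \sat \mfa$ or the equivalence class $[x]_{\mfa} = \{x\}$ is a singleton. In the first case, $x \sim_{\mfa} 0$ (since $\sat \mfa = [0]_{\mfa}$), and then $y \sim_{\mfa} 0$ too by transitivity. In the second case, the relation $x \sim_{\mfa} y$ forces $y = x$. Thus the hypothesis of Theorem \ref{thm4.13} is satisfied, and the conclusion follows.

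I do not foresee any real obstacle: the entire content lies in having already set up Theorems \ref{thm5.4} and \ref{thm4.13}, and the proof reduces to a dichotomy on whether $x \in \sat \mfa$. If anything, the only point worth emphasizing is that the condition in Theorem \ref{thm4.13} is asymmetric — it is stated for $x$ tangible — so one need not worry about the case $x \in M$; and the characterization in Theorem \ref{thm5.4} applies equally well whether $x$ is tangible or ghost, in particular covering tangible $x$. Consequently the proof can be as brief as one paragraph, simply citing Theorems \ref{thm5.4} and \ref{thm4.13}.
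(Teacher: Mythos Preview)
Your proposal is correct and matches the paper's own proof essentially verbatim: the paper also reduces (after noting one may assume $\mfa$ saturated) to checking the hypothesis of Theorem~\ref{thm4.13} via the description of the equivalence classes in Theorem~\ref{thm5.4}. Your handling of the saturation issue by working directly with $\sat\mfa = [0]_{\mfa}$ is equivalent to the paper's preliminary reduction.
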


\begin{proof} We may assume that $\mfa $ is saturated.
Looking at the description of $E(\mfa )$ in \thmref{thm5.4}, we
realize that the hypothesis in \thmref{thm4.13} holds for
$E=E(\mfa ).$ Thus, $E(\mfa  )$ is pushout.\end{proof}

It follows that, in the terminology of Notation \ref{notation1.7},
$$E(\mfa )=E(U,\gm _{\mfa })$$
with $\gm _{\mfa }$ the map $x\mapsto [x]_{\mfa }$ from $M$ to
$M/E(\mfa )$ covered by $\pi_{\mfa }. $ Notice that
\thmref{thm5.14} is not covered by the central result Theorem
\ref{thm1.13} in \S1, since we do not assume cancellation for
$M/E(\mfa ).$

We draw a connection from the relations $E(\mfa )$ to other
equivalence relations.

\begin{thm}\label{thm5.15} Let $E$ be a $TE$-relation (e.g., $E$ is a
transmissive equivalence relation). The set $\mfq : =[0]_E$ is a
saturated ideal of $U$ with $E(\mfq )\subset E.$ Moreover, $\mfq $
is the biggest ideal $\mfa $ of $U$ with $E(\mfa )\subset
E.$\end{thm}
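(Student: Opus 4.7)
The plan is to verify the three claims — that $\mathfrak{q}:=[0]_E$ is an ideal, that it is saturated, and that it is maximal among ideals $\mathfrak{a}$ with $E(\mathfrak{a})\subset E$ — by exploiting the three axioms TE1--TE3 in order, together with the explicit description of $E(\mathfrak{a})$ provided by \thmref{thm5.4}.

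First, I would check that $\mathfrak{q}$ is an ideal. Closure under multiplication by $U$ is immediate from TE1: if $x\sim_E 0$ and $z\in U$, then $xz\sim_E 0\cdot z=0$. For closure under addition, let $x,y\in\mathfrak{q}$ and invoke the supertropical addition rule (the displayed formula in the Introduction). If $ex<ey$ then $x+y=y\in\mathfrak{q}$, and symmetrically if $ex>ey$. In the remaining case $ex=ey$ we get $x+y=ex$; but applying TE1 to $x\sim_E 0$ with the factor $e$ yields $ex\sim_E 0$, so $x+y\in\mathfrak{q}$ again. Note that the same argument gives $ex\in\mathfrak{q}$ whenever $x\in\mathfrak{q}$, which is needed below.

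Second, to show $\mathfrak{q}$ is saturated I would appeal to the characterization in Corollary~\ref{cor5.5}.ii: saturation is equivalent to (a) $e\mathfrak{q}$ being a lower set of $M$, and (b) every $x\in U$ with $ex\in\mathfrak{q}$ lying in $\mathfrak{q}$. For (a), by TE2 together with Remark~\ref{rem4.1}, the $E|M$-class of $0$ is convex in $M$; since $e\mathfrak{q}$ is precisely this class, any $y\in M$ with $0\le y\le x$ for some $x\in e\mathfrak{q}$ satisfies $y\sim_E 0$, hence lies in $e\mathfrak{q}$. For (b), if $ex\sim_E 0$ then TE3 gives $x\sim_E 0$, i.e. $x\in\mathfrak{q}$. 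So $\mathfrak{q}$ is saturated.

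Third, to show $E(\mathfrak{q})\subset E$ I would apply \thmref{thm5.4} to the saturated ideal $\mathfrak{q}$: its equivalence classes are exactly $\mathfrak{q}$ itself and the singletons $\{x\}$ for $x\notin\mathfrak{q}$. So $x\sim_{\mathfrak{q}} y$ forces either $x=y$ (trivially $x\sim_E y$) or $x,y\in\mathfrak{q}$, in which case $x\sim_E 0\sim_E y$. Finally, for maximality, if $\mathfrak{a}$ is any ideal with $E(\mathfrak{a})\subset E$, then every $a\in\mathfrak{a}$ satisfies $a\sim_{\mathfrak{a}} 0$ (witnessed by $a+0=0+a$ with $0,a\in\mathfrak{a}$), hence $a\sim_E 0$, so $a\in\mathfrak{q}$; thus $\mathfrak{a}\subset\mathfrak{q}$. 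The only non-formal point is the additive closure of $\mathfrak{q}$ in step one, which is the main place where the supertropical addition rule must be married to TE1 — the rest is a straightforward bookkeeping through the preceding results.
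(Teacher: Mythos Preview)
Your proof is correct and follows essentially the same approach as the paper: TE1 for multiplicative closure and $e\mathfrak{q}\subset\mathfrak{q}$, TE2 (via Remark~\ref{rem4.1}) for the lower-set property of $e\mathfrak{q}$, TE3 for the fiber condition, and \thmref{thm5.4} for $E(\mathfrak{q})\subset E$. The only cosmetic difference is ordering: the paper first establishes that $ex\in\mathfrak{q}\Rightarrow x\in\mathfrak{q}$ and then dispatches additive closure in one line via $e(x+y)=ey\in\mathfrak{q}$, whereas you handle additive closure directly by the three-case supertropical addition rule and treat saturation afterward.
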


\begin{proof} a) If $x\sim_E0$, then $zx\sim_E0$ for any $z\in U.$
Thus $U\cdot \mfq \subset \mfq .$

b) From $e\mfq \subset\mfq $ we conclude that $e\mfq=\mfq \cap
M=[0]_E\cap M.$ This is convex in $M$ and contains $0$, hence is
a lower set of $M$. \pSkip

c) By axiom $TE3$ every $x\in U$ with $ex\in\mfq $ is an element
of $\mfq .$ \pSkip

d) Let $x,y\in\mfq $ be given, and assume without loss of
generality that $ex\le ey.$ Then $e(x+y)=ey\in\mfq ,$ and hence
$x+y\in\mfq .$ This completes the proof that $\mfq $ is an ideal
of $U.$ We conclude from c) and Scholium \ref{schol5.8}.a  that
this ideal is saturated. \pSkip

e) The equivalence classes of $E(\mfq )$ are  $\mfq =[0]_E$ and
one-point sets (\thmref{thm5.4}). Thus, certainly $E(\mfq)\subset
E.$ If $E(\mfa )\subset E$ then
$$\mfa \subset \sat\mfa =[0]_{\mfa}\subset [0]_E=\mfq .$$\end{proof}

\begin{notation}\label{notation5.16}
If $E,F$ are equivalence relations on a set $X$ with $E\subset F,$
we denote by $F/E$ the equivalence relation induced by $F$ on the
set $X/E.$ Thus, for $x,y\in X,$
$$[x]_E\sim_{F/E}[y]_E\quad\Leftrightarrow\quad x\sim_Fy.$$
\end{notation}

\begin{prop}\label{prop5.17} Let $E$ be a transmissive equivalence relation on $U$
and $\mfq :=[0]_E.$ We know by \thmref{thm5.15} that $\mfq$ is an
ideal of $U$ and $E(\mfq )\subset E.$ Let $$\olE:=E/E(\mfq ).$$
\begin{enumerate}\item[i)] $\olE $ is transmissive. \item[ii)]
$\olE $ is pushout iff $E$ is pushout.\end{enumerate}\end{prop}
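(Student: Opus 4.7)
The plan is to exploit the canonical factorization $\pi_E = \pi_{\olE} \circ \pi_{\mfq}$, which arises because by the very definition of $\olE = E/E(\mfq)$ the natural map $U/E \to (U/\mfq)/\olE$, $[x]_E \mapsto [[x]_{\mfq}]_{\olE}$, is a bijection; and because by Theorem~\ref{thm5.2}, $\pi_{\mfq}$ is a surjective semiring homomorphism, hence in particular a surjective transmission. For (i), I will transport the supertropical semiring structure from $U/E$ to $(U/\mfq)/\olE$ via this identification; then $\pi_{\olE}$ is the unique set-theoretic map making the triangle commute, and \cite[Proposition~6.1.ii]{IKR1} applied to the factorization $\pi_E = \pi_{\olE} \circ \pi_{\mfq}$ (with $\pi_E$, $\pi_{\mfq}$ transmissions and $\pi_{\mfq}$ surjective) forces $\pi_{\olE}$ to be a transmission, so $\olE$ is transmissive.

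For the direction $E$ pushout $\Rightarrow$ $\olE$ pushout of (ii), I pull back the test data $(\tilde\bt,\delta)$ for $\olE$ to test data $(\bt':=\tilde\bt\circ\pi_{\mfq},\delta)$ for $E$; the ghost-level compatibility transfers since $\pi_E=\pi_{\olE}\circ\pi_{\mfq}$. Applying the pushout property of $E$ yields a unique transmission $\eta:U/E\to W$ with $\bt'=\eta\pi_E$ and $\eta^\nu=\delta$, and cancelling the surjective $\pi_{\mfq}$ from $\eta\pi_{\olE}\pi_{\mfq}=\tilde\bt\pi_{\mfq}$ gives the required $\tilde\eta=\eta$ for $\olE$; uniqueness transfers by the same cancellation.

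The converse direction $\olE$ pushout $\Rightarrow$ $E$ pushout has one substantive step: given test data $(\bt',\delta)$ for $E$ with $(\bt')^\nu=\delta\circ(\pi_E)^\nu$, I must show that $\bt'$ vanishes on $\mfq$, so that it descends through $\pi_{\mfq}$. For $x\in\mfq=[0]_E$ the compatibility gives $e\bt'(x)=\bt'(ex)=\delta([ex]_E)=\delta([0]_E)=0$, and the supertropical axiom $ey=0\Rightarrow y=0$ then forces $\bt'(x)=0$; by Theorem~\ref{thm5.4} the only non-singleton $E(\mfq)$-class is $\mfq$ itself, so $\bt'$ descends to $\tilde\bt:U/\mfq\to W$, which is a transmission by \cite[Proposition~6.1.ii]{IKR1}. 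A direct check on ghost representatives yields $\tilde\bt^\nu=\delta\circ(\pi_{\olE})^\nu$; the pushout property of $\olE$ then produces the required $\eta$ under the identification $U/E=(U/\mfq)/\olE$, with uniqueness following from surjectivity of $\pi_E$.

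The main obstacle is precisely this vanishing of $\bt'$ on $\mfq$ in the converse direction: without the supertropical axiom $ey=0\Rightarrow y=0$, the ghost-level compatibility would not suffice to force $\bt'$ to kill $\mfq$, and $\bt'$ might fail to descend through $\pi_{\mfq}$. Once this is settled, everything reduces to formal diagram chases using surjectivity of the projections and the uniqueness clauses in the pushout definitions.
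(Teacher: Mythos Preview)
Your proof of part (i) is correct and essentially identical to the paper's: both invoke the factorization $\pi_E=\pi_{\olE}\circ\pi_{\mfq}$ and appeal to \cite[Proposition~6.1.ii]{IKR1}.

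For part (ii) your argument is correct but follows a genuinely different route. The paper proceeds categorically: it observes that the left square in the diagram
\[
\xymatrix{
U \ar[r]_-{\pi_{\mfq}} & U/E(\mfq) \ar[r]_-{\pi_{\olE}} & U/E \\
M \ar[u] \ar[r]_-{\gm_{\mfq}} & M/E(\mfq) \ar[u] \ar[r]_-{\gm_{\olE}} & M/E \ar[u]
}
\]
is a pushout in STROP (because $E(\mfq)$ is pushout by Theorem~\ref{thm5.14}, or directly by Theorem~\ref{thm4.13}), and then invokes the pushout pasting law \cite[p.~72, Exercise~8]{ML} to conclude that the outer rectangle is pushout iff the right square is. Both directions of (ii) fall out simultaneously. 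Your approach, by contrast, unpacks the universal property explicitly in each direction. In the converse you do not cite the pushout property of $E(\mfq)$ at all; instead you show directly that any transmission $\bt'$ with $(\bt')^\nu=\delta\circ(\pi_E)^\nu$ kills $\mfq$ (via $e\bt'(x)=0\Rightarrow\bt'(x)=0$) and hence descends through $\pi_{\mfq}$ by the description of $E(\mfq)$-classes in Theorem~\ref{thm5.4}. This is in effect a bespoke reproof of exactly the fragment of Theorem~\ref{thm5.14} needed here. The paper's argument is shorter and more conceptual, reusing earlier structural results; yours is self-contained and makes the role of the supertropical axiom $ey=0\Rightarrow y=0$ explicit at the point of use.
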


\begin{proof} i) We have the factorization $\pi_E=\pi_{\olE}
\circ\pi_{\mfq }.$ Since $\pi_E$ and $\pi_{\mfq } $ are
transmissive and $\pi_{\mfq }$ is surjective, we conclude that
$\pi_{\olE }$ is transmissive (cf.~\cite[Proposition 6.1.ii or
Corollary~6.2]{IKR1}). \pSkip

 ii)
We have a natural commuting diagram of transmissions
$$
\begin{xy}
\xymatrix{   U   \ar @{>}[r]_-{\pi_ \mfq  }  &
U/E(\mfq)\ar@{>}[r]_
{\pi_{\olE }} & U/E  \\
 M \ar  [u] \ar@{>}[r]_ -{\gm _{\mfq }} & M/E(\mfq )
   \ar@{>}[r]_{\gm _  {\olE }}
   \ar [u] &
 M/E \ar  [u]
}
\end{xy}
$$
with $\gm _{\mfq }$ and $\gm _  {\olE }$ the ghost components of
$\pi_{\mfq }$ and $\gm _  {\olE }$, respectively. \thmref{thm4.13}
tells us that the left square is pushout in the category STROP.
Since $\gm _{\mfq }$ is surjective, it follows that the outer
rectangle is   pushout iff the right square is pushout (e.g.
\cite[p. 72, Exercise 8]{ML}). This gives the second claim.
\end{proof}

\section{Homomorphic equivalence relations}\label{sec:6}

Let $R$ be a semiring.

\begin{defn}\label{defn6.1} We call an equivalence relation
$E$ on $R$ \bfem{additive}, if
$$\forall x,y,z\in R: x\sim_Ey\Rightarrow x+z\sim_E y+z,$$
and \bfem{multiplicative}, if
$$\forall x,y,z\in R: x\sim_Ey\Rightarrow xz\sim_E yz.$$
We call $E$ \bfem{homomorphic}, if $E$ is both additive and
multiplicative.\end{defn}

If $E$ is homomorphic, we have a well-defined addition and
multiplication on the set $R/E,$ given by the rules ($x,y\in R):$
$$[x]_E+[y]_E=[x+y]_E,\quad [x]_E\cdot [y]_E= [xy]_E,$$ and these
make $R/E$ a semiring. Moreover, we can say that an equivalence
relation $E$ on $R$ is homomorphic, iff there exists a (unique)
semiring structure on the set $R/E$, such that $\pi_E: R\to R/E,$
$x\mapsto [x]_E,$ is a homomorphism.

In the following, $U$ is always a supertropical semiring and
$M:=eU$ is its ghost ideal.

\begin{examples}\label{examps6.2}
We have already seen two instances of homomorphic equivalence
relations on $U,$ namely, the MFCE-relations and the relations
$E(\mfa )$ with $\mfa $ an ideal of $U.$

On the other hand, if $\gm : M\to M'$ is a homomorphism from $M$
to a cancellative bipotent semiring $M',$ the transmissive
equivalence relation $E:=E(U,\gm )$ (cf. Theorem \ref{thm1.12})
will usually not be additive, hence not homomorphic. Indeed, if
$x_1,x_2\in M,$ $z\in \mathcal T(U)$ and $ex_1<ez<ex_2,$
$x_1\sim_E x_2,$ i.e., $\gm (x_1)=\gm (x_2),$ but $\gm (x_1)\ne
0,$ then $x_1+z=z\in\mathcal T(U)$ and $x_2+z=x_2\in M;$ hence,
$x_1+z\not\sim_Ex_2+z.$
\end{examples}

 We have the following remarkable fact, a special case of which
 occurred already in \thmref{thm5.2}.

 \begin{thm}\label{4.3a} Every homomorphic equivalence relation on
 $U$ is transmissive. \{In other terms, every homomorphic image of
 a supertropical semiring is again supertropical.\}\end{thm}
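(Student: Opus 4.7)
The plan is to exploit the fact that a semiring homomorphism between supertropical semirings is automatically a transmission, so that almost all the work reduces to checking that the quotient $U/E$ is again supertropical. Since $E$ is homomorphic, $U/E$ carries a unique semiring structure making $\pi_E\colon U \to U/E$ a semiring homomorphism; the transmission axioms TM1--TM5 of \cite[\S5]{IKR1} then follow directly (TM1--TM4 from the homomorphism property together with $\pi_E(e_U) = e_{U/E}$, and TM5 because $\pi_E$ restricts to a semiring homomorphism on $eU$). Thus it suffices to verify the supertropical axioms (3.3$'$), (3.3$''$), (3.3) of \cite[\S3]{IKR1} for $U/E$.

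Writing $\bar x := \pi_E(x)$ and $\bar e := \pi_E(e)$, axiom (3.3$'$) will be immediate, as $\bar e + \bar e = \overline{e + e} = \bar e$. Axiom (3.3) is almost as easy: if $\bar e \bar x \ne \bar e \bar y$ in $U/E$, then the supertropical rule in $U$ forces $x + y \in \{x, y, ex\}$, and the third possibility requires $ex = ey$, contradicting $\bar e \bar x \ne \bar e \bar y$. So $x + y \in \{x, y\}$, and $\pi_E$ delivers $\bar x + \bar y \in \{\bar x, \bar y\}$.

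The main obstacle will be axiom~(3.3$''$): from $\bar e \bar x = \bar e \bar y$ I must derive $\bar x + \bar y = \bar e \bar x$. If $ex = ey$ already holds in $U$, this is simply the image of (3.3$''$) for $U$ under $\pi_E$. The delicate case is $ex \sim_E ey$ with $ex \ne ey$ in $U$, say $ex < ey$; here the supertropical rule in $U$ gives $x + y = y$, and the task becomes $\bar y = \bar e \bar x$. I plan to invoke additivity of $E$: adding $y$ to both sides of the relation $ex \sim_E ey$ gives $ex + y \sim_E ey + y$, and the supertropical rule in $U$ evaluates these as $y$ (since $e(ex) = ex < ey = e(y)$) and $ey$ respectively. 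Hence $y \sim_E ey$, so $\bar y = \bar e \bar y = \bar e \bar x$ as required; the case $ex > ey$ will be symmetric.

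This additivity-dependent bridging step is really the whole content of the theorem: it is precisely where generic transmissive equivalence relations fail (cf.\ Examples~\ref{examps6.2}), and it explains why homomorphic quotients of supertropical semirings remain supertropical. Once (3.3$'$), (3.3$''$), (3.3) are in place for $U/E$, the ghost ideal $e(U/E)$ is automatically bipotent and $\pi_E$ is by construction a transmission, so nothing further is needed.
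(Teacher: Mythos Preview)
Your proof is correct and follows essentially the same approach as the paper: both reduce to verifying the supertropical axioms for $U/E$, identify (3.3$''$) as the only nontrivial case, and handle the case $ex<ey$ with $ex\sim_E ey$ by adding $y$ to both sides and using the supertropical sum rules in $U$ to conclude $y\sim_E ey\sim_E ex$. The paper is slightly terser, referring back to the proof of Theorem~\ref{thm5.2} for why (3.3$'$) and (3.3) need no serious consideration, whereas you spell them out explicitly.
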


\begin{proof} As in the proof of \thmref{thm5.2}, we see that
the only problem is to prove that the semiring $U/E$ is
supertropical. For this only the axiom (3.3$''$) from
\cite[\S3]{IKR1} needs serious consideration.

Given $x,y\in U$ with $ex\sim_Eey$, we have to verify that
$ex\sim_Ex+y.$ We may assume that $ex\le ey$. Now, if $ex=ey,$
then $ex=x+y.$ If $ex<ey,$ then $x+y=y$ and $ex+y=y,$ hence
$$x + y = ex+y\sim_E ey+y=ey\sim_Eex.$$
Thus, indeed $ex\sim_Ex+y$ in both cases.\end{proof}

We seek a more detailed understanding of the homomorphic
equivalence relations on a supertropical semiring $U.$ As an
intermediate step we analyze the additive equivalence relations on
$U.$

\begin{prop}\label{prop6.3} Let $E$ be an equivalence relation on
$U.$ The following are equivalent.
\begin{enumerate}\item[(1)] $E$ is additive.
\item[(2)] $E$ obeys the following rules.
\begin{align*}
\AEi:  \quad &  x\sim_Ey\Rightarrow ex\sim_E ey. \\
\AEii: \quad & E|M \text{ is order compatible.}\\
\AEiii: \quad & \text{If $ex<ey$ and $ex\sim_Eey,$ then
$ex\sim_Ey.$}
\end{align*}

\end{enumerate}
\end{prop}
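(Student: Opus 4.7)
The plan is to prove the two implications separately.

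For $(1) \Rightarrow (2)$, assume $E$ is additive. Axiom AE1 falls out of $x \sim_E y$ by adding first $x$ and then $y$: $ex = x+x \sim_E y+x = x+y \sim_E y+y = ey$. For AE2, given $x_1 \le x_2$, $x_3 \le x_4$ in $M$ with $x_1 \sim_E x_4$ and $x_2 \sim_E x_3$, I would add $x_3$ to the first relation to obtain $x_1 + x_3 \sim_E x_3 + x_4 = x_4$ (using $x_3 \le x_4$ in the bipotent $M$), and add $x_1$ to the second to obtain $x_2 = x_1 + x_2 \sim_E x_1 + x_3$; chaining these with $x_1 \sim_E x_4$ forces $x_1 \sim_E x_2$. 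For AE3, adding $y$ to $ex \sim_E ey$ gives $ex + y \sim_E ey + y$; the supertropical sum rule collapses the left-hand side to $y$ (since $ex < ey$) and the right-hand side to $ey$, so $y \sim_E ey \sim_E ex$.

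For $(2) \Rightarrow (1)$, assume AE1--AE3, fix $x \sim_E y$ and an arbitrary $z \in U$, and aim at $x + z \sim_E y + z$. AE1 delivers $ex \sim_E ey$. If $ex = ey$, the supertropical addition rule expresses both $x+z$ and $y+z$ purely in terms of the comparison of $ez$ with the common ghost value, and each of the three sub-possibilities (strict less, strict greater, equal) yields either literal equality of the two sums or reduces to the given $x \sim_E y$. Otherwise one may assume $ex < ey$; AE3 then gives $ex \sim_E y$, and combined with $x \sim_E y$ this places all four elements $x, y, ex, ey$ into a single $E$-class.

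It then remains to compare $x + z$ and $y + z$ according to the position of $ez$. The subcases $ez < ex$, $ez > ey$, $ez = ex$, and $ez = ey$ each reduce by direct computation of the two sums to equivalences already inside the common class (using AE3 once more when $ez \in \{ex, ey\}$). The decisive subcase is $ex < ez < ey$, where $x + z = z$ and $y + z = y$, so one must establish $z \sim_E y$: AE2 asserts that the $E|M$-class of $ex$ is convex in $M$, so from $ex \sim_E ey$ and $ex < ez < ey$ one concludes $ez \sim_E ex$; AE3 applied to the pair $(x, z)$ (valid because $ex < ez$) then lifts this ghost-level equivalence to $ex \sim_E z$, yielding $z \sim_E ex \sim_E y$. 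This middle subcase is the main obstacle and the one place where AE2 and AE3 must be invoked in sequence: AE2 inserts the intermediate ghost $ez$ into the shared ghost class, and AE3 promotes that ghost equivalence to the tangible level of $z$, which is precisely why all three axioms are needed.
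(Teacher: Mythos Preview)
Your proof is correct and follows essentially the same route as the paper's: both directions are handled by the same additivity manipulations, and the backward direction is a case split governed by the position of $ez$ relative to $ex$ and $ey$. The only cosmetic differences are that for $\AEii$ you verify the condition (OC) directly whereas the paper checks convexity of the $E|M$-classes (equivalent by Remark~\ref{rem4.1}), and in $(2)\Rightarrow(1)$ you first separate $ex=ey$ from $ex<ey$ before splitting on $ez$, while the paper runs a single six-case enumeration assuming $ex_1\le ex_2$; the substance of each subcase is identical.
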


\begin{proof} We write $\sim$ for $\sim_E.$
$(1)\Rightarrow(2)$:

a) If $x\sim y,$ then
$$ex=x+x\sim y+x\sim  y+y=ey.$$ \pSkip

b) We verify that every equivalence class of $E|M$ is convex,
which will prove order compatibility of $E|M$ (cf. Remark
\ref{rem4.1}). Let $x_1,x_2,y\in M$ and assume that $x_1 \sim x_2$
and  $x_1<y<x_2.$ Then
$$y=x_1+y\sim x_2+y=x_2;$$ hence also $y\sim x_1.$ \pSkip

c) Assume that $ex<ey$ and $ex\sim_E ey.$ Then
$$y=ex+y\sim ey+y=ey\sim ex.$$

$(2) \Rightarrow (1):$ Given $x_1,x_2,z\in U$ with $x_1\sim x_2,$
we have to verify that $x_1+z\sim x_2+z.$ We may assume that $ex_1
\leq ex_2$.

We distinguish six cases.
\begin{enumerate}
    \item[1)] If $ez<ex_1,$ we have $z+x_i=x_i$ $(i=1,2).$

\item[2)] If $ez>ex_2,$ we have $z+x_i=z$ $(i=1,2).$

\item[3)] If $ex_1=ez<ex_2,$ then $z+x_1=ex_1,$ $z+x_2=x_2.$  By AE3, we
have $ex_1\sim x_2.$

\item[4)] If $ex_1<ez<ex_2,$ then $z+x_1=z,$ $z+x_2=x_2.$ By AE3, we have
$ex_1\sim z,$ $ex_1\sim x_2.$

\item[5)] If $ex_1<ez=ex_2,$ then $z+x_1=z,$ $z+x_2=ex_2.$ By AE3,
$ex_1\sim z.$ By AE1, $ex_2\sim ex_1.$

\item[6)] If $ex_1=ez=ex_2,$ then $z+x_1=ez$ and $z+x_2=ez.$
\end{enumerate}

We see that in all six cases indeed $z+x_1\sim z+x_2.$
\end{proof}

\begin{example}\label{examp6.4}
Assume that $E$ is fiber conserving, i.e., $x\sim_Ey$ implies
$ex=ey$ (\cite[Definition 6.3]{IKR1}. Then $E$ is additive.
Indeed, the conditions \emph{\text{AE1--AE3}} hold trivially,
\emph{\text{AE3}} being empty.
\end{example}

\begin{thm}\label{thm6.5}
Every additive equivalence relation $E$ on $U$ arises in the
following way. Choose a partition $(M_i\ds | i\in I)$ into
non-empty convex subsets of $M.$ Let $J$ denote the set of all
indices $i\in I$ such that $M_i$ has a smallest element $a_i$ and
$a_i\ne0.$ Choose for every $i\in  J$ an equivalence relation
$E_i$ on the fiber $\{x\in U\ds |ex=a_i\}$. If $x,y$ are elements
of $U$ with $ex\le ey$, define
$$
\begin{array}{lll}
x\sim_Ey: &  \Leftrightarrow& \text{There exists some
$i\in I$ with $ex,ey\in M_i$;  } \\
& &\text{and in  case $i\in J$, either $ex>a_i$} \\ &&  \text{or
$ex=a_i$ and $x\sim_{E_i}ex,$} \\ &&\text{or $ex=ey=a_i$ and
$x\sim_{E_i}y.$}
\end{array}$$

If $x,y\in U$ and $ex>ey$, define, of course, $x\sim_Ey:\
\Leftrightarrow y\sim_Ex.$
\end{thm}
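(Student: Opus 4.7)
The plan is to prove both directions of the characterization: given an additive $E$, I will extract the data $(M_i)_{i\in I}$, $J$, $(a_i)_{i\in J}$, $(E_i)_{i\in J}$, and verify $E$ is described by the displayed formula; conversely, given such data, I will check that the formula defines an additive equivalence relation. The key engine in both directions is the following consequence of AE3: whenever $i\in J$, $ey\in M_i$, and $ey>a_i$, we automatically have $a_i\sim_E y$, since $a_i\sim_E ey$ (both lie in $M_i$) and $a_i<ey$. Similarly, if $M_i$ has no least element, then for every $x$ with $ex\in M_i$ there is some $b\in M_i$ with $b<ex$, whence AE3 gives $b\sim_E x$; chaining such $b$'s yields $x\sim_E y$ for any two elements with $ex,ey\in M_i$.

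For the forward direction, I would take $(M_i)_{i\in I}$ to be the partition of $M$ by $E|M$-classes; by AE2 combined with Remark~\ref{rem4.1}, each $M_i$ is convex. Define $J$ and the $a_i$ as in the statement, and let $E_i$ be the restriction of $E$ to the fiber $\{x\in U\ds|ex=a_i\}$. Given $x\sim_E y$ with $ex\le ey$, AE1 forces $ex,ey$ to lie in some common $M_i$. If $i\notin J$, either $M_i=\{0\}$ (whence $x=y=0$ using $ex=0\Rightarrow x=0$) or $M_i$ has no least element, and nothing further is required. If $i\in J$ and $ex>a_i$, again nothing is required. If $ex=a_i<ey$, the above AE3-observation gives $y\sim_E a_i$, so $x\sim_E a_i$, i.e.\ $x\sim_{E_i}ex$. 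If $ex=ey=a_i$, the condition $x\sim_{E_i}y$ is just $x\sim_E y$. The reverse implications (that the displayed disjunction implies $x\sim_E y$) reuse exactly the same AE3-collapse: case (a) gives $x\sim_E a_i\sim_E y$; case (b) combines $x\sim_E a_i$ with $a_i\sim_E y$; case (c) is immediate.

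For the backward direction, given the data one defines $\sim_E$ by the displayed formula and extends symmetrically for $ex>ey$. Reflexivity and symmetry are immediate, and transitivity is the one place genuine case analysis is unavoidable: given $x\sim_E y$ and $y\sim_E z$, one must handle the orderings of $ex,ey,ez$ within their common $M_i$, together with their positions relative to $a_i$ (strictly above versus equal). This splits into a handful of subcases, each resolved by applying the fiber equivalence $E_i$ and the fact that every element whose ghost is strictly above $a_i$ lies in the same $E$-class as $a_i$ by fiat of the formula. Once $E$ is known to be an equivalence relation, AE1 follows because the formula forces $ex,ey$ into the same $M_i$; AE2 is the built-in convexity of the $M_i$; AE3 reduces to observing that if $ex<ey$ in $M_i$, then either $ex>a_i$ (condition vacuous) or $ex=a_i$ (condition $ex\sim_{E_i}ex$, trivially true). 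Proposition~\ref{prop6.3} then yields additivity.

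The main obstacle is the transitivity check in the backward direction, which is largely bookkeeping but requires organizing the cases carefully so that the roles of the three elements and their ghosts are not conflated. A subtlety worth highlighting is the asymmetry of case (b) in the formula: the condition $x\sim_{E_i}ex$ constrains only $x$, precisely because any $y$ with $ey>a_i$ is forced into the $a_i$-class by AE3, so no independent condition on $y$ is needed — this is exactly why a single equivalence $E_i$ per fiber suffices to parametrize the additive relation, rather than a family of level-by-level equivalences.
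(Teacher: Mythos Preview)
Your proposal is correct and follows essentially the same route as the paper: extract $(M_i)$ as the $E|M$-classes and $E_i$ as the restriction of $E$ to the fiber over $a_i$, then use the characterization of additivity via AE1--AE3 (Proposition~\ref{prop6.3}) in both directions. The paper's proof is extremely terse (it declares the backward direction ``fairly obvious''), whereas you spell out the case analysis and the role of AE3 in collapsing elements above $a_i$; one small omission in your case split for $i\notin J$ is the possibility that $M_i$ has least element $0$ with $|M_i|>1$, but your AE3 argument with $b=0$ handles it verbatim.
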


\begin{proof}
Given an additive equivalence relation $E$ on $U$, this
description of $E$ holds with $(M_i\ds |i\in I)$ the set of
equivalence classes of $E\ds | M,$ indexed in some way, and
$E_i:=E\ds |M_i$ for $i\in J$, due to the properties AE1--AE3
stated in the \propref{prop6.3}. Conversely, if data $(M_i\ds
|i\in I)$ and $(E_i\ds |j\in J)$ are given, as indicated in the
theorem, it is fairly obvious that the binary relation defined
there is an equivalence relation obeying AE1--AE3. \{Notice that
the fiber $U_0$ over $0\in M$ is the one-point set $\{0\}$. Thus,
we may omit the index $i$ with $0\in M_i$ in the set $J.\}$
\propref{prop6.3} tells us that $E$ is additive.\end{proof}

When dealing with additive  equivalence relations, we now strive
for a more intrinsic notation than the one used in
\thmref{thm6.5}.

As noticed above (Remark \ref{rem4.1}), an additive (= order
compatible) equivalence relation $\Phi$ on $M$ is the same thing
as a partition of $M$ into convex subsets, namely, the partition
of $M$ into the equivalence classes of $\Phi,$
$$\Phi \ \widehat  =  \  (\xi \ | \ \xi\in M/\Phi).$$

\begin{notation}\label{notation6.6} $ $
\begin{enumerate}
    \item[a)] Given an additive equivalence relation  $\Phi$ on $M,$ define
$$L(\Phi):=\{x\in M\ds | x\ne0\ \text{and}\ x\le y\ \text{for
every}\ y\in M\ \text{with}\ x\sim_\Phi y\}.$$ Thus, $L(\Phi)$
consists of those $x\in L$ which are the smallest element of
$[x]_\Phi.$

\item[b)] If $E$ is an additive equivalence relation on $U,$ define
$$L(E):=L(E| M).$$
\end{enumerate}
\end{notation}

Of course, $L(\Phi)$ and $L(E)$ may be empty. Clearly,
$[0]_\Phi\cap L(\Phi)=\emptyset$ and $[0]_E\cap L(E)=\emptyset.$

We can rewrite \thmref{thm6.5} as follows:

\begin{theorem6.6'} \label{thm6.5'}
Given an additive equivalence relation $\Phi$ on $M$ and for every
$a\in L:=L(\Phi)$ an equivalence relation $E_a$ on the set
$$U_a:=\{x\in U\ds |ex=a\},$$ there exists a unique additive
equivalence relation $E$ on $U$ with $E|M=\Phi$ and $E|U_a=E_a$
for every $a\in L.$ It can be described as follows:

Let $x,y\in U$ and $ex\le ey.$

\begin{enumerate}\item[1)] If $ex\notin L,$ then
$$x\sim_Ey \ \Leftrightarrow \ ex\sim_{\Phi}ey.$$

\item[2)] If $ex=a\in L,$ but $ey>a,$ then
$$x\sim_Ey \ \Leftrightarrow \ ex\sim_\Phi ey \quad\text{and}\quad x\sim_{E_a}ex.$$

\item[3)] If $ex=ey=a\in L,$ then
$$x\sim_Ey\ \Leftrightarrow \ x\sim_{E_a}y.$$\end{enumerate}
\end{theorem6.6'}

\vskip 10pt

 We want to analyze   under which conditions on the
data $\Phi$ and $(E_a\ds | a\in L(\Phi))$  the additive relation
$E$  will also be multiplicative, hence homomorphic. For this we
need still another preparation, namely, a study of the set
$$A(E):=\{x\in U\ds |x\sim_E ex\}.$$

It turns out that it is appropriate to start with an even weaker
property of $E$ than additivity.

\begin{defn}\label{defn6.7} We call an equivalence  relation $E$
on the supertropical semiring $U$ \bfem{ghost compatible}, if the
condition \emph{\text{AE1}} from above holds, i.e.,
$$\forall x,y\in U: x\sim_Ey\Rightarrow ex\sim_Eey.$$
\end{defn}

Clearly, every multiplicative and every additive equivalence
relation is ghost compatible.

\begin{lem}\label{lem6.8}
{}\quad

\begin{enumerate}\item[a)] If $E$ is any equivalence relation on
 $U,$ then $M\subset A(E)$ and $A(E)+A(E)\subset A(E).$

\item[b)] If $E$ is ghost compatible, then
$$A(E)=\{x\in U \ds | \exists z\in M: x\sim_Ez\}.$$

\item[c)] If $E$ is multiplicative, then $A(E)$ is an ideal of
$U.$
\end{enumerate}
\end{lem}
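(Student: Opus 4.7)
The plan is to unpack the definition $A(E) = \{x \in U \mid x \sim_E ex\}$ and use the rules of supertropical addition recalled in the introduction together with the hypotheses on $E$ that are actually needed in each part.

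For part (a), the inclusion $M \subset A(E)$ is immediate since every $x \in M = eU$ satisfies $ex = x$, so $x \sim_E ex$ by reflexivity. For the closure $A(E) + A(E) \subset A(E)$, I would invoke the trichotomy for addition in a supertropical semiring:
$$ x + y \;=\; \begin{cases} x & \text{if } ex > ey, \\ y & \text{if } ex < ey, \\ ex & \text{if } ex = ey. \end{cases} $$
Given $x, y \in A(E)$, in each of the three cases $x+y$ is one of $x$, $y$, or $ex$, while $e(x+y)$ is correspondingly $ex$, $ey$, or $ex$. The desired relation $x+y \sim_E e(x+y)$ then follows either by reflexivity or directly from the hypotheses $x \sim_E ex$ and $y \sim_E ey$. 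No property of $E$ beyond being an equivalence is used here.

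For part (b), the inclusion from left to right is trivial by taking $z = ex \in M$. For the converse, assume $x \sim_E z$ with $z \in M$. Ghost compatibility (rule \AEi) gives $ex \sim_E ez$; since $z \in M$ one has $ez = z$, so transitivity yields $x \sim_E z = ez \sim_E ex$, placing $x$ in $A(E)$.

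For part (c), closure under addition and the containment $0 \in M \subset A(E)$ are already provided by (a), so it remains only to check absorption under multiplication by $U$. Given $x \in A(E)$ and $u \in U$, multiplicativity of $E$ applied to $x \sim_E ex$ yields $ux \sim_E u(ex) = e(ux)$, hence $ux \in A(E)$. Thus $A(E)$ is an ideal of $U$. No serious obstacle arises; the only point requiring care is the case analysis in (a), since at that stage one has no compatibility hypothesis on $E$ with addition and must lean entirely on the explicit supertropical addition rule.
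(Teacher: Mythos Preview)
Your proof is correct and follows essentially the same approach as the paper's own proof. The only cosmetic difference is that in part~(a) the paper assumes without loss of generality that $ex\le ey$ and treats two cases, whereas you write out all three cases of the supertropical addition trichotomy; parts~(b) and~(c) are argued identically.
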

\begin{proof}

a): It is trivial that $M\subset A(E).$ Let $x,y\in A(E)$ be given
with $ex\le ey$ (without loss of generality). If $ex<ey,$ then
$x+y=y\in A(E).$ If $ex=ey,$ then $x+y=ey\in M\subset A(E).$
\pSkip

b): Assume that $x\in U,$ $z\in M,$ and $x\sim_Ez.$ Then
$ex\sim_Eez=z,$ since $E$ is ghost compatible. It follows that
$x\sim_Eex.$ \pSkip

c): If $x\sim_E ex,$ then $zx\sim_Eezx$ for every $z\in U,$ since
$E$ is multiplicative. Thus $U\cdot A(E)\subset  A(E).$ It follows
by a) that $A(E)$ is an ideal of $U.$
\end{proof}

\begin{remark}\label{rem6.9} If $E$ is additive, then, using the data from Theorem 6.6$'$,
we have
$$A(E)=\{x\in U\ds | ex\notin L\}\cup \bigcup_{a\in L}\{x\in
U_a\ds | x\sim_{E_{a}}a\}.$$

\end{remark}

\begin{thm}\label{thm6.10}
Assume that $E$ is an additive  equivalence relation on $U$ with
the data $$\Phi:=E | M,\quad  \mfA:= A(E),\quad L:=L(\Phi), \quad
E_a:=E | U_a$$ for $a\in L.$ The following are equivalent:
\begin{enumerate} \item[a)] $E$ is multiplicative (hence homomorphic).
\item[b)] $\Phi$ is multiplicative. $\mfA$ is an ideal of
$U.$ For any $a\in L,$\ $x,y\in U\setminus \mfA$ with $ex=ey=a$,
and $z\in U$ with $za\in L:$ $$x\sim_{E_a}y  \ \Rightarrow \
zx\sim_{E_a}zy.$$
\end{enumerate}
\end{thm}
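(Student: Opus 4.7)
The plan is to prove the two implications of the theorem separately; the forward direction (a)$\Rightarrow$(b) is essentially a formality, while (b)$\Rightarrow$(a) requires a careful walk through the case analysis of Theorem~6.6$'$.

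For (a)$\Rightarrow$(b): multiplicativity of $\Phi = E\mid M$ follows immediately from multiplicativity of $E$, since $M$ is closed under multiplication. \lemref{lem6.8}(c) asserts that $\mfA = A(E)$ is an ideal. For the third clause, if $x,y\in U\setminus\mfA$ satisfy $ex=ey=a\in L$ with $x\sim_{E_a}y$, then $x\sim_E y$; multiplicativity yields $xz\sim_E yz$, and since $e(xz)=e(yz)=za$, this relation is precisely $xz\sim_{E_{za}}yz$ (the statement in (b) is understood to refer to the $E_\bullet$-relation on the relevant fiber $U_{za}$).

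For (b)$\Rightarrow$(a), I assume the three conditions in (b) and take $x\sim_E y$ with $ex\le ey$ together with an arbitrary $z\in U$, aiming to establish $xz\sim_E yz$. I split according to the three cases of Theorem~6.6$'$. The decisive observation is that in Cases~(1) and~(2) both $x$ and $y$ already lie in $\mfA$. Indeed, in Case~(1), where $ex\notin L$, also $ey\notin L$ (if $ey\in L$ then $ey$ would be the smallest element of $[ex]_\Phi$, forcing $ex=ey\in L$); applying Case~(1) of Theorem~6.6$'$ to the pairs $(x,ex)$ and $(y,ey)$ gives $x\sim_E ex$ and $y\sim_E ey$. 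In Case~(2), the hypothesis $x\sim_{E_a}ex$ places $x\in\mfA$, and then transitivity together with $ex\sim_\Phi ey$ forces $y\sim_E ey$. Once $x,y\in\mfA$, the ideal hypothesis yields $xz,yz\in\mfA$, while multiplicativity of $\Phi$ gives $e(xz)\sim_\Phi e(yz)$, and I conclude $xz\sim_E e(xz)\sim_E e(yz)\sim_E yz$.

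In Case~(3), namely $ex=ey=a\in L$ with $x\sim_{E_a}y$, the set $\mfA\cap U_a$ is a single $E_a$-class (the one containing $a$), so either both $x,y$ lie in $\mfA$, reducing to the previous argument, or both lie outside $\mfA$. In the latter situation let $c:=za$: if $c\notin L$ then Case~(1) of Theorem~6.6$'$ applied to $(xz,yz)$ yields $xz\sim_E yz$ trivially, since the ghost companions coincide and lie outside $L$; if $c\in L$, the third clause of (b) applies verbatim and delivers $xz\sim_E yz$ inside $U_c$. The main obstacle of the proof is organising the case analysis so that Cases~(1) and~(2) of Theorem~6.6$'$ are reducible to the ideal behaviour of $\mfA$, leaving only the genuinely subtle Case~(3)---where $\mfA$ offers no direct help---to be handled by the explicit compatibility condition imposed on the $E_a$.
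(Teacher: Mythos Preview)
Your proof is correct and follows essentially the same strategy as the paper's. The only organizational difference is that the paper splits directly into the two cases ``$x\in\mfA$ or $y\in\mfA$'' versus ``$x\notin\mfA$ and $y\notin\mfA$'' (using \lemref{lem6.8}.b to see that $\mfA$ is a union of $E$-classes, so one of $x,y$ in $\mfA$ forces both), whereas you route through the three cases of Theorem~6.6$'$ and then observe that Cases~(1) and~(2) land you in $\mfA$; your Case~(3) subcase analysis (``both in $\mfA$'' versus ``both outside, then $za\in L$ or not'') matches the paper's Case~2 verbatim.
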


\begin{proof} a) $\Rightarrow$ b): evident.

b) $\Rightarrow$ a): Let $x,y,z\in U$ be given with $x\sim_Ey.$ We
have to verify that $xz\sim_Eyz.$ Since $E$ is ghost compatible
and $\Phi$ is multiplicative, $exz\sim_E eyz.$

\textit{Case} 1. \ $x\in\mfA  $ or $y\in\mfA  $. Due to
\lemref{lem6.8}.b, the set $\mfA  =A(E)$ is a union of equivalence
classes of $E.$ Thus both $x$ and $y$ are in $\mfA  .$ Since $\mfA
$ is assumed to be an ideal, $zx$ and $zy$ are in $\mfA  ,$ and
then
$$zx\sim_E ezx\sim_E ezy\sim_E zy.$$

\textit{Case} 2. \ $x\notin\mfA  $ and $y\notin\mfA  $. Now $ex\in
L$, $ ey\in L.$ Since $ex\sim_{\Phi} ey,$ it follows that $ex=ey=:
a\in L.$ Thus $x\sim_{E_a} y.$ If $za\notin L,$ then $zx$ and $zy$
are in $\mfA  ,$ and we conclude as above that $zx\sim_E zy.$ If
$za\in L,$ we conclude from $x\sim_{E_a}y$ by assumption b) that
$zx\sim_{E_{za}}zy.$

Thus, $zx\sim_Ezy$ in all cases.
\end{proof}

We introduce a special class of ghost-compatible equivalence
relations, and then will identify  the homomorphic relations among
these.

\begin{defn}\label{defn6.11} Let $\Phi$ be an equivalence
relation on the set $M,$ and let $\mfA  $ be a subset of $U$
containing $M.$ We define an equivalence relation $E:=E(U,\mfA
,\Phi)$ on $U$ as follows:
$$
\begin{array}{lll}
x_1\sim_E x_2  & \Leftrightarrow &  \text{Either}\ x_1=x_2,
\\ & &  \text{or} \ x_1\in\mfA  ,\ x_2\in\mfA,  \ \text{and} \ \
ex_1\sim_\Phi ex_2.\end{array}
$$ \hfill\quad \qed
\end{defn}

The equivalence classes of $E=E(U,\mfA  ,\Phi)$ are the sets
$\{x\in\mfA  \ds | ex\in\xi\},$ with $\xi$ running through
$M/\Phi,$ and the one point sets $\{x\}$ with $x\in
U\setminus\mfA  .$ Clearly $E$ is ghost compatible and $E|
M=\Phi.$

There is a structural characterization of $E(U,\mfA  ,\Phi).$

\begin{prop}\label{prop6.12} Let $E:=E(U,\mfA  ,\Phi)$ with
$\Phi$ an equivalence relation on $M$ and $\mfA  $ a subset of $U$
containing $M.$
\begin{enumerate}\item[i)] $A(E)=\mfA  .$
\item[ii)] $E$ is the finest ghost compatible equivalence relation
on $U$ with $E |M\supset\Phi$ and \\ $A(E)\supset\mfA  .$
\end{enumerate}
\end{prop}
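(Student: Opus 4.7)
\medskip
\noindent\emph{Proof sketch of Proposition \ref{prop6.12}.} The plan is to verify both claims directly from the explicit description of the equivalence classes of $E=E(U,\mfA,\Phi)$ given just below Definition \ref{defn6.11}. No serious obstacle is expected; the argument is essentially bookkeeping, using only that $M \subset \mfA$ and that $\Phi$ is an equivalence relation on $M$.

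For (i), to prove $\mfA \subset A(E)$ I would fix $x \in \mfA$ and observe that $ex \in M \subset \mfA$, while $e(x) = e(ex)$ gives $ex \sim_\Phi e(ex)$ trivially; by the defining clause of $\sim_E$, this means $x \sim_E ex$, i.e., $x \in A(E)$. Conversely, if $x \in A(E)$ and $x \neq ex$, then the second clause in Definition \ref{defn6.11} forces $x \in \mfA$; and if $x = ex$ then $x \in M \subset \mfA$ anyway. Hence $A(E) \subset \mfA$.

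For (ii), I would first check that $E$ itself satisfies the three requirements. Ghost compatibility: assume $x \sim_E y$ with $x \neq y$; then $x,y \in \mfA$ and $ex \sim_\Phi ey$, and since $ex,ey \in M \subset \mfA$ and $e(ex) = ex$, $e(ey) = ey$, we get $ex \sim_E ey$. The inclusion $\Phi \subset E|M$ is immediate from the definition (for $x,y \in M$, $ex = x$, $ey = y$), and $A(E) \supset \mfA$ is already part~(i).

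For the minimality assertion, let $E'$ be any ghost-compatible equivalence relation with $E'|M \supset \Phi$ and $A(E') \supset \mfA$; I claim $E \subset E'$. Indeed, suppose $x \sim_E y$ with $x \neq y$. Then $x,y \in \mfA \subset A(E')$, so $x \sim_{E'} ex$ and $y \sim_{E'} ey$ by the very definition of $A(E')$. Moreover $ex \sim_\Phi ey$, and since $\Phi \subset E'|M$, we get $ex \sim_{E'} ey$. Chaining these three $E'$-equivalences by transitivity yields $x \sim_{E'} y$, which completes the proof. \hfill\qed
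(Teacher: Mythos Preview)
Your proof is correct and follows essentially the same approach as the paper's: both verify part~(i) by a direct case split on whether $x\in\mfA$ (equivalently, whether $x=ex$), and both establish the minimality in part~(ii) by the chain $x\sim_{E'}ex\sim_{E'}ey\sim_{E'}y$ using $\mfA\subset A(E')$ and $\Phi\subset E'|M$. Your write-up is slightly more explicit in checking that $E$ itself is ghost compatible and satisfies $E|M\supset\Phi$, which the paper handles in the text preceding the proposition rather than in the proof proper.
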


\begin{proof} i): If $x\in\mfA  ,$ then clearly $x\sim_E ex.$
But, if $x\notin \mfA  ,$ then $x\ne ex,$ and hence
$x\not\sim_Eex.$ \pSkip

ii): Let $F$ be a ghost compatible equivalence relation on $U$
with $F|M\supset\Phi$ and $A(F)\supset\mfA  .$ Let $x\in U$ be
given. We verify that $[x]_E\subset [x]_F.$

\textit{Case} 1. \ $x\notin\mfA  .$ Now $[x]_E=\{x\}\subset
[x]_F.$

\textit{Case} 2. \ $x\in\mfA  .$ Let $y\sim_Ex.$ Then $y\in\mfA  $
and $ex\sim_\Phi ex.$ Thus, $x\sim_Fex,$ $y\sim_F ey,$ $ex\sim_F
ey.$ We conclude that $y\sim_Fx.$ Thus again $[x]_E\subset [x]_F.$
\end{proof}

\begin{thm}\label{thm6.13}  Let again $E:=E(U,\mfA  ,\Phi)$
with $\Phi$ an equivalence relation on $U$ and $\mfA  $ a subset
of $U$ containing $M.$
\begin{enumerate}\item[i)] $E$ is multiplicative iff $\Phi$ is
multiplicative and $\mfA  $ is an ideal of $U.$ \item[ii)] $E$ is
additive, iff $\Phi$ is order compatible and $\mfA  $ contains
every $x\in U$ with $ex\notin L(\Phi).$ \item[iii)] Thus, $E$ is
homomorphic, iff $\Phi$ is homomorphic and $\mfA  $ is an ideal
containing\linebreak $\nu_U^{-1}(M\setminus L(\Phi)).$
\end{enumerate}
\end{thm}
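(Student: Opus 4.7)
The plan is to treat the three parts in sequence, with (iii) following immediately from (i) and (ii). Recall that the non-singleton $E$-classes are exactly the sets $\{x\in\mfA \ds | ex\in\xi\}$ for $\xi\in M/\Phi$, which makes all membership and compatibility questions transparent.

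For part (i), the forward direction is immediate from earlier structural results: $\Phi = E|M$ inherits multiplicativity from $E$, and by Proposition 6.12(i) we have $\mfA = A(E)$, so $\mfA$ is an ideal by Lemma 6.8(c). Conversely, assume $\Phi$ is multiplicative and $\mfA$ is an ideal. Given $x_1 \sim_E x_2$ with $x_1 \ne x_2$ and any $z \in U$, the definition of $E$ forces $x_1, x_2 \in \mfA$ and $ex_1 \sim_\Phi ex_2$; then $x_1 z, x_2 z \in \mfA$ since $\mfA$ is an ideal, and multiplicativity of $\Phi$ gives $e(x_1 z) = (ex_1)(ez) \sim_\Phi (ex_2)(ez) = e(x_2 z)$, so $x_1 z \sim_E x_2 z$.

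Part (ii) is the main work and proceeds via the additivity axioms $\AEi$--$\AEiii$ of Proposition 6.3. For the forward direction, $\AEii$ is precisely that $\Phi$ is order compatible. To show $\nu_U^{-1}(M \smin L(\Phi)) \subset \mfA$, take $x$ with $ex \notin L(\Phi)$: if $ex = 0$ then $x = 0 \in \mfA$; otherwise pick $y \in M$ with $y < ex$ and $y \sim_\Phi ex$, and apply $\AEiii$ to the pair $(y, x)$, whose hypotheses $ey = y < ex$ and $y \sim_E ex$ both hold. The conclusion $y \sim_E x$, combined with $y \ne x$ (automatic if $x$ is tangible; if $x \in M$ then $x \in \mfA$ already), forces $x \in \mfA$ by the defining clause of $E$. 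For the converse, assume $\Phi$ is order compatible and $\mfA \supset \nu_U^{-1}(M \smin L(\Phi))$, and verify $\AEi$--$\AEiii$ directly. Axioms $\AEi$ and $\AEii$ are immediate from the definition of $E$ and from $E|M = \Phi$. The substantive axiom is $\AEiii$: if $ex < ey$ and $ex \sim_E ey$, then $ex \sim_\Phi ey$ with $ex < ey$, so $ey$ is not minimal in its $\Phi$-class and thus $ey \in M \smin L(\Phi)$; the hypothesis on $\mfA$ then yields $y \in \mfA$, while $ex \in M \subset \mfA$ and $e(ex) = ex \sim_\Phi ey = e(y)$, so $ex \sim_E y$.

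Part (iii) is the synthesis: by Proposition 4.2, an equivalence relation on the bipotent semiring $M$ is homomorphic precisely when it is multiplicative and order compatible, so combining (i) and (ii) gives the stated characterization. The principal obstacle throughout is the bookkeeping in part (ii): identifying $\nu_U^{-1}(M \smin L(\Phi))$ as the minimal saturation condition on $\mfA$ for additivity, which is pinpointed exactly by when axiom $\AEiii$ becomes non-vacuous and therefore forces tangible elements to be joined to their ghost companions under $E$.
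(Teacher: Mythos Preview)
Your proof is correct and follows essentially the same approach as the paper: both directions of (i) use Proposition~6.12(i) and Lemma~6.8(c) together with a direct verification, and (ii) is handled in both directions via the criterion of Proposition~6.3 by checking the axioms $\AEi$--$\AEiii$. Your forward direction of (ii) is in fact slightly more explicit than the paper's, which simply cites AE3 for the containment $\nu_U^{-1}(M\setminus L(\Phi))\subset\mfA$ without spelling out the argument.
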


\begin{proof} a) We know that $$\mfA  =A(E)=\{x\in U\ds |
\exists z\in M: x\sim_Ez\},$$ and that $\mfA  +\mfA  \subset \mfA
.$ \pSkip

b) If $E$ is multiplicative, then, of course, $\Phi$ is
multiplicative, and $\mfA  $ is an ideal by \lemref{lem6.8}.c. If
$E$ is additive, then $\Phi$ is additive, which means that $\Phi$
is order compatible. Also then $\mfA  $ contains every $x\in U$
with $ex\notin L(\Phi)$ by Property AE3 in \propref{prop6.3}. If
$E$ is homomorphic, then all these properties hold. \pSkip

c) Assume now that $\Phi$ is multiplicative, and $\mfA  $ is an
ideal of $U.$ We want to prove that $E$ is multiplicative. Let
$x,y,z\in U$ be given with $x\sim_Ey.$ We want to verify that
$xz\sim_Eyz.$ If $x\in\mfA  ,$ then $y\in\mfA  $ and $ex\sim_\Phi
ey;$ hence, $exz\sim_\Phi eyz$. Since $xz,yz\in\mfA  $, we
conclude that $xz\sim_E yz.$ If $x\notin\mfA  , $ then $x=y$, and
hence $xz=yz.$ \pSkip

d) Assume that $\Phi$ is order compatible and $x\in\mfA  $ for
every $x\in U$ with $ex\notin L(\Phi).$ We want to prove that $E$
is additive, and we use the criterion of \propref{prop6.3} for
this. Clearly, $E$ obeys the axioms AE1 and AE2 there. It remains
to check AE3. Let $x,y\in U$ be given with $ex<ey$ and
$ex\sim_Eey,$ i.e., $ex\sim_\Phi ey.$ Then $ey\notin L(\Phi).$ By
our assumption on $\mfA  =A(E)$ it follows that $y\in\mfA  , $
i.e., $y\sim_E ey.$ We conclude that $ex\sim_Ey,$ as desired. Thus
$E$ is indeed additive. \pSkip

e) We have proved claims i) and ii) of the theorem. They implies
iii).
\end{proof}

We discuss the special case that $\Phi$ is the diagonal of $M$,
$\Phi=\diag M.$ In other words,\linebreak $x\sim_\Phi y$ iff
$x=y.$ We write more briefly $E(U,\mfA  )$ for $E(U,\mfA,\diag
M).$ Repeating Definition~\ref{defn6.11} in this case we have

\begin{defn}\label{defn6.14}
Let $\mfA  $ be any ideal of the supertropical semiring $U$
containing the ghost ideal $M$ of $U.$ The equivalence relation
$E:=E(U,\mfA  )$ on $U$ is defined as follows: Let $x,y\in U.$

\qquad If $x\notin \mfA  :\ x\sim_Ey\Leftrightarrow x=y.$

\qquad If $x\in \mfA  :\ x\sim_Ey\Leftrightarrow y\in\mfA  ,\
ex=ey.$ \hfill\quad \qed \end{defn}

Clearly $L(\diag M)=M\setminus\{0\}.$ Thus, \thmref{thm6.13} tells
us that the equivalence relation $E(U,\mfA  )$ is homomorphic.
This also follows from \cite[\S6]{IKR1}, since $E(U,\mfA  )$ is
obviously an MFCE-relation.

Thus, the set $U/E$ with $E:=E(U,\mfA  )$ is a supertropical
semiring, the addition and multiplication being given by $(x,y\in
U):$
$$[x]_E+[y]_E:=[x+y]_E,\quad [x]_E\cdot[y]_E:=[xy]_E.$$

Every equivalence class $[x]_E$ of $E$ contains a unique element
of the set $$V:=(U\setminus\mfA  )\cup M,$$ namely, the element
$x$, for $x\notin \mfA  ,$ and the element $ex,$ for $x\in\mfA  .$
Notice that $V$ is closed under addition (Remark \ref{rem6.9}.b).

Identifying the set $U/E$ of equivalence classes of $E$ with the
set  of representatives $V$, we arrive at the following theorem.

\begin{thm}\label{thm6.15} Let $\mfA  $ be an ideal of $U$
containing $M$ and $V:=(U\setminus\mfA  )\cup M.$ On $V$ we define
an addition + and multiplication $\odot$ as follows:
$$x+y\ \text{is the sum of}\ x\ \text{and}\ y\ \text{in}\ U.$$
$$x\odot y:=\begin{cases}xy & \ \text{if}\quad {xy}\notin\mfA  ,\\
exy & \ \text{if}\quad xy\in\mfA  .\end{cases}$$ Then
$V=(V,+,\odot)$ is a supertropical semiring, and the map $\al :
U\to V$ with $\al (x)=x$ for $x\in U\setminus\mfA  ,$ $\al (x)=ex$
for $x\in\mfA  $ is a surjective semiring homomorphism. It gives
the equivalence relation $E(\al )=E(U,\mfA  ).$
\end{thm}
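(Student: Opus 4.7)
The plan is to recognize that $V$ serves as a canonical system of representatives for the equivalence classes of $E := E(U,\mfA)$, so that the theorem amounts to transporting the quotient semiring structure $U/E$ onto $V$ and identifying the transported operations with $+$ and $\odot$. First I would invoke Theorem \ref{thm6.13}.iii with $\Phi := \diag M$: this $\Phi$ is trivially homomorphic, $L(\diag M) = M \smin \zset$, and hence $\nu_U^{-1}(M \smin L(\Phi)) = \zset \subset \mfA$. Thus $E(U,\mfA) = E(U,\mfA,\diag M)$ is a homomorphic equivalence relation, so by Theorem 4.3a the quotient $U/E$ is a supertropical semiring and $\pi_E : U \to U/E$ is a surjective semiring homomorphism.

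Next I would check that the composite $U \xrightarrow{\pi_E} U/E$ followed by choosing the unique $V$-representative of each class is well-defined and agrees with the map $\al$ of the statement. From Definition \ref{defn6.14}, the class $[x]_E$ of an $x \in U \smin \mfA$ is $\{x\}$, which meets $V$ only in $x$. The class of $x \in \mfA$ is $\{y \in \mfA \ds | ey=ex\}$; since $V \cap \mfA = M \cap \mfA = M$, the only member of this class in $V$ is $ex$. Hence $V \to U/E$, $v \mapsto [v]_E$, is a bijection, and $\al$ is precisely $\pi_E$ composed with its inverse.

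Having transported the supertropical semiring structure of $U/E$ to $V$ via this bijection, $\al$ automatically becomes a surjective semiring homomorphism with $E(\al) = E(U,\mfA)$. The real (small) work is then to check that the transported $+$ and $\cdot$ are the $+$ and $\odot$ of the statement. For addition, I would verify that $V$ is closed under the addition inherited from $U$ by case analysis: if $x,y \in V$, then either $ex \neq ey$, in which case $x+y \in \{x,y\} \subset V$, or $ex = ey$, in which case $x+y = ex \in M \subset V$. Once $V$ is closed under the $U$-sum, the transported sum of $x,y \in V$ is just $x+y$ computed in $U$, since $x+y$ is already the unique $V$-representative of $[x+y]_E$. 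For multiplication, the transported product of $x,y \in V$ is the unique $V$-representative of $[xy]_E$: if $xy \notin \mfA$ the class is $\{xy\}$ with representative $xy$, while if $xy \in \mfA$ the representative is $e(xy) = exy$, matching the defining formula for $\odot$.

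The main potential obstacle is the addition closure step — making sure no case produces something outside $V$ — but this is immediate from the rule for addition in a supertropical semiring recalled in the Introduction. Everything else is bookkeeping around the bijection $V \leftrightarrow U/E$.
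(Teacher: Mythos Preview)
Your proposal is correct and follows essentially the same route as the paper: the paper's argument (given in the paragraph preceding the theorem) also invokes \thmref{thm6.13} with $\Phi=\diag M$ to see that $E(U,\mfA)$ is homomorphic, observes that each $E$-class meets $V$ in exactly one point, notes that $V$ is closed under the addition of $U$, and then identifies $U/E$ with $V$ via this system of representatives. Your explicit case checks for the transported addition and multiplication are exactly the ``direct straightforward'' verification the paper alludes to after the statement.
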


Of course, this can also be verified in a direct straightforward
way.

\begin{remarks}\label{rem6.16} $ $
\begin{enumerate} \eroman
    \item The sub-semiring $M$ of $U$ is also a sub-semiring of $V$ (in its
given semiring structure). In particular, $e_U=e_V.$

    \item
 $M$ is also the ghost ideal of $V,$ and the ghost map $\nu_V$ is
the restriction of $\nu_U$ to $V.$

\item We have $1_U=1_V$ if $1_U\notin \mfA  ,$ and $1_M=1_V$
if $1_U\in\mfA  .$ In the latter case $V=M.$
\end{enumerate}

\end{remarks}

\begin{example}\label{examp6.17}
Let $L$ be a subset of $M$ with $M\setminus L$ an ideal of $M.$
Define
$$\mfA  =\mfA  _L:=\{x\in U\ds | ex\in M\setminus
L\}\cup M=\nu_U^{-1}(M\setminus L)\cup L.$$ Then $\mfA  $ is an
ideal of $U$ containing $M.$ It is easily checked that $E(U,\mfA
)$ is the equivalence relation on $U$ which we considered in
\cite[Example  6.13]{IKR1}. We have $$V=(M\setminus
L)\cup\nu_U^{-1}(L).$$ If $L\cdot L\subset L,$ then $V\cdot
V\subset V;$ hence, the supertropical semiring $V$ is a
sub-semiring of $U,$. This is the case considered in \cite[Example
6.12]{IKR1}.

\end{example}

\begin{defn}\label{defn6.18}
We call an equivalence relation $E$ on $U$ \bfem{strictly ghost
separating} if no $x\in\mathcal T(U)$ is $E$-equivalent to an
element $y$ of $M.$ Under the very mild assumption that~$E$ is
ghost compatible, this means that $A(E)=M$ (cf.
\lemref{lem6.8}.b).\footnote{We reserve the label ``ghost
separating'' for a slightly broader class of equivalence relations
to be introduced in \cite{IKRMon}.}
\end{defn}

The restriction of $E(U,\mfA  )$ to the supertropical semiring
$V=(U\setminus\mfA  )\cup M$ from above is always ghost
separating. Moreover, we have the following facts.

\begin{prop}\label{prop6.19}
Assume that $F$ is a multiplicative equivalence relation (and
hence~$A(F)$ is an ideal of $U$), and $\mfA  $ is an ideal of $U$
with $M\subset \mfA  \subset A(F).$
\begin{enumerate}\item[i)] $E(U,\mfA  )\subset F.$
\item[ii)] The equivalence relation $\olF : =F/E(U,\mfA )$ on $\olU
 :=U/E(U,\mfA  )$ is again multiplicative, and
$A(\olF )$ is the image of $A(F)$ in $\olU  ,$ i.e., $A(\olF
)=A(F)/E(U,\mfA  ).$ \item[iii)] $\olF $ is strictly ghost
separating iff $\mfA  =A(F).$
\item[iv)] If we identify $\olU  $ with the semiring
$V:=(U\setminus \mfA  )\cup M,$ as explicated above, then $\olF
=F|V.$ \item[v)] $\olF $ is transmissive iff $F$ is transmissive.
\item[vi)] $\olF $ is homomorphic iff $F$ is
homomorphic.\end{enumerate}
\end{prop}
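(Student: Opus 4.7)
The plan is to exploit the explicit description of $E := E(U,\mfA)$: its non-trivial equivalence classes are the sets $\{x \in \mfA \ds | ex = c\}$ with $c \in M$, while elements of $U \setminus \mfA$ form singleton classes. Throughout I identify $\olU = U/E$ with $V = (U \setminus \mfA) \cup M$ as in \thmref{thm6.15}, so that $e_{\olU} = e_U$, the ghost ideal of $\olU$ is again $M$, and the natural map $\pi_E : U \to \olU$ corresponds to the surjective homomorphism $\al: U \to V$ of that theorem.

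For (i), only non-trivial equivalences need checking: if $x,y \in \mfA$ with $ex = ey$, then the hypothesis $\mfA \subset A(F)$ gives $x \sim_F ex = ey \sim_F y$, so $x \sim_F y$. For (ii), the factorization $\pi_F = \pi_{\olF} \circ \pi_E$ together with the fact that $\pi_E$ is a surjective homomorphism and $F$ is multiplicative forces $\olF$ to be multiplicative. To compute $A(\olF)$, note that $[x]_E \in A(\olF)$ iff $[x]_E \sim_{\olF} [ex]_E$ iff $x \sim_F ex$ iff $x \in A(F)$; thus $A(\olF) = A(F)/E$, which by $\mfA \subset A(F)$ is indeed the image of $A(F)$ in $\olU$.

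For (iii), $\olF$ is strictly ghost separating iff $A(\olF)$ equals the ghost ideal $M$ of $\olU$. Under our identification this reads $(A(F) \setminus \mfA) \cup M = M$, i.e., $A(F) \subset \mfA$. Combined with the standing hypothesis $\mfA \subset A(F)$, this gives $\mfA = A(F)$. Part (iv) is then almost tautological: for $x,y \in V$ the classes $[x]_E, [y]_E$ are distinct unless $x = y$, so the induced relation $\olF$ on $V$ satisfies $x \sim_{\olF} y \Leftrightarrow x \sim_F y$, which is precisely $F|V$.

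For (v) and (vi), the factorization $\pi_F = \pi_{\olF} \circ \pi_E$ does all the work. Since $\pi_E$ is a surjective semiring homomorphism (hence in particular a surjective transmission), one direction is immediate: if $\olF$ is transmissive (resp.\ homomorphic), then $\pi_F$ is a composition of transmissions (resp.\ homomorphisms), hence transmissive (resp.\ homomorphic). Conversely, if $F$ is transmissive, then by \cite[Proposition 6.1.ii]{IKR1}, surjectivity of $\pi_E$ implies transmissivity of $\pi_{\olF}$; the analogous statement for homomorphisms is a direct verification, since additivity and multiplicativity of $\olF$ transfer from $F$ along the surjective homomorphism $\pi_E$. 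The only genuinely delicate step is the bookkeeping in (iii), where the identification $\olU \cong V$ must be used carefully to locate both $A(\olF)$ and the ghost ideal of $\olU$ inside~$V$.
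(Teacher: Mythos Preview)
Your proof is correct and follows essentially the same approach as the paper's own proof. The paper dispatches parts ii)--iv) in a single sentence (``fairly obvious'') and handles v) via the same factorization $\pi_F=\pi_{\olF}\circ\pi_E$ together with \cite[Corollary~6.2]{IKR1}; you spell out the details for ii)--iv) more carefully, but the underlying ideas and tools are identical.
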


\begin{proof} Let $E:=E(U, \mfA  ).$

a) We claim that for any $x,y\in U$ with $x\sim_Ey$ also
$x\sim_Fy.$ Now, if $x\notin \mfA  ,$ then $x=y.$ If $x\in\mfA  ,$
then $y\in\mfA  $ and $ex=ey.$ Since $\mfA  \subset A(F),$ it
follows that $x\sim_F ex,$ $y\sim_F ey,$ and then that $x\sim_Fy.$
Thus $x\sim_Fy$ in both cases. This proves  $E\subset F.$ \pSkip

b) Claims ii) -- iv) of the proposition are fairly obvious. v)
follows from \cite[Corollary~6.2]{IKR1} since
$\pi_F=\pi_{\olF}\circ \pi_E$,  and $\pi_E$ is a surjective
homomorphism. vi) is again obvious.
\end{proof}

We now exhibit a case where we have met the equivalence relation
$E(U,\mfA ,\Phi)$ before. First a very general observation.

\begin{remark}\label{rmk:6.21} Every $\mfa$ of $U$  with $e \cdot \mfa  \subset \mfa$ is closed under
addition. The reason is, that for any $x,y \in U$ the sum $x+y$ is
either  $x$ or $y$ or $ex$. Thus every subset $\mfa$ of $U$ with
$U \cdot \mfa \subset \mfa$ (i.e., $\mfa$ a monoid ideal of $U$)
is an ideal of $U$. If $\mfa$ and  $\mfb$ are ideals of $U$ then
$\mfa \cup \mfb = \mfa + \mfb$.
\end{remark}

Assume that $\Phi$ is a homomorphic equivalence relation on $M$.
It gives us the homomorphism $\pi_\Phi$ from $M$ to the bipotent
semiring $M / \Phi$. We define
$$ \mfa_\Phi := \{ x \in U \ | \  ex \sim_\Phi 0\} $$
which is an ideal on $U$, and define
$$ \mfA := M \cup \mfa_\Phi = M + \mfa_\Phi,$$
which is an ideal of $U$ containing $M$. It is the set of all
$x\in U$ with $x =ex$ or $ex \sim_\Phi 0$. If necessary we more
precisely write $\mfa_{U,\Phi}$, $\mfA_{U,\Phi}$ instead of
$\mfa_{\Phi}$, $\mfA_{\Phi}$. Starting from
Definition~\ref{defn6.11} it can be checked in a straightforward
way that the multiplicative equivalence relation
$$ E:= E(U, \mfA_{\Phi}, \Phi)$$
has the following description ($x,y \in U$):
$$\begin{array}{lll}
    x \sim_E y & \Leftrightarrow & \text{either } x =y \\
&& \text{or } x= ex, \  y= ey, \  ex \sim_\Phi ey\\
&& \text{or } ex \sim_\Phi ey  \sim_\Phi 0.\\
  \end{array}
$$
Thus $E$ is the equivalence relation $F(U,\gm)$ defined in Example
\ref{examp4.8} with $\gm:= \pi_\Phi$, If $M / \Phi$ is
cancellative the we know from Theorem \ref{thm1.12} and Example
\ref{examp4.8} that $E(U, \mfA_{\Phi}, \Phi)$ is transmissive.
There are other cases where this also holds, cf. Remark
\ref{rmk:6.23} below.

We now apply Proposition \ref{prop6.19} to the relation $$F:= E(U,
\mfA \cup \mfa_{\Phi}, \Phi)$$ for $\mfA$ any ideal of $U$
containing $M$. Let $\olU$ denote the supertropical semiring $U/
E(U, \mfA)$, whose ghost ideal has been identified above with $M =
eU$. It again can be checked in a straightforward way that the
equivalence relation $F/ E(U, \mfA)$ on $\olU$ is just the
relation
$$ E(\olU, \mfA_{\olU,\Phi},\Phi) = F(\olU, \pi_\Phi),$$ in the
notation of Example \ref{examp4.8}. Thus we arrive at the
following result.
\begin{thm}\label{thm:6.23} Let $\Phi$ be a homomorphic equivalence
relation on $M := eU$ and $\mfA$ an ideal of $U$ which contains
$M$. Let $\mfD := \mfA \cup \mfa_\Phi = \mfA + \mfa_\Phi$ with
$\mfa_\Phi := \{ x\in U | ex \sim_\Phi 0\}$.
\begin{enumerate} \ealph
    \item Then $\olU := U/ E(U, \mfA)$ is a supertropical semiring
    (as we know for long) and  \\ $E(U, \mfD, \Phi) / E(U, \mfA)$ is
    the multiplicative equivalence relation $F(\olU,\pi_\Phi)$.

    \item $ E(U, \mfD, \Phi)$ is transmissive iff
    $F(\olU,\pi_\Phi)$ is transmissive.

    \item Tn particular $ E(U, \mfD, \Phi)$ is transmissive if $M/
    \Phi$ is cancellative.
\end{enumerate}
\end{thm}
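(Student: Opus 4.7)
The plan is to verify the three claims in order. Part (a) is an identification of quotient equivalence relations; part (b) then follows from \propref{prop6.19}, and part (c) from (b) together with the discussion in Example~\ref{examp4.8}.

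For (a), the fact that $\olU := U/E(U,\mfA)$ is supertropical is \thmref{thm6.15}. I would identify $\olU$ with $V := (U\smin\mfA)\cup M$ as in \thmref{thm6.15} and Remark~\ref{rem6.16}, so that the projection $\pi := \pi_{E(U,\mfA)} \colon U \to \olU$ is given by $\pi(x)=x$ for $x\in U\smin\mfA$ and $\pi(x)=ex$ for $x\in\mfA$, and consequently $e_{\olU}\pi(x)=ex$ for every $x\in U$. This immediately yields $\pi(\mfa_\Phi)=\mfa_{\olU,\Phi}$ and $\pi(\mfD)=\mfA_{\olU,\Phi}$. Using the explicit description of $E(U,\mfD,\Phi)$ recorded just before the theorem and the analogous description of $E(\olU,\mfA_{\olU,\Phi},\Phi)=F(\olU,\pi_\Phi)$ coming from Definition~\ref{defn6.11} and Example~\ref{examp4.8}, a routine case analysis on whether each $x_i$ lies in $\mfA$, in $\mfa_\Phi\smin\mfA$, or outside $\mfD$ gives
\[
x_1\sim_{E(U,\mfD,\Phi)} x_2 \ \Longleftrightarrow\ \pi(x_1)\sim_{F(\olU,\pi_\Phi)}\pi(x_2).
\]
Together with the easy inclusion $E(U,\mfA)\subset E(U,\mfD,\Phi)$ (immediate from $\mfA\subset\mfD$), this identifies the quotient $E(U,\mfD,\Phi)/E(U,\mfA)$ with $F(\olU,\pi_\Phi)$.

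For (b), set $F := E(U,\mfD,\Phi)$. By \propref{prop6.12}.i we have $A(F)=\mfD$, so $M\subset\mfA\subset A(F)$, and \propref{prop6.19} applies. Its part (v) then says that $F/E(U,\mfA)$ is transmissive iff $F$ is, which together with (a) yields (b). For (c), if $M/\Phi$ is cancellative then $\pi_\Phi\colon M\to M/\Phi$ is a surjective homomorphism onto a cancellative bipotent semidomain; the discussion in Example~\ref{examp4.8} (based on \thmref{thm1.12}) then asserts that $F(\olU,\pi_\Phi)$ is transmissive, and (b) gives the same for $E(U,\mfD,\Phi)$.

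The principal obstacle is the bookkeeping in part (a): one must match the two equivalence relations case-by-case after projecting by $\pi$, keeping straight the subcases $x_i\in\mfA$, $x_i\in\mfa_\Phi\smin\mfA$, and $x_i\notin\mfD$, and verifying in each that the conditions ``$x_i\in\mfD$ with $ex_1\sim_\Phi ex_2$'' on the $U$-side translate exactly into ``$\pi(x_i)\in\mfA_{\olU,\Phi}$ with $e_\olU\pi(x_1)\sim_\Phi e_\olU\pi(x_2)$'' on the $\olU$-side. Once this identification is in place, parts (b) and (c) are corollaries of results already established in \S\ref{sec:6} and \S\ref{sec:1}, respectively.
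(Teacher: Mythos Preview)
Your proposal is correct and follows essentially the same route as the paper. The paper proves the theorem in the paragraph immediately preceding its statement: it applies \propref{prop6.19} to $F=E(U,\mfA\cup\mfa_\Phi,\Phi)$, notes that ``it can be checked in a straightforward way'' that $F/E(U,\mfA)=E(\olU,\mfA_{\olU,\Phi},\Phi)=F(\olU,\pi_\Phi)$, and then invokes Example~\ref{examp4.8} for the cancellative case --- exactly your structure, with your part (a) spelling out the verification the paper leaves implicit.
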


\begin{remark}\label{rmk:6.23} Looking at Theorem \ref{thm6.13}
and Proposition \ref{prop6.19}.vi we can also state the following:
$ F(U, \mfD, \Phi)$  is homomorphic  iff $F(\olU,\pi_\Phi)$ is
homomorphic iff $\nu^{-1}_U(M\ L(\Phi)) \subset \mfD$.

\end{remark}

\begin{remark}\label{rmk:6.24}
The question might arise whether the $ E(U, \mfA, \Phi)$ is
transmissive for \textbf{any} ideal $\mfA \supset M$  of $U$ if,
say,  $M/ \Phi$ is cancellative. The answer in general is ``No'':
 If $E(U, \mfA, \Phi)$ is transmissive then $\mfA$ must contain
the ideal $\mfa_\Phi$. The reason is that for any transmission
$\al: U \to V$ and $x\in U $ with $\al(ex)=0$ we have $\al(x)=0$
since $\al(ex) = e\al(x)$.
\end{remark}


\end{document}